\documentclass[11pt]{amsart}

\usepackage{amssymb}
\usepackage{amsthm}
\usepackage{amsmath}
\usepackage{graphicx}
\usepackage{comment}
\usepackage[usenames,dvipsnames]{xcolor}
\usepackage[margin = 1in]{geometry}
\usepackage{enumerate}
\usepackage[toc,page]{appendix}
\usepackage{color}
%\linenumbers
\usepackage{hyperref}

\definecolor{mygreen}{rgb}{0.1,0.75,0.2}

 \newtheorem{thm}{Theorem}[section]
 \newtheorem{cor}[thm]{Corollary}
 \newtheorem{lem}[thm]{Lemma}
 \newtheorem{prop}[thm]{Proposition}

 \theoremstyle{definition}
 \newtheorem{defn}{Definition}

 \theoremstyle{remark}
 \newtheorem{rem}{Remark}

 \numberwithin{equation}{section}

\DeclareMathOperator{\ran}{Ran}
\DeclareMathOperator{\kerr}{Ker}

\newcommand{\la}{\langle}
\newcommand{\ra}{\rangle}
\newcommand{\pt}{\partial}
\newcommand{\eps}{\varepsilon}

\newcommand{\ud}{\,\mathrm{d}}
\newcommand{\8}{\infty}
\newcommand{\A}{\mathcal{A}}
\newcommand{\LL}{\mathcal{L}}
\newcommand{\E}{\mathcal{E}}
\newcommand{\F}{\mathcal{F}}
\newcommand{\Q}{\mathcal{Q}}
\newcommand{\ptf}{(-\pt_{xx})^{\frac{1}{2}}}

\DeclareMathOperator{\nul}{Null}

\begin{document}

\title[Long time behavior of PN model]{Long time behavior of dynamic solution to Peierls--Nabarro dislocation model}

\author{Yuan Gao}
\address{Department of Mathematics and Department of
  Physics\\Duke University,
  Durham NC 27708, USA}
\email{yg86@duke.edu}

\author{Jian-Guo Liu}
\address{Department of Mathematics and Department of
  Physics\\Duke University,
  Durham NC 27708, USA}
\email{jliu@math.duke.edu}

\date{\today}

\begin{abstract}
In this paper we study the relaxation process of  Peierls-Nabarro dislocation model, which is a gradient flow with singular nonlocal energy and double well potential  describing how  the materials relax to its equilibrium with the presence of a dislocation.   We prove the dynamic solution to Peierls-Nabarro model will converge exponentially to a shifted steady profile which is uniquely determined.

\end{abstract}

\maketitle

\section{Introduction}
\subsection*{Motivation and Problem. }
Materials defects such {as} dislocations   are important line defects in crystalline materials and they play essential roles in understanding materials properties like plastic deformation \cite{Mura, book}. For the single dislocation problem in materials science, Peierls-Nabarro (PN) model is used to describe the detailed structures in dislocation core, which is a multiscale continuum model that incorporates the {atomistic} effect by introducing a nonlinear nonconvex potential describing the  {atomistic} misfit interaction across the slip plane of the dislocation \cite{PN1, PN2}.

The simplest solvable nonlinear potential is introduced by Frenkel in 1926 to describe the misfit energy of the Halite \cite{Frenkel}.  Suppose $u$ is the displacement of materials.
Setting some physical constant to be $1$, under some symmetric assumption, the double well potential can be defined as
\begin{equation}
F(u):= \frac{1}{\pi^2}(1+\cos(\pi u))
\end{equation}
with
\begin{equation}
f(u):= F'(u)=-\frac{1}{\pi}\sin(\pi u),\, f'(u)=F''(u)=-\cos(\pi u).
\end{equation}
A more general version of $F$ is $F(u)=\frac{c}{\pi^2}(1+\cos(\frac{4\pi u}{b})).$
Due to the presence of dislocation on slip plane $\{x\in \mathbb{R}\}$, the total increment of displacement from $-\8$ to $+\8$ is $b$, the magnitude of the Burgers vector. Unlike the classical dislocation model \cite{Mura, book, Xiang}, which assumes a uniform increment of $u$ across slip plane, the increment of displacement $u$ at each position $x$ in PN model is not simply a step function but depends on the nonlinear misfit energy.
\begin{rem}
 In general  {a} Burgers vector, which indicates the magnitude and direction of the lattice distortion resulting from a dislocation, is defined by a loop integration $\textbf{b}:=\oint_L \ud u$ {with  a counterclockwise orientation enclosing the dislocation line}. If in two dimensions, we assume anti-symmetry with respect to  slip plane $\{x\in \mathbb{R}\}$, i.e. $u^+(x, 0^+)=-u^-(x,0^-)$. Due to Cauchy's integral formula, the loop integration is zero for  {the upper and lower half-spaces} separately.  Then by standard loop integration calculation, the loop integration is reduced to $x$-axis and is given by
 $2\int_{\alpha}^\beta u'(x) \ud x$, where $\alpha$ and $\beta$ are intersection points of the loop with $x$-axis. Therefore in PN model,  the distributional Burgers vector depends on the endpoints $\alpha,\,\beta$ we choose.
 %For a straight dislocation, we denote Burgers vector as ${\bf{b}}:= (-2\int_{\alpha}^\beta u_1'(x) \ud x, 0, 0) $ and call $-2 u_1'(x)$ the density for {the} Burgers vector $\bf{b}$.
However, since the total increment from $-\8$ to $+\8$ remains to be $b$ and we always assume equilibrium at far field, the magnitude of the Burgers vector  naturally gives the boundary condition of $u$ at far field, $u(\pm\8)=\pm \frac{b}{4}$.
\end{rem}

 To find out displacement $u$ at each position,
the equilibrium of PN model for single edge dislocation is obtained by minimizing the total energy, including elastic bulk energy $E_{els}$ and misfit interface energy $\int_{\mathbb{R}} F(u) \ud x$.  By the Dirichlet to Neumann map and elastic extension \cite{our}, the elastic bulk energy in upper/lower plane $E_{els}$ can be reduced equivalently to slip plane, which therefore becomes a nonlocal elastic energy on slip plane $\{x\in \mathbb{R}\}$, $\int_{\mathbb{R}} \frac{1}{2}|(-\pt_{xx})^{\frac{1}{4}} u|^2 \ud x$; see \eqref{tm1.4} below.

Denote $H^s(\mathbb{R})$ as the fractional Sobolev space with norm denoted as $\|\cdot\|_s$.
Denote $\|\cdot\|$ as the standard $L^2(\mathbb{R})$ norm. We first give a singular integral definition, which is equivalent to Fourier's definition \cite{Ten}.
  For $0<s<1$, define the fractional Laplace operator $L_s$ from $D(L_s)=H^{2s}(\mathbb{R})\subset L^2(\mathbb{R})$ to $L^2(\mathbb{R})$
  \begin{equation}\label{def-L}
    L_s v := (-\pt_{xx})^s v := C_s{\mathrm{PV}} \int_{\mathbb{R}} \frac{v(x)-v(y)}{|x-y|^{1+2s}} \ud y,
  \end{equation}
  where $C_s$ is a normalizing constant to guarantee the symbol of the resulting operator is $|\xi|^{2s}.$ 
  Especially when $s=\frac{1}{2},$ $C_s=\frac{1}{\pi}.$ Although there are different equivalent definitions, we clarify we use the singular integral definition above in the whole paper.
  %Equivalently, the operator $L_s$ can be written in Fourier's definition, [cite 10 equiva th1.1].
%  {\blue An alternative domain of the fractional operator $\ptf$ is $(\dot{H}^1\cap L^\8)(\mathbb{R})\subset L^\8(\mathbb{R})$. From \cite[Theorem 1.1]{Ten}, they are equivalent whenever the integral \eqref{def-L} is meaningful, we will use one of the definitions whenever the integral is meaningful.}  
Let us first express formally the problem we are interested in.  
  Define the nonlocal energy for Peierls-Nabarro model
\begin{equation}\label{tm1.4}
  E(u):= \int_{\mathbb{R}} \frac{1}{2}|(-\pt_{xx})^{\frac{1}{4}} u|^2 \ud x +  \int_{\mathbb{R}} F(u) \ud x.
\end{equation}
Alternatively, we can rewrite the nonlocal energy using a singular kernel
\begin{equation}\label{nonlocalE}
\begin{aligned}
  E(u)= &\frac{1}{4\pi} \int_{\mathbb{R}} \int_{\mathbb{R}}  \frac{(u(x)-u(y))^2}{|x-y|^2} \ud x \ud y +\int_{\mathbb{R}} F(u) \ud x\\
  =& \frac{1}{4} \int_{\mathbb{R}} \int_{\mathbb{R}}  J(x-y)(u(x)-u(y))^2 \ud x \ud y + \int_{\mathbb{R}} F(u) \ud x,
  \end{aligned}
\end{equation}
where $J(z)=\frac{1}{\pi z^2}$ and we used the identity
$$\frac{1}{2}\int_{\mathbb{R}} u\ptf  u  \ud x = \frac{1}{4\pi} \int_{\mathbb{R}} \int_{\mathbb{R}} \frac{(u(x)-u(y))^2}{|x-y|^2} \ud x \ud y.$$
Then the dynamic Peierls-Nabarro model is the following Allen-Cahn gradient flow
\begin{equation}\label{N}
  \pt_t u  = - \frac{\delta E(u)}{\delta u}= -\ptf  u - f(u)=-\A u,
\end{equation}
where the nonlocal nonlinear operator $\A$ formally defined as
\begin{equation}
  \A u : = \ptf  u + f(u)= L_{1/2} u+ f(u).
\end{equation}
Due to the presence of dislocation, with magnitude of Burgers vector $b=4$ in Remark 1, we are interested in solution with behavior at far field
\begin{equation}\label{bc1.8}
u(\pm\8,t)=\pm 1.
\end{equation}
The readers may see three main issues here. First, the displacement function $u$ is bounded but not vanish at far field. How does this boundary condition at far field remain as time evolving? Second, can the nonlocal operator $\ptf$ defined above on $H^1(\mathbb{R})$ be extended to  $L^\8(\mathbb{R})$ function with boundary condition \eqref{bc1.8}?
Third, non-vanishing boundary condition at far field leads to an infinite nonlocal elastic  energy $\int_{\mathbb{R}} \frac{1}{2}|(-\pt_{xx})^{1/4} u|^2 \ud x$ on slip plane (see footnote\footnote{There exists $A>0$ such that $u>\frac12$ for $x>A$ while $u<-\frac12$ for $x<-A$. Therefore $$\frac{1}{4\pi} \int_{\mathbb{R}} \int_{\mathbb{R}}   \frac{(u(x)-u(y))^2}{|x-y|^2} \ud x \ud y\geq \frac{1}{4\pi} \int_{x>A} \int_{y<-A}   \frac{1}{2(x^2+y^2)} \ud x \ud y=\8.$$} below), as well as an infinite  elastic bulk energy in upper and lower space, which is equivalently connected to the nonlocal elastic energy;  see precise statement in the perturbed sense established \cite{our}  by introducing a concept of elastic extension. This singularity in energy is analogous to the
vortex singularity in fluid mechanics or a single electron in electromagnetism, which inspires us to define a perturbed energy with respect to a reference state, steady profile described below.

We observe the typical bistable steady solution to \eqref{Neqn}, which will be used as reference state later.
Assume $\phi$ is the steady solution to \eqref{Neqn} satisfying
\begin{equation}\label{steay1.7}
  \A \phi =0,\quad \phi(\pm \8)=\pm 1.
\end{equation}
Since $\phi$ is smooth enough, we remark the operator $\ptf$ acting on $\phi$ is equivalent to $\ptf\phi=H(\phi')$ (see footnote\footnote{Since $\phi$ is uniformly bounded, only $y=x$ is the singular point in the singular integral definition \eqref{def-L}. Therefore $\ptf \phi:= \frac{1}{\pi} \mathrm{PV} \int_{\mathbb{R}} \frac{\phi(x)-\phi(y)}{|x-y|^2 } \ud y=\lim_{\eps\to 0} \frac{1}{\pi} \int_{|y-x|>\eps} \frac{\phi(x)-\phi(y)}{|x-y|^2 } \ud y =\lim_{\eps \to 0} \int _{|y-x|>\eps} \frac{\phi'(y)}{x-y}\ud y=H(\phi')$ due to integral by parts.}  below), where $H$ is the Hilbert transform
$$(Hu)(x):=\frac{1}{\pi} PV \int_{-\8}^{+\8} \frac{u(y)}{x-y}\ud y.$$
Indeed, $\phi(x)=\frac{2}{\pi}\arctan(x)$ is one special solution with fixed center at zero, i.e. $\phi(0)=0$.  Notice $f(\frac{2}{\pi}\arctan(x))=-\frac{1}{\pi}\sin (2 \arctan x)= -\frac{2}{\pi} \frac{x}{1+x^2}$ and $\ptf  \phi(x) = H (\phi'(x)) = H(\frac{2}{\pi}\frac{1}{1+x^2})=\frac{2}{\pi} \frac{x}{1+x^2}.$
 We can check
\begin{equation}\label{steadyPN}
\A \phi = \ptf  \phi + f(\phi)=0,
\end{equation}
and
\begin{equation}\label{phiasy}
\phi(x) \sim \pm 1 -\frac{2}{\pi x},\quad \text{ as } x\to \pm \8.
\end{equation}

%Differentiating \eqref{BO}, we can see
%$$\ptf(u')=-W''(u)u',$$
%which implied $\Phi'=\frac{-4x}{(1+x^2)^2}$ will be in $\kerr(L_B).$
%However, since $\phi$ is the ground state to PN model, so
%$$\phi'=(\frac{2}{\pi}\arctan x)'=\frac{2}{\pi} \frac{1}{1+x^2}\in \kerr(L_B).$$

%$\kerr$ becomes two-dimensional???

%In \cite[Theorem 2.3]{frank3}, $\kerr(L_B)=span\{\Phi'\}$ only when $\Phi$ also minimize the Weinstein energy, which in our case is infinite??

In this paper,
we consider the long time behavior of solution to the dynamic equation
\eqref{N}
with initial data $u_0$ such that $u_0(\pm\8)=\pm 1$.
Our goal is to prove there is $x_0$ such that as $t\to \8$
$$u(x,t)\to \phi(x-x_0)$$
uniformly with exponential decay rate.
To make the infinity integrals meaningful, we define the perturbed  energy as
\begin{equation}\label{E-ref}
\E(u):= \frac{1}{2}\int_{\mathbb{R}} |(-\Delta)^{1/4} (u-\phi)|^2 -(u-\phi)f(\phi) + F(u) \ud x.
\end{equation}
We will study
\begin{equation}\label{Neqn}
  \pt_t u  = -\frac{\delta {\E}(u)}{\delta u}= -\A u
\end{equation}
 with initial data $u(x,0)=u_0(x)$ satisfying
\begin{eqnarray}
% \nonumber to remove numbering (before each equation)
  %&u_0(\pm)=\pm 1; \label{in1} \\
  &\text{ (i) } \E(u_0)<+\8; \label{in2}\\
  &\text{(ii) there exists constants } a\leq b \text{ such that }\nonumber\\
  &\phi(x-b)\leq u_0(x) \leq \phi(x-a)\label{in3}.
\end{eqnarray}
Thanks to the theory of analytic semigroup, we first validate this dynamic equation for $u$ by proving the global classical solution to the perturbation with respect to the reference profile, $v:=u-\phi$; see more details in Section \ref{sec2.1}.

%\begin{rem}
%  The general potential with two free parameters $c, b$, $F:=\frac{c}{\pi^2}(1+\cos(\frac{\pi u}{b}))$. It can be reduced to the  problem \eqref{Neqn} by rescaling. Indeed, set $\eps = \frac{c}{b^2},$ $x=\eps \hat{x}$ and $u(x)=b \hat{u}(\hat{x})$. Then dropping hats we obtain the  problem \eqref{Neqn}.
%\end{rem}

\subsection*{Main Results and Related References.}
Below, we state the main result for uniform exponential convergence of dynamic solution to PN model to its equilibrium profile.
\begin{thm}\label{mainth}
  Assume initial data $u_0(x)-\phi(x)\in H^\frac{1}{2}(\mathbb{R})$ then \eqref{Neqn} has a unique global smooth solution $u(x,t)$. Furthermore, if  $u_0$ satisfies  \eqref{in2} and \eqref{in3}, then there exist constants $x_0$, $c$ and $\mu$ such that
\begin{equation}
  |u(x,t)-\phi(x-x_0)| \leq c \min\{\frac{1}{1+|x|}, e^{-\mu t}\}\quad  \text{ for any }t>0, \, x\in \mathbb{R}.
\end{equation}
\end{thm}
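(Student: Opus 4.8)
The plan is to combine a comparison (maximum) principle, which controls the solution geometrically, with a spectral-gap analysis of the linearization around $\phi$, which supplies the exponential rate. First I would establish a comparison principle for the nonlocal parabolic equation $\pt_t u = -\ptf u - f(u)$: since $\ptf$ obeys the maximum principle and $f$ is smooth and bounded on the relevant range, ordered initial data remain ordered in time. Because the two barriers $\phi(\cdot-a)$ and $\phi(\cdot-b)$ are themselves stationary solutions and \eqref{in3} traps $u_0$ between them, this yields
\[
\phi(x-b)\leq u(x,t)\leq \phi(x-a),\qquad \forall\,t>0,\ x\in\mathbb{R}.
\]
From the far-field expansion \eqref{phiasy} one has $\phi(x-a)-\phi(x-b)=O(1/x^2)$ as $|x|\to\8$, so for \emph{any} $x_0\in[a,b]$ the trapped solution already satisfies the spatial half of the bound, $|u(x,t)-\phi(x-x_0)|\leq c/(1+|x|)$; it then remains to single out the correct $x_0$ and to produce the temporal rate $e^{-\mu t}$.

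For the dynamics I would exploit the gradient-flow structure: the perturbed energy $\E(u(t))$ is nonincreasing with $\frac{\ud}{\ud t}\E(u)=-\|\A u\|^2$, and on the trapped set $\E$ is bounded below, so $\int_0^\8\|\A u\|^2\,\ud t<\8$. Parabolic smoothing together with the trapping gives compactness of the orbit, so the $\omega$-limit set is nonempty and, by a LaSalle argument, consists of stationary profiles $\A w=0$ with $w(\pm\8)=\pm1$; using uniqueness of the steady state up to translation these are exactly the shifts $\phi(\cdot-s)$, $s\in[a,b]$. Hence $u(t)$ approaches the one-parameter manifold $\{\phi(\cdot-s)\}$.

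The core is the linearization $\LL:=\ptf+f'(\phi)=\ptf-\cos(\pi\phi)$. Differentiating \eqref{steadyPN} in the translation variable gives $\LL\phi'=0$, and since $\phi'=\frac{2}{\pi}\frac{1}{1+x^2}>0$ lies in $L^2$, a ground-state (Perron--Frobenius) argument for the nonlocal Schr\"odinger operator shows that $0$ is a simple, principal eigenvalue; because $f'(\phi)\to 1$ at infinity the essential spectrum is $[1,\8)$, so there is a genuine spectral gap $\lambda_1:=\inf\big(\mathrm{spec}(\LL)\setminus\{0\}\big)>0$. I would then introduce a modulation $x_0(t)$ through the orthogonality condition $\la u(t)-\phi(\cdot-x_0(t)),\,\phi'(\cdot-x_0(t))\ra=0$ (well defined since $u-\phi(\cdot-s)\in L^2$ by \eqref{phiasy} and the $H^{1/2}$ assumption), decompose $u=\phi(\cdot-x_0(t))+w$ with $w\perp\phi'(\cdot-x_0(t))$, and derive $\frac{\ud}{\ud t}\|w\|^2\leq -2\lambda_1\|w\|^2+C\|w\|^3$. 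Once the orbit is close enough to the manifold (guaranteed by the previous step) this differential inequality gives $\|w(t)\|\leq c\,e^{-\mu t}$, while projecting the $w$-equation onto $\phi'$ shows $\dot x_0(t)=O(\|w\|^2)$, so $x_0(t)$ converges exponentially to a single shift $x_0$. A final bootstrap through the analytic-semigroup smoothing estimates upgrades the $L^2$ (or $H^{1/2}$) decay to the uniform bound $|u(x,t)-\phi(x-x_0)|\leq c\,e^{-\mu t}$, which combined with the spatial bound yields the stated $\min\{1/(1+|x|),\,e^{-\mu t}\}$ estimate.

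The main obstacle will be the spectral analysis of the nonlocal operator $\LL$: unlike the second-order case there is no Sturm--Liouville theory available, so simplicity of the zero eigenvalue and the existence of a genuine gap below the essential spectrum must be argued via positivity of $\phi'$ and careful control of the discrete spectrum. A secondary difficulty is the coupling between the zero mode and the modulation parameter --- one must show the drift $\dot x_0(t)$ is quadratically slaved to $\|w\|$ and does not accumulate, so that $x_0(t)$ converges rather than drifting; the nonlocality of $\ptf$ makes the commutator and remainder estimates in this modulation more delicate than in the local Allen--Cahn setting.
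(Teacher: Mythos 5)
Your proposal follows essentially the same architecture as the paper: trapping by comparison between $\phi(\cdot-a)$ and $\phi(\cdot-b)$ (which gives the spatial bound $c/(1+|x|)$ exactly as in Lemma \ref{highes}), energy dissipation plus an $\omega$-limit argument identifying the limit set with translates of $\phi$, the spectral gap for $\ptf+f'(\phi)$ with essential spectrum $[1,+\8)$ and simple zero eigenvalue spanned by $\phi'$, and a modulated decomposition $u=\phi(\cdot-x_0(t))+w$ with $w\perp\phi'(\cdot-x_0(t))$, closed by a Gronwall inequality with cubic remainder and $\dot x_0=O(\|w\|^2)$; this matches Sections \ref{sec2}--\ref{sec4} step for step.

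The deviations concentrate at the paper's central technical obstruction, the absence of a uniform-in-time $H^1$ bound, and one of them is a genuine soft spot in your ordering of steps. The paper characterizes $\omega$-limit points only along a subsequence with vanishing dissipation $\Q(t_n)\to 0$ (Lemma \ref{DecayQ}), which is what supplies the $\dot H^1$ bound \eqref{tm-H1} needed both to pass to the limit equation and, via Ladyzhenskaya, to get sup-norm closeness; this closeness is then propagated to \emph{all} large times by the barrier-based stability Proposition \ref{lem4.7} \emph{before} the modulation argument starts. In your plan the analytic-semigroup smoothing is invoked only at the very end, which makes the Gronwall step circular as written: ``close enough to the manifold'' from your LaSalle step is $L^2$-closeness, while dominating the cubic term $\int f''(\xi)w^3$ by $\tfrac{\lambda_2}{2}\|w\|^2$ requires $\max_x|w|$ small. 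Your smoothing route can repair this and is a legitimately different mechanism from the paper's subsequence-plus-barrier trick, but note that $\|Ae^{-As}\|\sim s^{-1}$ is not integrable, so the mild formula yields uniform $D(A^\theta)=H^\theta$ bounds only for $\theta<1$, never a uniform $H^1$ bound; since $H^\theta\hookrightarrow L^\8$ for $\theta>\tfrac12$, interpolating the uniform $H^\theta$ bound against the $L^2$ smallness/decay does give the needed $L^\8$ smallness and, later, exponential $L^\8$ decay at a reduced rate --- but you must deploy it before, not after, the Gronwall step. Separately, your appeal to ``uniqueness of the steady state up to translation'' quietly assumes the $\omega$-limit point takes values in $(-1,1)$ and skips non-monotone candidates: the paper's Proposition \ref{stand} rules out $|u^*|>1$ by a separate sliding argument using the periodicity of $f$ (Case 2), and the standard uniqueness results concern monotone layer solutions, so this step needs either the paper's sliding argument or a citation covering non-monotone solutions with limits $\pm1$.
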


For stationary solutions to equilibrium PN model \eqref{steay1.7}, \cite{cabre2005} established the existence and uniqueness (upto a shift in $x$) of monotonic solutions by considering the corresponding local scalar problem by harmonic extension; see also \cite{cabre2015} for general nonlocal operator $(-\pt_{xx})^s,$ $0<s<1$. Recently, using different method \cite{valdinoci2013} also obtained the existence and uniqueness of monotonic solution and proved the monotonic solution is the global minimizer
of the nonlocal Allen-Cahn energy \eqref{nonlocalE} after  renormalization.
To connect the nonlocal Allen-Cahn equation \eqref{N} to the true vector field solution rigorously, rather than the analogous scalar model, \cite{our}  prove the equivalence between the nonlocal problem and the corresponding extended problem by defining a perturbed elastic bulk energy and establishing the elastic extension analogue to harmonic extension.

However, as far as we know the natural question proposed in the last section has not been studied, i.e. whether the dynamic solution to \eqref{Neqn} will converge uniformly to a  uniquely determined steady profile as $t\to +\8$.
The difficulties are essentially the singularity in energy, the lack of uniform in time $H^1$, as well as $L^2$, estimates and spectral gap analysis, which will be explained in details later.

Let us review here some related works among the vast literature of analysis for asymptotic behaviors. For the classical Allen-Cahn equation with double well potential, \cite{fife} proved the global exponential stability of  a traveling wave solution, which established the first framework to tackle the long time asymptotic behavior using  spectral gap analysis for diffusion operator linearized along traveling waves; see also \cite{pego} for invariant manifold method. Under the small perturbation assumption, \cite{Xin1, Xin2} proved the multidimensional stability of traveling wave  {solutions}. Furthermore, for nonlocal Allen-Cahn equation with nonsingular kernel, \cite{bates1} study the properties and travelling wave solutions as well as the uniform asympototic stability. For a class of integro-differential  {equations which contain} a nonlocal term expressed by {the} convolution of $u$ with some nonsingular kernel, \cite{chen1997} established an abstract theorem for uniqueness, existence and exponential stability of traveling wave {solutions} while \cite{chen2002} presents spectral analysis for linearized {operators} along traveling wave  {solutions} and obtain multidimensional stability for small perturbations. We are unaware of any asymptotic stability  results for nonlocal operator with singular kernel, whose steady profile has infinite energy.
As for the estimates for smallest eigenvalues of local or nonlocal Schr\"odinger operator,  we refer to \cite{frank, frank3, laskin, monneau2012,  frank-lieb, simon1990} and reference therein.  Let us also mention some results for  {dislocation models} in lager scale, described by dislocation density function. Analytic results such  {as} well-posedness for dislocation particle system, slow motion and  concentration of transition layers are established in \cite{DFV, monneau2012, DPV, PV0, PV1, PV2}.
%For more results on other dislocation involved dynamics models one can refer to   \cite{Fonseca, Monneau1, Pell, Leoni1, Leoni2, Leoni3} and the references therein

\subsection*{Difficulties and Methods.}
The general idea is  to first prove the dynamic solution will uniformly converge to a shifted steady profile $\phi(x-x_0)$. Then by the spectral analysis for nonlocal  Schr\"odinger operator, which is linearized along the steady profile, we obtain the exponential decay rate.

The essential difficulties for the uniform convergence are compactness and characterization of limit set. As shown in the footnote in the previous page, we have an
 infinite nonlocal energy, which is only meaningful with the perturbed definition \eqref{E-ref}. However, we don't know if it has a lower bound. In other words, we do not have a uniform in time $L^2$ bound for the perturbed  solution although the semi-norm $\dot{H}^{1/2}$ is bounded. Moreover, unlike the local problem, we do not have a uniform in time $H^1(\mathbb{R})$ estimate, which is beyond the energy space. So we define a special $\omega$-limit set with vanishing dissipation; see Definition \ref{def-o}. For this kind of $\omega$-limit set,  which takes advantage of the vanishing dissipation property for a sequence of solution $u(x, t_n)$,  we have uniform estimate for $\|u(x,t_n)\|_{H^1}$ and can characterize the limit uniquely as a shifted steady profile $\phi(x-x_0)$; see Proposition \ref{stand}. Moreover, for the compactness of the solution, it is not the case for problems with local operator or nonlocal problems with nonsingular kernel so we can not obtain the compactness using modulus of continuity. By imposing the initial condition \eqref{in3} and thanks to the comparison principle and good decay properties for steady profile $\phi$, we obtain the compactness in Section \ref{sec2.3}. This, together with the characterization of $\omega$-limit set, leads to a convergence from $u(x, t_n)$ to $\phi(x-x_0)$. Notice the  vanishing dissipation property valids only for the  subsequence we extracted.  By further proving for any $t$ large enough, the solution will stay around the steady profile $\phi(x-x_0)$, we finally obtain the uniform convergence in Theorem \ref{Uni}.

Although the spectral analysis for the linearized  nonlocal Schr\"odinger operator is standard, we give a new proof  involving some particular global properties of the fractional Laplace operator, which allow us to construct a sequence of eigenfunctions with minimal points locating in the concave part of double well potential $F$; see Proposition \ref{prop-s3}. 
 The spectral gap obtained in Theorem \ref{gap} shows a lower bound for the norm of the linearized nonlocal operator for any $u$ orthogonal to $\phi'$. Using this property, we prove the exponential decay by first shifting the dynamic solution to the orthogonal space of some nonlocal  Schr\"odinger operator linearized along some steady profile $\phi(x-x_0-\alpha(t))$ in terms of a dynamic coordinate $x-\alpha(t)$ and then proving the shifting coordinate $\alpha(t)$ will converge to zero exponentially; see Section \ref{sec4} and Theorem \ref{mainth}. It worth to mention in the proof of Theorem \ref{mainth}, due to the lack of uniform in time $H^1$ bound, we play the same trick to first deal with the subsequence with vanishing dissipation.

\subsection*{Outlines} The rest of this paper is organized as follows. In Section \ref{sec2}, we will first prove  the uniform convergence of the dynamic solution $u(x,y)$ to its equilibrium, which is uniquely characterized as a shifted steady profile, i.e. $\phi(x-x_0)$. In Section \ref{sec3}, we establish the spectral decomposition for linearized nonlocal schr\"odinger operator, which leads to a spectral gap. All the proofs for the detailed spectral decomposition are  in Appendix \ref{App_B}. In Section \ref{sec4}, we combine the spectral gap with the uniform convergence to finally obtain the exponential decay of dynamical solution to its equilibrium $\phi(x-x_0)$.

\section{Uniform convergence from the dynamic solution to the steady profile $\phi$}\label{sec2}

This section will focus on the uniform convergence from the dynamic solution to its equilibrium, which involves essentially two main questions, compactness and characterization of the $\omega$-limit set. Here the $\omega$-limit set is a special one defined in Definition \ref{def-o}, which takes advantage of the property of solutions with a vanishing dissipation. For this kind of $\omega$-limit set, we can characterize it uniquely as a shifted steady profile $\phi(x-x_0)$ in Section \ref{sec2.2}. Then thanks to the compactness and stability guaranteed by comparison principle, we will obtain the uniform convergence to $\phi(x-x_0)$ in Section \ref{sec2.4}. We shall first clarify the existence and uniqueness of global classical solution to the dynamic problem \eqref{Neqn}.

\subsection{Global classical solution}\label{sec2.1}
Recall \eqref{Neqn} and $\A\phi=0$. Set perturbation function as
$$v(x,t):=u(x,t)-\phi(x).$$
 Then the dynamic equation for $v$ is
\begin{equation}\label{v-eq}
  \pt_t v  = -L_{\frac12} v - f(u) + f(\phi)
\end{equation}
with initial data $v_0(x)=u_0(x)-\phi(x),$ where $u_0(x)$ satisfies \eqref{in2} and \eqref{in3}.
Notice that if $u_0(x)$ satisfies \eqref{in2} and \eqref{in3}, then from $F(\cdot)\geq 0$ and $\|\phi(\cdot)\|<c$ we know
$v_0(x)\in H^{\frac{1}{2}}(\mathbb{R}).$
  We will use the theory for contraction semigroup to first establish the existence and uniqueness of global classical solution to \eqref{v-eq}.
Define the free energy for $v$ as
\begin{equation}\label{v-energy}
 \F(v):=\int \frac{1}{2}|(-\Delta)^{1/4} v|^2 -vf(\phi) + F(v+\phi) \ud x.
\end{equation}
Denote
  \begin{equation}\label{A2.3}
    A v := (L_{\frac{1}{2}}+I) v,
\quad
    G( v) := f(\phi)-f(v+\phi)+v.
  \end{equation}
%  From [Henry, p19], $L_s$ is a sectorial operator from $D(L_s)=H^s(\mathbb{R})\subset L^2(\mathbb{R})\to L^2(\mathbb{R})$ such that
%$$S_{0,\beta}:= \{\lambda\,|\, \beta \leq |\arg(\lambda)|\leq \pi, \, \lambda\neq 0 \}$$
%is in the resolvent set of $L_s$ and
%\begin{equation}
%  \|(\lambda-L_s)^{-1}\|\leq 1/|\lambda| \quad \text{ for all } \lambda\in S_{0, \beta}.
%\end{equation}
Then \eqref{v-eq} becomes
  \begin{equation}\label{v-eq-n}
    \pt_t v  = \A \phi - \A u = -A v + G(v).
  \end{equation}

From now on, $c$  and $C$ will be  genetic constants whose values may change from line to line.
We have the following well-posedness result for \eqref{v-eq}. The proof is standard but to show the idea clearly, we give a brief proof in Appendix \ref{App_A} for $v_0\in H^1$. For the case $v_0\in H^{1/2}$, the idea is similar by analytic semigroup and we refer to \cite{Henry}.
 \begin{thm}\label{strongslu}
 Assume initial data $v_0(x):=u_0(x)-\phi(x)\in H^1(\mathbb{R}).$
 \begin{enumerate}[(i)]
    \item There exists global unique solution
 \begin{equation}
   v\in C^1([0,\8); L^2(\mathbb{R}))\cap C([0,\8); H^1(\mathbb{R})) \quad
 \end{equation}
to \eqref{v-eq-n} such that $v(x,0)=v_0(x)$ and $\pt_t v , Av, G(v)\in L^2(\mathbb{R})$ and the equation \eqref{v-eq-n} is satisfied in $L^2(\mathbb{R})$ for any $t>0$;
    \item the solution can be expressed by
\begin{equation}\label{mild}
  v(t) = e^{-At}v_0 + \int_0^t e^{-A(t-\tau)} G(v(\tau)) \ud \tau;
\end{equation}
    \item for any $k,j\in \mathbb{N}^+$ and $\delta >0$ there exist $C_{\delta, k, j},\, c$ such that
\begin{equation}\label{highreg}
\begin{aligned}
  v\in C^k((0,\8);H^j(\mathbb{R}));\\
  \|\pt_t^k v(\cdot, t) \|_j \leq C_{\delta, k,j} e^{c t}, \quad  t\geq \delta;
  \end{aligned}
\end{equation}
\item we have the energy identity
\begin{equation}\label{dissi}
 \frac{\ud \F(v(t))}{\ud t} =  - \int [-(-\Delta)^{1/2} v -f(v+\phi)+f(\phi)]^2 \ud x  =: -\mathcal{Q}(v(t))\leq 0.
\end{equation}
 \end{enumerate}
\end{thm}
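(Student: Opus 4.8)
The plan is to read \eqref{v-eq-n} as a semilinear parabolic equation driven by the analytic semigroup generated by $-A$, and to run the standard analytic-semigroup machinery (as in the cited \cite{Henry}), exploiting the crucial feature that the nonlinearity here is \emph{globally} Lipschitz rather than merely locally so.

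\textbf{Step 1 (semigroup and nonlinearity).} First I would observe that $A=L_{1/2}+I$ has Fourier symbol $|\xi|+1\geq 1$, hence is self-adjoint and strictly positive on $L^2(\mathbb{R})$ with domain $D(A)=H^1(\mathbb{R})$. Consequently $-A$ generates an analytic contraction semigroup $e^{-At}$, realized as the Fourier multiplier $e^{-(|\xi|+1)t}$, which yields the smoothing bound $\|A^\alpha e^{-At}\|_{L^2\to L^2}\leq C_\alpha t^{-\alpha}e^{-t}$ for $\alpha\geq 0$ and the contractivity $\|e^{-At}\|_{H^s\to H^s}\leq e^{-t}$ on every $H^s$. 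For the nonlinear part, since $f(u)=-\frac1\pi\sin(\pi u)$ has $\|f'\|_\infty\leq 1$, the map $G(v)=f(\phi)-f(v+\phi)+v$ is a Nemytskii operator with $G(0)=0$ and $|G(v_1)-G(v_2)|\leq 2|v_1-v_2|$ pointwise, so $G:L^2\to L^2$ is globally Lipschitz with $\|G(v)\|\leq 2\|v\|$.

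\textbf{Step 2 (global existence, uniqueness, mild form).} I would then solve the fixed-point problem \eqref{mild} by a contraction mapping on $C([0,T];L^2)$. Because the Lipschitz constant of $G$ is uniform, the contraction constant is independent of the size of the data, so the local solution extends to all of $[0,\8)$ with no blow-up; an energy estimate $\frac{\ud}{\ud t}\|v\|^2=-2\la v,Av\ra+2\la v,G(v)\ra\leq 2\|v\|^2$ gives the a priori bound $\|v(t)\|\leq e^{t}\|v_0\|$. This produces (ii) and the uniqueness in (i). Since $v_0\in H^1=D(A)$ and $G$ maps $H^1$ into $H^1$ (verified in Step 3), the abstract theory promotes the mild solution to a strong one with $v\in C([0,\8);H^1)\cap C^1([0,\8);L^2)$ and $\pt_t v,\,Av,\,G(v)\in L^2$, \eqref{v-eq-n} holding in $L^2$, which is (i).

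\textbf{Step 3 (higher regularity and energy identity).} For (iii) I would bootstrap by differentiating \eqref{mild} and using $\|A^\alpha e^{-At}\|\lesssim t^{-\alpha}$ together with the smoothness of $f$, which raises both the time and space regularity on $(0,\8)$; the factors $e^{ct}$ arise from the Gronwall constant governed by the Lipschitz bound of $G$. The point requiring genuine care is that $G$ preserves $H^j$: here the cancellation inside $f(v+\phi)-f(\phi)$ is essential, since $f(v+\phi)$ by itself does not decay. Writing $\pt_x[f(v+\phi)-f(\phi)]=f'(v+\phi)\,\pt_x v+[f'(v+\phi)-f'(\phi)]\phi'$ and using $\phi'\in H^{j-1}\cap L^\8$ with the boundedness of all derivatives of $f$ shows the difference lies in $H^j$ whenever $v\in H^j$, which closes the bootstrap. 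Finally (iv) follows from the gradient-flow structure: one checks directly that $-Av+G(v)=-\frac{\delta\F}{\delta v}=-L_{1/2}v+f(\phi)-f(v+\phi)$, so that, using $\pt_t v\in L^2$, $v\in H^1$, and the self-adjointness of $(-\Delta)^{1/2}$ to justify differentiating \eqref{v-energy}, one obtains $\frac{\ud}{\ud t}\F(v(t))=-\|\pt_t v\|^2=-\Q(v(t))\leq 0$, which is \eqref{dissi}.

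\textbf{Main obstacle.} I expect the only genuinely delicate step to be the regularity bootstrap in (iii), precisely the claim that the composition operator $G$ maps $H^j$ into $H^j$ with the required decay at infinity; once the global Lipschitz property of $G$ and the $H^j$-mapping property are secured, all remaining parts reduce to the textbook analytic-semigroup argument.
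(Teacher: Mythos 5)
Your proposal is correct, and it rests on the same two structural pillars as the paper's Appendix~\ref{App_A} proof: the global Lipschitz bound $\|G(v_1)-G(v_2)\|\leq 2\|v_1-v_2\|$ coming from $\|f'\|_{L^\8}\leq 1$, and the fact that $G$ preserves $H^1$ (and $H^j$) thanks to the cancellation $\pt_x\big[f(v+\phi)-f(\phi)\big]=f'(v+\phi)\pt_x v+\big[f'(v+\phi)-f'(\phi)\big]\phi'$ --- the paper's Step~1(b) and Step~4 are exactly this computation, so you correctly identified the one genuinely delicate point. Where you differ is in the mechanics of the regularity upgrade: the paper never invokes fractional powers or analytic smoothing. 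It stays inside the contraction-semigroup framework ($A$ m-accretive, $\|e^{-At}\|\leq 1$), proves Lipschitz continuity of $t\mapsto v(t)$ and $t\mapsto G(v(t))$ by Gronwall starting from $v_0\in D(A)$, differentiates the Duhamel formula using a.e.\ differentiability of Lipschitz curves in the reflexive space $L^2$ (a Rademacher-type theorem), and then obtains (iii) by rerunning the same existence machinery on the differentiated equations for $w_1=\pt_t v$ and $w_2=\pt_x v$. Your route instead differentiates Duhamel using the analytic smoothing $\|A^\alpha e^{-At}\|\lesssim t^{-\alpha}$; this is precisely the Henry-style argument that the paper explicitly defers to for the rougher case $v_0\in H^{1/2}$, so your method buys smoothing for data below $D(A)$, while the paper's is more elementary for $v_0\in H^1$. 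Two small repairs: your smoothing bound should read, e.g., $\|A^\alpha e^{-At}\|_{L^2\to L^2}\leq C_\alpha t^{-\alpha}e^{-t/2}$, since with the factor $e^{-t}$ it fails for large $t$ (the multiplier $(1+|\xi|)^\alpha e^{-(1+|\xi|)t}$ equals $e^{-t}$ at $\xi=0$, which exceeds $C_\alpha t^{-\alpha}e^{-t}$ once $t>C_\alpha^{1/\alpha}$); and to promote the mild solution to a strong one you should run the fixed point in $C([0,T];H^1)$ rather than $C([0,T];L^2)$ --- using that $G$ is locally Lipschitz on $H^1$ via $H^1\hookrightarrow L^\8$ --- so that $G(v(\cdot))\in C([0,T];D(A))$ and the classical theorem for inhomogeneous equations with $v_0\in D(A)$ applies; the paper secures the same conclusion through its hands-on difference-quotient estimates. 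Your treatment of (iv), identifying $\frac{\ud}{\ud t}\F(v(t))=-\|\pt_t v\|^2=-\Q(v(t))$ from the gradient-flow structure once the regularity of (i) and (iii) is in hand, matches the paper's Step~5.
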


%\begin{lem}[Estimates for perturbation energy]\label{perE}
%Assume $u(x,t)$ is the dynamic solution to \eqref{Neqn} with initial data satisfying \eqref{in2}.
%  Recall the perturbation energy $E(u)$ in terms of the standing wave profile $\phi(x)$,
%$$\E(u):= \frac{1}{2}\int |(-\Delta)^{1/4} (u-\phi)|^2 -(u-\phi)f(\phi) + F(u) \ud x.$$
%Then  $\E(u)$ is bounded from below and
%
%Moreover, if $u_0$ also satisfies \eqref{in3}, we have
%\begin{equation}\label{H12}
%  \|(-\pt_{xx})^{1/4} (u-\phi)\|^2\leq c \quad \text{ for any }t>0.
%\end{equation}
%\end{lem}

%Since $\|f(\phi)\|$ and $\|u-\phi\|$ are bounded by $\left\|\frac{c}{1+|x|}\right\|$ due to Lemma \ref{highes}, we have
%\begin{equation}
%  \|(-\pt_{xx})^{1/4} (u-\phi)\|^2\leq C+ 2\|f(\phi)\|\|u-\phi\|\leq c \quad \text{ for any }t>0.
%\end{equation}
%Therefore
%\begin{align*}
%  &\left|\int_{\mathbb{R}} |(-\pt_{xx})^{1/4}u|^2- |(-\pt_{xx})^{1/4}\phi|^2 \ud x\right|\\
%\leq& \|(-\pt_{xx})^{1/4} (u-\phi)\|^2+2\|(u-\phi)\|\|f(\phi)\| \ud x\leq c,
%\end{align*}
%and thus $\E(u(t))$ is bounded from blow.

\subsection{Characterization of $\omega$-limit set}\label{sec2.2}
In this section, we devote efforts to characterize the $\omega$-limit set whenever it is not empty. We will characterize it for sequence $u(x, t_n)$ with vanishing dissipation.
\begin{lem}[Vanishing sequence for dissipation]\label{DecayQ}
Assume $\F(t)$ is bounded from below and $\F'(t)\leq 0$. Let $\Q(t)=-\F'(t)$ defined in \eqref{dissi}.
Then there exists a subsequence $t_n \to +\8$ such that
\begin{equation}
  \Q(t_n)=-\F'(t_n) \to 0.
\end{equation}
\end{lem}
\begin{proof}
  Notice that the conclusion in the lemma is equivalent to
\begin{center}
    For any $\eps>0$, any $T>0$, there exists $t^*>T$ such that
$ -\eps < \F'(t) \leq 0 .$
\end{center}
Then we use the contradiction argument. If not, there exists $\eps_0>0$ and $T>0$ such that for any $t>T$, $\F'(t)<-\eps_0.$ It implies $\F(t)\to - \8$, which contradicts with $\F(t)$ is bounded from below.
\end{proof}

Now we define the special $\omega$-limit set as below.
\begin{defn}\label{def-o}
Assume $v(x,t)$ is the dynamic solution to \eqref{v-eq} with initial data $v_0\in H^\frac{1}{2}(\mathbb{R}).$ Let $\Q(t)=-\F'(t)$ defined in \eqref{dissi}. We define the $\omega$-limit set with vanishing dissipation as
\begin{equation}\label{omega}
  \omega(v):=\{v^*; \text{ there exist } t_n\to +\8 \text{ such that }
   v(\cdot, t_n)\to v^*(\cdot) \text{ in }L^2(\mathbb{R}) \text{ and }
   \Q(t_n)\to 0
   \},
\end{equation}
which is a subset of classical $\omega$-limit set.
\end{defn}

First we state a strict positivity property at global minima and global maxima for the nonlocal operator $\ptf$, which will be used later.
\begin{lem} (Strict positivity property at global minima and global maxima)\label{pp}
For any function $g(x)\in C(\mathbb{R})$, let $x_m, x_M \in (-\8,+\8)$ be the points where $g(x)$ attains it global minimum and maximum separately. Then we have
\begin{equation}
\ptf g(x)|_{x=x_m}<0, \quad \ptf g(x)|_{x=x_M}>0
\end{equation}
provided $g(x)$ is not a constant.
\end{lem}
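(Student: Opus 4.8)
The plan is to prove the strict positivity property directly from the singular integral definition \eqref{def-L} of $\ptf$, evaluated at the extremal points. The key observation is that at a global minimum $x_m$, the difference $g(x_m)-g(y)$ appearing in the integrand is $\leq 0$ for \emph{every} $y\in\mathbb{R}$, and strictly negative on at least a set of positive measure (since $g$ is not constant), while the kernel $|x_m-y|^{-2}$ is strictly positive. Thus the principal value integral should be strictly negative, and the analogous argument with reversed signs handles the global maximum $x_M$. The only subtlety is that $x_m$ is a genuine singular point of the kernel, so I must verify that the principal-value limit exists and that the sign is preserved through the limiting process.

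First I would fix $x_m$ and write, using \eqref{def-L} with $s=\tfrac12$ and $C_{1/2}=\tfrac1\pi$,
\begin{equation}
\ptf g(x)\big|_{x=x_m}=\frac{1}{\pi}\,\mathrm{PV}\int_{\mathbb{R}}\frac{g(x_m)-g(y)}{|x_m-y|^2}\ud y.
\end{equation}
Since $x_m$ is a global minimum, $g(x_m)-g(y)\leq 0$ pointwise, so the numerator never changes sign; this is precisely what rescues the principal value, because near the singularity the integrand has a fixed (nonpositive) sign rather than an oscillating one. I would then cut the integral into $\{|y-x_m|>\eps\}$, where the integrand is a genuine nonpositive $L^1$ function for each fixed $\eps$, and let $\eps\to 0$ by monotone convergence (the truncated integrals are monotone increasing in magnitude as $\eps\downarrow 0$), obtaining a well-defined limit in $[-\8,0]$. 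To upgrade ``$\leq 0$'' to ``$<0$'', I would use that $g$ is nonconstant and continuous: there is a point $y_0$ with $g(y_0)>g(x_m)$, hence by continuity an interval $I$ around $y_0$ on which $g(x_m)-g(y)\leq -\eta<0$; on $I$ the kernel is bounded below by a positive constant, so this interval alone contributes a strictly negative amount to the integral, while all other contributions are $\leq 0$. This forces the whole integral to be strictly negative.

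The global maximum case at $x_M$ follows by the identical argument applied to $-g$, or equivalently by observing that $g(x_M)-g(y)\geq 0$ everywhere and $>0$ on a set of positive measure, giving $\ptf g(x_M)>0$.

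The main obstacle I anticipate is \emph{not} the sign bookkeeping but the careful justification that the principal value is meaningful and behaves monotonically at the extremal point, since the standard definition of $\ptf$ in \eqref{def-L} requires enough regularity (e.g.\ $g\in H^{1}$ or $C^{1,\alpha}$ locally) for the PV to converge, whereas here $g$ is only assumed continuous. The saving grace is exactly the one-sided sign of the integrand: a continuous function whose extremum occurs at $x_m$ yields an integrand of constant sign near the singularity, so the truncated integrals $\int_{|y-x_m|>\eps}$ form a monotone family and their limit exists in the extended reals without any cancellation. Consequently I would phrase the conclusion so that it holds as an extended-real inequality (allowing the value $-\8$ at a minimum and $+\8$ at a maximum, both of which are consistent with the strict inequalities claimed), and I would make explicit that the nonconstancy of $g$ rules out the degenerate equality case.
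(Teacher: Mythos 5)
Your proposal is correct and follows essentially the same route as the paper: both exploit the fact that at a global minimum the integrand $g(x_m)-g(y)$ in the singular-integral definition of $\ptf$ has a fixed nonpositive sign, so the principal value is harmless, and nonconstancy upgrades $\leq 0$ to $<0$ (and symmetrically at the maximum). Your additional care in justifying the principal-value limit by monotone convergence and allowing extended-real values at a merely continuous extremum is a sound refinement of a point the paper's one-line proof leaves implicit, but it is the same argument.
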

\begin{proof}
From the definition of $\ptf$, since $g(x_m)\leq g(x)$ for all $x\in\mathbb{R}$,  we have
$$\ptf g(x)|_{x=x_m} \leq 0$$
and the equality holds only when $g(x)\equiv g(x_m)$ for all $x\in\mathbb{R}$. The proof for $\ptf g$ at $x_M$ is same.
\end{proof}

\begin{prop}[Characterization of $\omega$-limit set]\label{stand}
 Let $v$ be the dynamic solution to \eqref{v-eq} with initial data $v_0\in H^\frac{1}{2}(\mathbb{R}).$ Assume $\omega(v)\neq \emptyset$ and let $v^*\in \omega(v)$ defined in \eqref{omega}.  Then there exists $t_n\to +\8$ such that
  \begin{enumerate}[(i)]
    \item  $v(\cdot, t_n)\to v^*(\cdot) $ in $L^2(\mathbb{R})\cap L^\8(\mathbb{R})$;
    \item   $v^*\in H^1(\mathbb{R})$ is the steady solution to
     \begin{equation}\label{steady-eq}
     -\ptf  v^* = f(v^*+\phi)-f(\phi),
     \end{equation}
     in the sense that equation holds in $L^2(\mathbb{R})$;
    \item  $$\F(v^*)<+\8, \quad \lim_{x\to \pm\8} v^*(x)= 0;$$
    \item  moreover, there exists $x_0$ such that
    \begin{equation}
    v^*(x) = \phi(x-x_0)-\phi(x),\quad x\in\mathbb{R}.
    \end{equation}
%    \item  For $u^*:= v^*+ \phi,$
%    $$-1\leq u^*(x)\leq 1.$$
  \end{enumerate}
\end{prop}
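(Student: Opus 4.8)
The plan is to extract from the definition \eqref{omega} a sequence $t_n\to+\8$ with $v(\cdot,t_n)\to v^*$ in $L^2(\mathbb{R})$ and $\Q(t_n)\to0$, and then to upgrade this $L^2$ convergence to the stronger conclusions (i)--(iv) by first producing a uniform $H^1$ bound out of the vanishing dissipation. Write $R_n:=-\ptf v(\cdot,t_n)-f(v(\cdot,t_n)+\phi)+f(\phi)$, so that $\Q(t_n)=\|R_n\|^2\to0$ by \eqref{dissi}. Since $f$ is globally Lipschitz (indeed $|f'|=|\cos(\pi\cdot)|\le1$), we have $\|f(v(\cdot,t_n)+\phi)-f(\phi)\|\le \|v(\cdot,t_n)\|$, and the right side is bounded because $v(\cdot,t_n)\to v^*$ in $L^2$. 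Hence $\|\ptf v(\cdot,t_n)\|\le \|R_n\|+\|f(v(\cdot,t_n)+\phi)-f(\phi)\|$ is bounded, which together with the $L^2$ bound yields $\sup_n\|v(\cdot,t_n)\|_1<\8$. This uniform $H^1$ bound is precisely what the vanishing--dissipation definition of $\omega(v)$ is designed to supply, and everything else follows from it.

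For (i) and (ii) I would argue as follows. By weak compactness a subsequence of $v(\cdot,t_n)$ converges weakly in $H^1(\mathbb{R})$, and the limit must coincide with the $L^2$--limit $v^*$, so $v^*\in H^1(\mathbb{R})$. The one--dimensional interpolation $\|w\|_{L^\8}^2\le 2\|w\|\,\|w'\|$ applied to $w=v(\cdot,t_n)-v^*$, whose $H^1$ norm stays bounded while its $L^2$ norm tends to $0$, gives $v(\cdot,t_n)\to v^*$ in $L^\8$ as well, proving (i). To pass to the limit in the equation, note that $f(v(\cdot,t_n)+\phi)\to f(v^*+\phi)$ in $L^2$ (again by the Lipschitz bound and $L^2$ convergence) and $R_n\to0$ in $L^2$; since $\ptf v(\cdot,t_n)\to\ptf v^*$ in the sense of distributions, the identity $-\ptf v(\cdot,t_n)=f(v(\cdot,t_n)+\phi)-f(\phi)-R_n$ forces $\ptf v^*\in L^2$ and $-\ptf v^*=f(v^*+\phi)-f(\phi)$ in $L^2$, which is exactly \eqref{steady-eq}, giving (ii).

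For (iii), since $v^*\in H^1(\mathbb{R})\hookrightarrow C_0(\mathbb{R})$ we immediately obtain $v^*(\pm\8)=0$. Finiteness of $\F(v^*)$ follows from lower semicontinuity: the $\dot H^{1/2}$ term is finite because $v^*\in H^{1/2}$ and is weakly lower semicontinuous along the sequence; the linear term passes to the limit since $f(\phi)=-\ptf\phi\in L^2(\mathbb{R})$ (it decays like $1/|x|$) and $v(\cdot,t_n)\to v^*$ in $L^2$; and the potential term is handled by Fatou's lemma because $F\ge0$ and $v(\cdot,t_n)+\phi\to v^*+\phi$ pointwise. Combined with $\F(v(\cdot,t_n))\le\F(v_0)<\8$ (the free energy is nonincreasing by \eqref{dissi} and finite at $t=0$ by \eqref{in2}) this gives $\F(v^*)\le\liminf_n\F(v(\cdot,t_n))<\8$.

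The heart of the proposition, and the step I expect to be the main obstacle, is (iv). Setting $w:=v^*+\phi$ and using $\A\phi=\ptf\phi+f(\phi)=0$, the identity in (ii) rearranges to $\A w=\ptf w+f(w)=0$ with $w(\pm\8)=\pm1$, so $w$ is a bounded steady layer of the PN model. It then remains to identify $w$ with a translate of $\phi$. I would first confine the range of $w$ to $[-1,1]$: were $w$ to attain a global maximum strictly above $1$, it would do so at a finite point $x_M$ (since $w\to\pm1$ at infinity), and Lemma \ref{pp} would give $\ptf w(x_M)>0$, whereas $\A w=0$ forces $\ptf w(x_M)=-f(w(x_M))=\frac1\pi\sin(\pi w(x_M))<0$ once the value stays below the next well; the global minimum is treated symmetrically. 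Here the genuinely delicate points are ruling out overshoot into a neighbouring well of $F$ (using the finiteness of $\int F(w)$ and $w\to\pm1$) and then promoting $-1\le w\le1$, together with the monotone sign structure of $-f=\frac1\pi\sin(\pi\cdot)$ on this interval, to monotonicity of $w$. Once $w$ is known to be a monotone layer, the uniqueness up to translation of monotone solutions established in \cite{cabre2005, valdinoci2013} yields $w=\phi(\cdot-x_0)$, i.e. $v^*=\phi(\cdot-x_0)-\phi$. This confinement-plus-monotonicity argument, resting on the specific global positivity property of $\ptf$ in Lemma \ref{pp} and on the sine structure of $f$, is where the main work lies.
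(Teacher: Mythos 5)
Your Steps (i)--(iii) are correct and essentially coincide with the paper's proof: the same absorption of the cross term in $\Q(t_n)$ to get the uniform $\dot H^1$ bound, the same Ladyzhenskaya interpolation for $L^\8$ convergence, weak $H^1$ compactness plus lower semicontinuity to pass to the limit in the equation, and the embedding $H^1(\mathbb{R})\hookrightarrow C^{0,\alpha}$ for the decay of $v^*$. The problem is (iv), which you correctly identify as the heart of the matter but do not actually prove, and where the tools you propose would not close the argument.

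First, the confinement of $w=v^*+\phi$ to $[-1,1]$. Your sign argument at a global maximum $x_M$ only produces a contradiction when $w(x_M)\in(1,2]$: there $\ptf w(x_M)=-f(w(x_M))=\frac{1}{\pi}\sin(\pi w(x_M))\leq 0$, against Lemma \ref{pp}. But for $w(x_M)\in(2,3)$ one has $\frac{1}{\pi}\sin(\pi w(x_M))>0$, which is perfectly consistent with $\ptf w(x_M)>0$, so the maximum principle alone says nothing. Your proposed fix --- finiteness of $\int F(w)$ --- cannot rule this out either: since $w\to\pm1$ at infinity and $w$ is continuous, any excursion beyond a neighbouring well occurs on a \emph{bounded} set and contributes only finitely to $\int F(w)$, whatever its amplitude. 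The paper closes this (its Case 2) by a different mechanism: using the $2$-periodicity of $f$, it shifts the solution by an even integer $2k$ so that the offending extremum lands in $(-1,1)$ (the endpoint value $1-2k$ being excluded by Lemma \ref{pp} since $f(1-2k)=0$), then slides a translate $\phi(\cdot-\eta)$ up to a finite touching point $x_1$ of $u^*+2k$; because both functions solve the same equation and the $f$-terms cancel at the contact point, Lemma \ref{pp} gives $0=\ptf(u^*+2k-\phi(\cdot-\eta))|_{x_1}<0$, a contradiction. Second, even granting $w\in(-1,1)$, your route through monotonicity plus the uniqueness results of \cite{cabre2005,valdinoci2013} leaves the monotonicity of $w$ entirely unproved, and establishing it is as hard as the identification itself. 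The paper avoids monotonicity altogether: in its Case 1 it slides $\phi(\cdot-x_\eps)$ beneath $u^*+\eps$ with contact at $\xi_\eps$, uses the identity $\ptf v_\eps(\xi_\eps)=f'(\eta)\eps<0$ to force $\eta$ into the concave region $(-\frac12,\frac12)$ of $F$, which bounds $\xi_\eps$ and $\xi_\eps-x_\eps$ uniformly, and then lets $\eps\to0$: at the limiting contact point the strict positivity in Lemma \ref{pp} forces $u^*-\phi(\cdot-x_0)\equiv 0$ directly, with no appeal to monotonicity or external uniqueness theorems. So your outline for (iv) is a genuinely different plan, but as it stands it has two unfilled holes (overshoot beyond one period, and monotonicity), the first of which your suggested tool demonstrably cannot repair.
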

\begin{proof}
Step 1. Since $v^*\in\omega(v)$, we know there exist $t_n\to +\8$ such that $v(\cdot, t_n)\to v^*(\cdot)$ in $L^2(\mathbb{R})$. Thus  $\|v(t_n)\|\leq c$ and $\|v^*\|\leq c.$ Recall
\begin{equation}\label{tm4.32}
  \Q(t_n)=-\F'(t_n)=- \int [-(-\Delta)^{1/2} v -f(v+\phi)+f(\phi)]^2 \ud x \to 0.
\end{equation}
Therefore, $|\Q(t_n)|$ is bounded by $1$ for $n$ large enough  and thus
\begin{align*}
\|\ptf v(t_n)\|^2 \leq& -\|f(v(t_n)+\phi)-f(\phi)\|^2 + 2\int_{\mathbb{R}} |\ptf v(t_n) (f(v(t_n)+\phi)-f(\phi))| +1\\
 \leq &\|f(v(t_n)+\phi)-f(\phi)\|^2+ \frac{1}{2}\|\ptf v(t_n)\|^2 +1\\
  \leq &\max|f'|\|v(t_n)\|^2+ \frac{1}{2}\|\ptf v(t_n)\|^2 +1,
\end{align*}
which implies
\begin{equation}\label{tm-H1}
  \|v(\cdot, t_n)\|_{\dot{H}^1} \leq c.
\end{equation}
From Ladyzhenskaya's inequality, we have
\begin{equation}
\begin{aligned}
  \|v(\cdot, t_n)\|_{L^\8}&\leq \sqrt{2}\|v(\cdot, t_n)\|^{1/2}\|v(\cdot, t_n)\|^{1/2}_{\dot{H}^1}\\
&\leq c \|v(\cdot, t_n)\|^{1/2},
\end{aligned}
\end{equation}
which, after applying to $v(\cdot, t_n)-v^*(\cdot)$,
concludes (i).

Step 2.
 Notice  \eqref{tm-H1} and  $\|v(t_n)\|\leq c$. We have $\|v(t_n)\|_1^2$ is bounded and there exists a subsequence such that $v(\cdot, t_n)\rightharpoonup v^*(\cdot)$ in $H^1$ weakly.
Thus from the lower semi continuity of norm and $v(\cdot, t_n)\to v^*(\cdot) $ in $L^\8(\mathbb{R})$, we know
\begin{equation}
\int_{\mathbb{R}} [-\ptf  v^*(x) - f(v^*+\phi)+f(\phi)]^2 \ud x\leq \liminf_{t_n\to\8} \Q(t_n) \to 0,
\end{equation}
which concludes $v^*$ is the solution to \eqref{steady-eq}. Since also $f(\phi)\in L^2(\mathbb{R})$, \eqref{steady-eq} holds in $L^2$ sense and we concludes (ii). Recall free energy $\F(v)$ in \eqref{v-energy}. We obtain the bound for $\F(v^*)$ from  lower semi continuity of norm and $v(\cdot, t_n)\to v^*(\cdot) $ in $L^\8(\mathbb{R})$.  Moreover we know $v^*\in H^1(\mathbb{R})\hookrightarrow C^{0,\alpha}(\mathbb{R})$, $\alpha<\frac{1}{2}$ so $\lim_{x\to\pm\8}v^*(x)=0$ and  we conclude (iii).

Step 3. It remains to prove (iv) that all the steady solution $v^*(x)$ to \eqref{steady-eq} is exactly $\phi(x-x_0)-\phi(x)$ for some $x_0$.
Let $u^*(x):=v^*(x)+\phi(x)$. Since $\A\phi =0$ in classical sense and $v^*\in H^1(\mathbb{R})$, we know from \eqref{steady-eq} $u^*(x)$ is the solution to
\begin{equation}\label{steady-ueq}
  \ptf u^*(x) = f(u^*(x))
\end{equation}
in the sense that equation holds in $L^2(\mathbb{R})$. In two cases below, we will first prove $v^*(x)=\phi(x-x_0)-\phi(x)$ if $u^*\in(-1,1)$, then claim $u^*$ must be in $(-1,1)$ by contradiction argument.

Case 1. We assume $u^*(x)=v^*(x)+\phi(x)\in (-1,1)$. For any $\eps>0$, since $v^*(\pm\8)=0$ and $u^*(\pm\8)=\phi(\pm\8)=\pm 1$, there exist $x_\eps$ and $\xi_\eps$ such that
\begin{equation}
  v_\eps(x):=u^*(x)- \phi(x-x_\eps)+\eps\geq 0 \, \text{ for any }x\in\mathbb{R}
\end{equation}
and
\begin{equation}
  v_\eps(\xi_\eps)=u^*(\xi_\eps)- \phi(\xi_\eps-x_\eps)+\eps=0.
\end{equation}
If $v_\eps\equiv \text{const}$,
$$u(x)\equiv\phi(x-x_\eps)+\eps$$
for any $x\in\mathbb{R}$, which contradicts with $u(\pm\8)=\phi(\pm\8)=\pm 1$. Thus $v_\eps$ is not constant.

 Now we claim $x_\eps, \xi_\eps$ are both finite. Notice both $u^*(x)$ and $\phi(x-x_\eps)$ satisfy \eqref{steady-ueq}.  Since $v_\eps$ attains its minimum at $\xi_\eps$, by Lemma \ref{pp} we have
\begin{align}\label{eps}
  0> \ptf v_\eps(x)|_{x=\xi_\eps} &= \Big[-f(u^*(x)) + f(\phi(x-x_\eps))\Big] \Big|_{x=\xi_\eps}\\
  &= \Big[-f(\phi(x-x_\eps)-\eps) + f(\phi(x-x_\eps))\Big] \Big|_{x=\xi_\eps} =f'(\eta)\eps
\end{align}
with $$\eta\in [u^*(\xi_\eps),u^*(\xi_\eps)+\eps]=[\phi(\xi_\eps-x_\eps)-\eps , \phi(\xi_\eps-x_\eps)].$$
Therefore $\eta$ must locate in concave part of $F$, i.e. $\eta\in (-\frac{1}{2}, \frac{1}{2}).$
Then
\begin{equation}
u^*(\xi_\eps)\in (-\frac{1}{2}-\eps, \,\frac{1}{2})\subset[-\frac{3}{4}, \frac{1}{2}]
\end{equation}
for $\eps<\frac{1}{4}$.
Since $u^*(\cdot)\in (-1,1)$ is continuous function,
 so $\xi_\eps$ is bounded uniformly for $\eps<\frac{1}{4}$. On the other hand,
 we also have
 \begin{equation}
\phi(\xi_\eps-x_\eps) \in (-\frac{1}{2}, \,\frac{1}{2}+\eps)\subset[-\frac{1}{2}, \frac{3}{4}],
\end{equation}
  which implies $\xi_\eps-x_\eps\in [-2,2]$. This concludes $x_\eps, \xi_\eps$ are both bounded uniformly for $\eps<\frac{1}{4}$.

  Take $\eps\to 0$ and a convergent subsequence (still denote as $x_\eps, \xi_\eps$) such that   $x_\eps\to x_0$ and $\xi_\eps\to \xi$ for some $x_0$ and $\xi$. Clearly we still know $\xi-x_0\in[-2,2]$. Then we have
   \begin{equation}\label{ximini}
   \begin{aligned}
  &u^*(x)- \phi(x-x_0)\geq 0 \, \text{ for any }x\in\mathbb{R}\\
  &u^*(\xi)- \phi(\xi-x_0)=0.
  \end{aligned}
\end{equation}
   From \eqref{eps}, we know
  \begin{equation}
     0\geq \ptf (u^*(x)-\phi(x-x_0))|_{x=\xi}=\lim_{\eps\to 0}\ptf v_\eps|_{x=\xi_\eps}=\lim_{\eps\to 0}f'(\eta)\eps = 0
  \end{equation}
  This, together with $\xi$ attains the minimum  by \eqref{ximini}, leads to
  $$
  u^*(x)-\phi(x-x_0) \equiv const = 0\, \text{ for all } x\in \mathbb{R},
  $$
  which means $u^*(x)\equiv \phi(x-x_0)$ and $v^*(x)\equiv \phi(x-x_0)-\phi(x)$.

  Case 2. We assume  $u^*(x)=v^*(x)+\phi(x)\notin (-1,1)$ for some $x$. We use contradiction argument to see it is not possible. We only deal with the left side, i.e. $u^*(x)=v^*(x)+\phi(x)\leq -1$ for some $x$. The argument for the other side $u^*(x)=v^*(x)+\phi(x)\geq 1$ is same.

  Since $u^*$ is continuous function connecting from $-1$ to $-1$, then if $u^*\leq -1$, it can attain its minimal point at some  finite $x^*$.  Assume
  \begin{equation}
    u^*(x^*)=\min_{x\in\mathbb{R}} u^* \in (-1-2k, 1-2k]\quad \text{ for some }k\in \mathbb{N}^+.
  \end{equation}
  First, from the \eqref{steady-ueq}, $u^*(x^*)\neq 1-2k$. Otherwise by Lemma \ref{pp},
  \begin{equation}
    0 = \ptf u^*(x)|_{x=x^*}-f(1-2k)=\ptf u^*(x)|_{x=x^*}<0
  \end{equation}
  leads to a contradiction. Then we know $u^*(x^*)=\min_{x\in\mathbb{R}} u^*(x)\in (-1-2k, 1-2k),$  Therefore, we choose $\eta$  such that $ u^*(x)+2k\geq \phi(x-\eta)$ for any $x\in \mathbb{R}$ and $u^*(x)+2k$ touches $\phi(x-\eta)$ at the point $x_1$, i.e.
  \begin{equation}
    \left\{
       \begin{array}{ll}
         u^*(x)+2k\geq \phi(x-\eta) & \hbox{ for }x\in \mathbb{R};  \\
         u^*(x_1)+2k= \phi(x_1-\eta).
       \end{array}
     \right.
  \end{equation}
  Notice the minimal point $x^*$ is finite so $x_1, \eta$ is finite. Since $f$ is $2k$-periodic function, we have
  \begin{align*}
    0=& \Big[\ptf (u^*(x)+2k)-f(u^*(x)+2k) ~-~ \ptf (\phi(x-\eta))+f(\phi(x-\eta)) \Big]\Big|_{x=x_1}\\
    =& \ptf \Big(u^*(x)+2k -  \phi(x-\eta) \Big)\Big|_{x=x_1}<0,
  \end{align*}
  where we used Lemma \ref{pp} again.
This also gives a contradiction and we complete the proof of (iv).
\end{proof}

%\begin{rem}
%Step 2. From  the uniform estimate Lemma \ref{Hs},
%\begin{equation}\label{tm-H1}
%  \|u(\cdot, t_n)-\phi(\cdot)\|_{{H}^s} \leq c.
%\end{equation}
%From embedding inequality [Henry th1.4.4], we have
%\begin{equation}
%\begin{aligned}
%  \|u(\cdot, t_n)-\phi(\cdot)\|_{s\beta}&\leq c\|u(\cdot, t_n)-\phi(\cdot)\|^{1-\beta}\|u(\cdot, t_n)-\phi(\cdot)\|^{\beta}_{s}\\
%&\leq c \|u(\cdot, t_n)-\phi(\cdot)\|^{1-\beta},
%\end{aligned}
%\end{equation}
%which concludes (i).
%Notice
%the continuous embedding
%$$\|v\|_{C^{0,\alpha}}\leq c\|v\|_{H^s}$$
%for $\frac{1}{2}< s<1$ and $0<\alpha\leq \min(s-\frac{1}{2}, 1).$ We have
%\begin{equation}
%  u(x,t_n)-\phi(x) \to u^*(x)-\phi(x) \text{ in }C^{0,\alpha}.
%\end{equation}
%\end{rem}

%  Method 2: Using the squeeze lemma to compute directly.
%
%  (1) Set $g(x,t):= \ptf u(x,t).$ Then from \eqref{squ-s}
%\begin{align*}
%  g(x,t)&= \frac{1}{\pi} PV \int \frac{u(x)-u(y)}{|x-y|^2} \ud y \\
%&\leq  \frac{1}{\pi}PV \int \frac{\phi(x-a)-\phi(y-b)}{|x-y|^2} \ud y
%\end{align*}
%{\blue how to control the derivative??}
%
%(2) If we have the estimates for $|g(x,t)|$, then using the equation and Step 1.
%$$|\pt_t u |\leq |g(x,t)|+ |f(u)|\leq  \frac{c}{1+x} $$
%for $x>0.$
%
%Method 3. Since
%$$\|\ptf u \|=\|H u_x\|=\|u_x\|,$$
%we first consider the extension problem for $u(x,y, t)$ with $y>0$ and denote the trace as $u_\Gamma:=u(x,0,t).$
%\begin{equation}
%  \left\{
%     \begin{array}{ll}
%       \Delta u = 0, & \quad y>0; \\
%       \frac{\pt u}{\pt y}= -\ptf u_\Gamma = f(u_\Gamma)+ u_{\Gamma t}, & \quad y=0.
%     \end{array}
%   \right.
%\end{equation}

\subsection{Comparison Principle and Compactness}\label{sec2.3}

In the previous section, we have seen clearly the characterization of $\omega$-limit set with vanishing dissipation whenever it is not empty. However, in order to extract such a sequence $v(t_n)$ with a limit in $\omega(v)$ defined in \eqref{omega}, we need compactness in $L^2$. One possible way to  achieve it is  the comparison principle.

\subsubsection{Comparison Principle }

We have the following comparison principle.
\begin{prop}
Let initial data satisfies assumption \eqref{in3}. Then \begin{equation}\label{squ-s}
  \phi(x-b)  \le u(x,t) \le \phi(x-a) , \quad \forall x \in  \mathbb R, t > 0,
\end{equation}
where $b\geq a$ are constants given in \eqref{in3}.
\end{prop}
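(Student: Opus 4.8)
The plan is to prove the two bounds separately and symmetrically; I describe the upper bound $u(x,t)\le\phi(x-a)$, the lower bound being identical after replacing $a$ by $b$ and reversing the sign. Since $\A$ is translation invariant and $\A\phi=0$, the shifted profile $\bar u(x):=\phi(x-a)$ is itself a time-independent solution of \eqref{Neqn}. Setting $w:=u-\bar u$ and subtracting the two equations gives
\[
\pt_t w = -L_{\frac12} w-\bigl(f(u)-f(\bar u)\bigr) = -L_{\frac12} w - c(x,t)\,w,
\]
where $c(x,t):=\int_0^1 f'(\bar u+\theta w)\ud\theta$ satisfies $|c|\le\max|f'|=1$. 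By assumption \eqref{in3} the initial datum obeys $w(x,0)=u_0(x)-\phi(x-a)\le 0$, so it suffices to show this sign is preserved in time.

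Before the main estimate I record that all integrals below converge. Writing $w=v+\bigl(\phi-\phi(\cdot-a)\bigr)$, Theorem \ref{strongslu} gives $v\in H^1(\mathbb{R})$ (indeed $v\in H^j$ for $t>0$ by \eqref{highreg}), while the far-field asymptotics \eqref{phiasy} give $\phi(x)-\phi(x-a)=O(x^{-2})$; hence $w(\cdot,t)\in L^2(\mathbb{R})$, $w(\cdot,t)\in \dot H^{1/2}(\mathbb{R})$ and $L_{\frac12}w(\cdot,t)\in L^2(\mathbb{R})$ for each $t>0$. In particular the truncation $w^+:=\max\{w,0\}$ again lies in $L^2\cap\dot H^{1/2}$.

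The key step is an energy estimate for $w^+$. Multiplying the equation by $w^+$ and integrating,
\[
\frac12\frac{\ud}{\ud t}\|w^+\|^2 = -\int_{\mathbb{R}} w^+\, L_{\frac12} w\ud x - \int_{\mathbb{R}} c\,(w^+)^2\ud x.
\]
Symmetrizing the bilinear form $\int w^+ L_{\frac12}w$ into $\frac{1}{2\pi}\iint \frac{(w^+(x)-w^+(y))(w(x)-w(y))}{|x-y|^2}\ud x\ud y$ and invoking the pointwise inequality
\[
\bigl(w^+(x)-w^+(y)\bigr)\bigl(w(x)-w(y)\bigr)\ge\bigl(w^+(x)-w^+(y)\bigr)^2,
\]
which is verified by a short case check on the signs of $w(x)$ and $w(y)$, yields the nonlocal Kato inequality
\[
\int_{\mathbb{R}} w^+\, L_{\frac12} w\ud x \ge \frac{1}{2\pi}\int_{\mathbb{R}}\int_{\mathbb{R}}\frac{\bigl(w^+(x)-w^+(y)\bigr)^2}{|x-y|^2}\ud x\ud y = \|(-\pt_{xx})^{1/4}w^+\|^2\ge 0.
\]
Therefore $\frac{\ud}{\ud t}\|w^+\|^2\le 2\|c\|_{L^\8}\|w^+\|^2\le 2\|w^+\|^2$, and Gr\"onwall together with $\|w^+(0)\|=0$ forces $w^+\equiv 0$, i.e. $u\le\phi(\cdot-a)$. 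The lower bound follows verbatim by applying the same argument to $\phi(\cdot-b)-u$.

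The main obstacle is the rigorous justification of the symmetrization and of the nonlocal Kato inequality: because $w$ does not vanish but only tends to $0$ at infinity, one must ensure the double integral converges and that the manipulation $\int w^+ L_{\frac12}w\ge\|(-\pt_{xx})^{1/4}w^+\|^2$ is legitimate, which is exactly where the $O(x^{-2})$ decay from \eqref{phiasy} and the higher regularity of $v$ from \eqref{highreg} are used. As an alternative avoiding the energy method, one can pass to $z:=e^{-2t}w$, so that $\pt_t z=-L_{\frac12}z+(\,-2-c\,)z$ with $-2-c<0$, and rule out a positive interior space-time maximum directly via Lemma \ref{pp} (which gives $L_{\frac12}z\ge 0$ at a spatial maximum); this route requires in addition the uniform-in-time decay of $z$ needed to attain the maximum, which is why I prefer the energy formulation above.
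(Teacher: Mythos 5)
Your proof is correct, but it takes a genuinely different route from the paper. The paper works pointwise: setting $w=u-\phi(\cdot-b)$, it considers the first time $t^*$ at which $w$ touches zero at some point $x^*$, notes that the nonlinear terms cancel there because $w(x^*,t^*)=0$, and invokes the strict positivity of $\ptf$ at a global minimum (Lemma \ref{pp}) to get $\pt_t w|_{(x^*,t^*)}>0$, contradicting the touching --- essentially the maximum-principle argument you sketch as your ``alternative'' at the end, and it indeed carries the unstated burden you identify, namely that the touching time and touching point are attained, which needs decay of $w$ at infinity. Your main argument instead is a Stampacchia-type $L^2$ energy estimate: testing with $w^+$, symmetrizing the bilinear form, and using the pointwise Kato-type inequality $(w^+(x)-w^+(y))(w(x)-w(y))\ge (w^+(x)-w^+(y))^2$ to discard the nonlocal term, then Gr\"onwall with $|c|\le\max|f'|=1$. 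The individual steps check out: the decomposition $w=v+(\phi-\phi(\cdot-a))$ with the $O(x^{-2})$ tail from \eqref{phiasy} does put $w(\cdot,t)$ in $L^2\cap\dot H^{1/2}$ with $L_{\frac12}w\in L^2$, the case check for the Kato inequality is valid, $\int c\,w\,w^+=\int c\,(w^+)^2$, and the constant in the symmetrization matches the paper's identity $\int u\ptf u=\frac{1}{2\pi}\iint\frac{(u(x)-u(y))^2}{|x-y|^2}\ud x\ud y$. Two small points deserve a remark: since \eqref{highreg} gives regularity only for $t\ge\delta$ (and the main theorem assumes only $v_0\in H^{1/2}$), you should run the Gr\"onwall estimate on $[\delta,t]$ and let $\delta\to0$ using $v\in C([0,\8);L^2)$ and $\|w^+(\delta)\|\to\|w^+(0)\|=0$; and differentiating $\|w^+\|^2$ uses that $s\mapsto(s^+)^2$ is $C^1$, which is standard but worth stating. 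What each approach buys: yours avoids any attainment-of-extremum issue and is robust to merely $L^2$-level regularity, but it only compares against exact shifted steady states (it uses $\A\phi(\cdot-a)=0$ to cancel exactly); the paper's pointwise argument is shorter, yields strict separation via Lemma \ref{pp}, and extends directly to the sub/supersolution comparisons (e.g.\ with $N(\underline{u})\le0$) that the stability Proposition \ref{lem4.7} later requires, where your energy cancellation would instead leave a signed source term --- which is handleable, since a one-sided inequality $N(\underline u)\le 0$ contributes a term of the favorable sign when tested against $(\underline u-u)^+$, but you would need to say so.
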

\begin{proof}
We only prove the left hand side of \eqref{squ-s}.
Denote $w(x,t):=u(x,t)-\phi(x-b)$. Then we know
\begin{align*}
  &\pt_t w =-\ptf w + f(\phi(\cdot-b))-f(w+\phi(\cdot-b)),\\
  &w(\cdot, 0)\geq 0.
\end{align*}
Assume $t^*$ is the first time such that $w$ attain zero at some point $x^*$. Therefore
\begin{align*}
  &w(x,t)\geq 0 \quad \text{ for any }0\leq t\leq t^*, \, x\in \mathbb{R};\\
  &w(x^*, t^*)=0.
\end{align*}
Then at $x=x^*$, $t=t^*$
\begin{align*}
  \pt_t w |_{(x^*,t^*)}&= -\ptf w|_{(x^*,t^*)} + f(\phi(x^*-b))- f(\phi(x^*-b)+w(x^*,t^*))\\
  &= -\ptf w|_{(x^*,t^*)}
  \ge 0,
\end{align*}
where we used $w(x^*,t^*)$ is the minimum. Moreover, since $w(\pm\8)=0$, $w$ can not be a nontrivial constant. Therefore by Lemma \ref{pp} $\pt_t w |_{x^*,t^*}>0$ and we conclude $w(x,t)=u(x,t)-\phi(x-b)\geq 0$  all the time.
\end{proof}

\begin{lem}[Basic decay estimate at far field]\label{highes}
There exists a positive constant $C$ such that for any dynamic solution $u(x,t)$ to \eqref{Neqn} with initial data satisfying  \eqref{in2} and \eqref{in3},
  \begin{equation}
    |1-u(x,t)|,\,|f(u)| < \frac{C}{1+|x|}, \quad x>0, \, t>0;
  \end{equation}
    \begin{equation}
    |1+u(x,t)|,\,|f(u)| < \frac{C}{1+|x|}, \quad x<0, \, t>0.
  \end{equation}
\end{lem}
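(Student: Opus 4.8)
The plan is to read off the decay entirely from the comparison principle \eqref{squ-s} together with the explicit form $\phi(x)=\frac{2}{\pi}\arctan x$, whose barriers do not depend on $t$; this time-independence is exactly what will make the constant $C$ uniform in $t>0$. Since $\phi$ takes values in $(-1,1)$, the sandwich $\phi(x-b)\le u(x,t)\le \phi(x-a)$ already gives $u(x,t)\in(-1,1)$ for all $x$ and all $t>0$, so $u$ is uniformly bounded, and the two assertions reduce to controlling $1-u$ (resp. $1+u$) and then transferring the bound to $f(u)$.

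For the region $x>0$ I would use the upper barrier: from $u(x,t)\le\phi(x-a)<1$ and the lower barrier $u(x,t)\ge\phi(x-b)$ one gets $0<1-u(x,t)\le 1-\phi(x-b)$. Using the identity $1-\phi(y)=\frac{2}{\pi}\arctan\frac{1}{y}\le\frac{2}{\pi y}$ for $y>0$ (equivalently the far-field expansion \eqref{phiasy}), this yields $1-\phi(x-b)\le \frac{C}{1+x}$ once $x$ is large, say $x\ge 2b$. On the remaining bounded strip $0<x<2b$ one simply notes $1-u(x,t)\le 2$ while $\frac{1}{1+x}\ge\frac{1}{1+2b}$, so the estimate persists after enlarging $C$. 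This proves $|1-u(x,t)|<\frac{C}{1+|x|}$ for $x>0$, uniformly in $t$.

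The bound on $f(u)$ is then immediate: since $f(1)=0$ and $|f(u)|=\frac{1}{\pi}|\sin(\pi u)|\le|u-1|$, we have $|f(u)|\le|1-u(x,t)|<\frac{C}{1+|x|}$ on $x>0$. The case $x<0$ is entirely symmetric, using the lower barrier $u(x,t)\ge\phi(x-b)>-1$, the identity $1+\phi(y)=\frac{2}{\pi}\arctan\frac{1}{|y|}\le\frac{2}{\pi|y|}$ for $y<0$, and $|f(u)|\le|u+1|$ coming from $f(-1)=0$.

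I do not expect a genuine obstacle here: all the analytic difficulty sits in the comparison principle, which has already been established, so this lemma is a short deduction. The only points requiring a little care are to make the $\frac{1}{1+|x|}$ decay uniform across the crossover region where the shifts $a,b$ matter, which is handled by absorbing the bounded strip into the constant, and to keep $C$ independent of $t$, which holds precisely because the barriers $\phi(x-a),\phi(x-b)$ are stationary.
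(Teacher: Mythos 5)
Your proof is correct and takes essentially the same route as the paper's: both read the decay of $1\mp u$ directly off the comparison principle \eqref{squ-s} combined with the far-field behavior \eqref{phiasy} of $\phi$, and then transfer it to $f(u)$ via $|\sin(\pi u)|=|\sin(\pi(1\mp u))|\le \pi|1\mp u|$. Your additional care with the explicit $\arctan$ identity and the bounded crossover strip $0<x<2b$ merely makes explicit the constant-absorption that the paper leaves implicit.
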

\begin{proof}
From \eqref{squ-s}, we obtain the basic estimate for $u$,
\begin{align*}
  |1-u(x,t)|\leq |1-\phi(x-b)|\leq \frac{C}{1+|x|},\,\, x>0,
\end{align*}
where we use the asymptotic estimate \eqref{phiasy}. Similarly we have,
\begin{align*}
  |-1-u(x,t)|\leq \frac{C}{1+|x|},\,\, x<0.
\end{align*}

Moreover, we obtain the basic estimate for nonlinear term
\begin{align*}
|f(u)|&=\frac{1}{\pi}| \sin(\pi u)| \\
&= \frac{1}{\pi}|\sin(\pi(1-u))|= \frac{1}{\pi}|\sin(\pi(1+u))|\\
&\leq \left\{
        \begin{array}{ll}
           \frac{C}{1+|x|} , & \hbox{ for } x>0; \\
          \frac{C}{1+|x|} , & \hbox{ for } x<0.
        \end{array}
      \right.
\end{align*}
\end{proof}

\subsubsection{Compactness}
Now we turn to prove the compactness in $L^2(\mathbb{R})$,which is  the  key point to guarantee $\omega$-limit set is not empty.
\begin{lem}[Compactness]\label{lem-com}
  Assume  $u(x,t)$ is the dynamic solution to \eqref{Neqn} with initial data satisfying  \eqref{in2} and \eqref{in3}. For each $\delta>0$ the set of functions
  $$\{u(\cdot, t)-\phi(\cdot); \, t\geq \delta\}$$
  is relatively compact in $L^2(\mathbb{R}).$
\end{lem}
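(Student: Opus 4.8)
The plan is to verify the Fréchet--Kolmogorov (Kolmogorov--Riesz) compactness criterion in $L^2(\mathbb{R})$ for the family $v(\cdot,t):=u(\cdot,t)-\phi$, $t\ge \delta$. Recall that a subset of $L^2(\mathbb{R})$ is relatively compact precisely when it is (a) bounded in $L^2$, (b) uniformly tight, i.e. $\sup_{t}\int_{|x|>R}|v(x,t)|^2\ud x\to 0$ as $R\to\8$, and (c) $L^2$-equicontinuous under translation, i.e. $\sup_{t}\|v(\cdot+h,t)-v(\cdot,t)\|\to 0$ as $h\to 0$. I would establish each of the three uniformly in $t$.

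For (a) and (b) I would use the pointwise decay already at hand. By Lemma \ref{highes} together with the far-field asymptotics \eqref{phiasy} for $\phi$, one has, uniformly in $t>0$,
\[
|v(x,t)|=|u(x,t)-\phi(x)|\le |1-u(x,t)|+|1-\phi(x)|\le \frac{C}{1+|x|},\quad x>0,
\]
and symmetrically for $x<0$. Since $(1+|x|)^{-2}\in L^1(\mathbb{R})$, this single bound yields at once $\|v(\cdot,t)\|^2\le \int_{\mathbb{R}} C^2(1+|x|)^{-2}\ud x\le C$, which is (a), and $\int_{|x|>R}|v(x,t)|^2\ud x\le C/R\to 0$ uniformly in $t$, which is (b).

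For (c) I would extract a uniform $\dot H^{1/2}$ bound from the energy. The dissipation identity \eqref{dissi} shows $\F(v(t))$ is nonincreasing, hence $\F(v(t))\le \F(v_0)=\E(u_0)<\8$ by \eqref{in2}. Writing out \eqref{v-energy}, using $F\ge 0$, the uniform $L^2$ bound from Step (a), and $f(\phi)\in L^2(\mathbb{R})$ (indeed $f(\phi)=-\ptf\phi=-\tfrac{2}{\pi}\tfrac{x}{1+x^2}$ decays like $1/|x|$), one obtains
\[
\tfrac12\|v(\cdot,t)\|_{\dot H^{1/2}}^2 = \F(v(t))+\int v(\cdot,t)\, f(\phi)\ud x-\int F(v+\phi)\ud x\le \F(v_0)+\|v(\cdot,t)\|\,\|f(\phi)\|\le C,
\]
uniformly in $t$. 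Equicontinuity then follows by Plancherel and the elementary inequality $|e^{ih\xi}-1|^2\le 2|h||\xi|$:
\[
\|v(\cdot+h,t)-v(\cdot,t)\|^2= c\int_{\mathbb{R}}|e^{ih\xi}-1|^2|\hat v(\xi,t)|^2\ud\xi\le C|h|\int_{\mathbb{R}}|\xi|\,|\hat v(\xi,t)|^2\ud\xi\le C|h|\to 0
\]
as $h\to 0$, uniformly in $t$. With (a)--(c) verified, Kolmogorov--Riesz gives relative compactness of $\{v(\cdot,t):t\ge\delta\}$ in $L^2(\mathbb{R})$.

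The crux of the argument, and the step I expect to be the main obstacle, is the uniform tightness (b): this is exactly the property that fails for generic nonlocal flows, where one cannot close a modulus-of-continuity estimate as in the local or nonsingular-kernel setting. It is recovered here only through the comparison principle of the previous subsection, hence through the ordering hypothesis \eqref{in3}, combined with the sharp $1/|x|$ decay of the steady profile $\phi$. I would also emphasize that the decay estimate does double duty: the uniform $L^2$ bound it provides is precisely what renders the perturbed energy $\F$ bounded below, which is the ingredient needed to convert the monotone energy into the uniform $\dot H^{1/2}$ bound, and thence into equicontinuity. Without \eqref{in3} one controls neither the tightness nor the lower bound on $\F$, so the whole scheme rests on the comparison principle and the decay of $\phi$.
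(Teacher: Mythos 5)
Your proof is correct, and it rests on exactly the same two estimates as the paper's: the uniform-in-time far-field bound $|u(x,t)-\phi(x)|\le C/(1+|x|)$ coming from the comparison principle (Lemma \ref{highes}) together with the decay \eqref{phiasy} of $\phi$, and the uniform $\dot H^{1/2}$ bound extracted from the monotone perturbed energy, which is precisely \eqref{H12} (including the same use of $F\ge 0$, $f(\phi)\in L^2$, and the uniform $L^2$ bound). Where you diverge is the concluding compactness mechanism: the paper splits $\mathbb{R}$ into $\{|x|>K\}$, controlled by the tail bound, and $(-K,K)$, where it invokes the compact embedding $H^{1/2}(-K,K)\hookrightarrow\hookrightarrow L^2(-K,K)$ to extract a convergent subsequence; you instead verify the Kolmogorov--Riesz criterion directly, converting the $\dot H^{1/2}$ bound into translation equicontinuity via Plancherel and the elementary inequality $|e^{ih\xi}-1|^2\le 2|h||\xi|$, which is correct. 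The two routes are standard equivalents, but yours buys a little extra rigor for free: relative compactness of the whole family $\{v(\cdot,t):t\ge\delta\}$ requires either total boundedness (which Kolmogorov--Riesz delivers at once, with all three conditions verified uniformly in $t$) or a diagonal extraction over an exhaustion $K_m\to\8$, whereas the paper's proof as written fixes a single $K=K(\eps)$ and extracts one subsequence, leaving that diagonal step implicit. Your closing discussion also matches the paper's own remark after the lemma: the ordering hypothesis \eqref{in3} enters only through the far-field estimate, which simultaneously yields the tightness and the uniform $L^2$ bound that keeps $\F$ bounded below, so both proofs hinge on the comparison principle and the $1/|x|$ decay of $\phi$ in the same way.
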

\begin{proof}
  Step 1. For any $\eps>0$, from Lemma \ref{highes}, we can  choose $K$ such that for $|x|>K,\, t>0$
 \begin{align*}
& \|u-\phi\|_{L^2(|x|>K)}\\
 \leq & \|u-\phi\|_{L^2(x>K)} + \|u-\phi\|_{L^2(x<-K)}\\
 \leq & \|u-1\|_{L^2(x>K)}+\|1-\phi\|_{L^2(x>K)}+\|u+1\|_{L^2(x<-K)}+\|-1-\phi\|_{L^2(x<-K)}\\
 \leq & c \int_{|x|>K} \frac{1}{(1+|x|)^2} \ud x < \frac{\eps}{2}.
 \end{align*}

  Step 2.   Recall free energy for $v=u-\phi$
  \begin{equation}
 \F(v)= \int \frac{1}{2}|(-\Delta)^{1/4} v|^2 -vf(\phi) + F(v+\phi) \ud x
\end{equation}
   and energy identity \eqref{dissi}.
   Since $F(v+\phi)\geq 0$ and $\F(v(t))\leq \F(v_0)$, we know
   \begin{align}\label{H12}
     \int_{\mathbb{R}} \frac{1}{2}|(-\Delta)^{1/4} v|^2\ud x \leq c+ \|v\|\|f(\phi)\|\leq c,
   \end{align}
   where we also used $\|v\|\leq c$ by Lemma \ref{highes}.
   Thus the compact embedding $H^{\frac12}(-K,K)\hookrightarrow\hookrightarrow L^2(-K,K)$ shows there exists a subsequence $t_n\to +\8$ such that $u(\cdot, t_n)-\phi(\cdot) \to u^*(\cdot)-\phi(\cdot)$ in $L^2(-K, K).$
Therefore, $\lim_{n\to \8} u(x,t_n)-\phi(x)=u^*(x)-\phi$ in $L^2(\mathbb{R}).$
\end{proof}

{
\begin{rem}
It worth to notice the initial condition \eqref{in3} is only used to obtain the uniform in time estimate for $u$ at far field.
As we have seen in the proof of Lemma \ref{lem-com}, the compactness result can be achieved as long as we have the uniform in time $L^2$ bound.
It is another possible way to relax the initial condition \eqref{in3}.
\end{rem}
}

\subsection{Stability and Uniform Convergence}\label{sec2.4}
We have obtained the compactness in $L^2$ and the characterization of $\omega$-limit set in previous preparations. Therefore, (i) we can first extract a  sequence $u(x,t_n)-\phi(x)$ with vanishing dissipation $\Q(t_n)$ by Lemma \ref{DecayQ}, (ii) then by compactness Lemma \ref{lem-com} $u(x,t_n)-\phi(x)$ possesses further a subsequence such that the limit of $u(x, t_{n_k})-\phi(x)$ is in $\omega(v)$, in other words, for any $v_0\in H^{\frac12}(\mathbb{R})$, $\omega(v)\neq \emptyset$. However those properties are only for some subsequence $t_n$. In this section, we are finally in the position to obtain the uniform convergence by proving the dynamic solution will stay close to the standing profile for all large time.
  First we list some properties for the double well function $F(x).$

Since $f'(\pm 1)>0$,
there exist $\mu>0$,
$\delta>0$ such that for $0<q<\frac{\delta}{2}$,
\begin{equation}\label{con_mu}
f(\phi)-f(\phi-q)\geq \mu q\quad \text{ for  }1-\delta \leq \phi \leq 1 \text{ or }-1 \leq \phi \leq -1+\delta.
\end{equation}
Moreover, for $\phi\in[-1+\delta, 1-\delta]$, there exist $k>0$, $\beta\geq 0$ such that
\begin{equation}\label{con_k}
 |f(\phi-q)-f(\phi)|\leq kq \quad \text{ for any } 0<q<\frac{\delta}{2},
\end{equation}
 and
\begin{equation}\label{con_beta}
  \phi'(x) \geq \beta>0 \quad \text{for } x \text{ such that } \phi(x)\in[-1+\delta, 1-\delta].
\end{equation}

\begin{prop}[Stability]\label{lem4.7}
  Assume $u(x,t)$ is a dynamic solution to \eqref{Neqn} and for any $0<\eps< \frac{\delta}{2}$ there exists $N$ such that
  $$\sup_{x\in \mathbb{R}}|u(\cdot, t_N)-\phi(\cdot-x_0)|<\eps.$$
  Then for any $t>t_N$,
  there exists $C$ such that
   $$\sup_{x\in\mathbb{R}}|u(x, t)-\phi(x-x_0)|<C\eps.$$
   Moreover
\begin{equation}\label{squ}
  \phi(x-x_0 - \frac{\mu+k}{\mu \beta}\eps) - \eps e^{-\mu (t-t_N)} \le u(x,t)
 \le \phi(x-x_0 +\frac{\mu+k}{\mu \beta}\eps) + \eps e^{-\mu (t-t_N)}, \quad \forall x \in  \mathbb R, t > t_N.
\end{equation}
\end{prop}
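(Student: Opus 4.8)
The plan is to prove both inequalities in \eqref{squ} by constructing explicit super- and sub-solutions that dominate (resp.\ are dominated by) $u$ at the initial time $t_N$, and then invoking a comparison principle. For the upper bound I would take the ansatz
\begin{equation*}
\bar u(x,t):=\phi\big(x-x_0+\xi(t)\big)+q(t),\qquad q(t):=\eps e^{-\mu(t-t_N)},
\end{equation*}
where $\xi(t)$ is a nonnegative, increasing shift to be determined with $\xi(t_N)=0$. At $t=t_N$ this gives $\bar u(x,t_N)=\phi(x-x_0)+\eps$, which by hypothesis dominates $u(\cdot,t_N)$; the sub-solution is the mirror image $\underline u(x,t):=\phi(x-x_0-\xi(t))-q(t)$.

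First I would reduce the super-solution condition to a pointwise differential inequality. Writing $z:=x-x_0+\xi(t)$ and using translation invariance of $\ptf$, that the constant $q(t)$ is annihilated by $\ptf$, and $\A\phi=0$ from \eqref{steadyPN} (so $\ptf\phi=-f(\phi)$), one computes
\begin{equation*}
\pt_t\bar u+\ptf\bar u+f(\bar u)=\phi'(z)\,\xi'(t)+q'(t)+\big[f(\phi(z)+q(t))-f(\phi(z))\big],
\end{equation*}
and I must show this is $\ge 0$. I would split into the two regions governed by the hypotheses \eqref{con_mu}--\eqref{con_beta}. Where $\phi(z)$ lies in a well, i.e.\ $\phi(z)\in[1-\delta,1]$ or $[-1,-1+\delta]$, monotonicity of $\phi$ gives $\phi'(z)\xi'(t)\ge 0$ and \eqref{con_mu} gives $f(\phi(z)+q)-f(\phi(z))\ge\mu q$, so the right-hand side is at least $q'(t)+\mu q(t)=0$ by the choice of $q$. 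Where $\phi(z)$ lies in the transition region $[-1+\delta,1-\delta]$, I would use $\phi'(z)\ge\beta$ from \eqref{con_beta} and $f(\phi(z)+q)-f(\phi(z))\ge -kq$ from \eqref{con_k}, so the right-hand side is at least $\beta\xi'(t)+q'(t)-kq(t)$; requiring this to be nonnegative forces $\xi'(t)\ge\frac{\mu+k}{\beta}q(t)$. Choosing equality gives $\xi(t)=\frac{\mu+k}{\mu\beta}\eps\big(1-e^{-\mu(t-t_N)}\big)$, which is nonnegative, increasing, and bounded by $\frac{\mu+k}{\mu\beta}\eps$; this is exactly the shift appearing in \eqref{squ}.

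Finally I would apply a comparison principle to $\bar u$ and $u$. Since $\bar u$ is only a super-solution, this requires the comparison argument from Section \ref{sec2.3} in its super-/sub-solution form: set $w:=\bar u-u$, note $w(\cdot,t_N)>0$ and $w\to q(t)>0$ as $x\to\pm\infty$, and suppose $t^*$ is the first time $w$ touches $0$ at a finite point $x^*$. There $w(\cdot,t^*)$ attains a nonconstant global minimum, so $f(\bar u)=f(u)$ at $(x^*,t^*)$ and $\pt_t w\ge-\ptf w>0$ by Lemma \ref{pp}, contradicting $\pt_t w(x^*,t^*)\le0$. Hence $u\le\bar u$ for all $t\ge t_N$, and since $\phi$ is increasing with $\xi(t)\le\frac{\mu+k}{\mu\beta}\eps$ this yields the upper bound in \eqref{squ}; the lower bound is symmetric. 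The uniform estimate $\sup_x|u-\phi(\cdot-x_0)|<C\eps$ then follows by sandwiching and using that $\phi$ is Lipschitz ($\|\phi'\|_\infty=\frac{2}{\pi}$), giving $C=1+\frac{2}{\pi}\frac{\mu+k}{\mu\beta}$. The main obstacle I anticipate is the bookkeeping at the interfaces between the two regions—ensuring $\phi(z)+q(t)$ stays within the ranges where \eqref{con_mu} and \eqref{con_k} are valid for all $0<q\le\eps<\frac{\delta}{2}$ (a relabeling of the argument of $f$ is needed)—together with upgrading the comparison principle so that it admits the constructed super-/sub-solutions rather than two genuine solutions.
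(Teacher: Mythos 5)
Your proposal is correct and follows essentially the same route as the paper: the same barrier ansatz $\phi(x-x_0\pm\xi(t))\pm q(t)$ with $q(t)=\eps e^{-\mu(t-t_N)}$, the same splitting into well and transition regions via \eqref{con_mu}, \eqref{con_k}, \eqref{con_beta}, the same ODE $\xi'=\frac{\mu+k}{\beta}q$ yielding the shift $\frac{\mu+k}{\mu\beta}\eps$, and the same constant $C=1+\frac{2}{\pi}\frac{\mu+k}{\mu\beta}$. The only deviations are cosmetic: the paper truncates the subsolution as $\max\{-1,\phi-q\}$ to stay within the stated range of \eqref{con_mu} (where you instead invoke the two-sided positivity of $f'$ near $\pm1$, which is fine), and you spell out the touching-point comparison for super/subsolutions that the paper invokes implicitly from Section \ref{sec2.3}.
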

\begin{proof}
%From the condition,
%  for any $0<\eps< \frac{\delta}{2}$, there exists $N$ large enough such that
%  $$|u(\cdot, t_N)-\phi(\cdot-x_0)|<\eps.$$
 We will use comparison principle to prove for $t>t_N$ the solution still stay close to $\phi(x-x_0).$
First we prove the lower bound for $u$. Notice
  \begin{equation}\label{q0}
    \phi(x-x_0)-\eps\leq u(x,t_N)\quad \text{ for any }x\in \mathbb{R}.
  \end{equation}
  We  construct a subsolution
  \begin{equation}
    \underline{u}(x,t):=\max\{-1, \phi(x-\xi(t))-q(t)\}\in [-1,1]
  \end{equation}
  by choosing $\xi(t)$ and $q(t)$ such that $q(t):=\eps e^{-\mu (t-t_N)}$, $\xi(t):=c_1+c_2e^{-\mu (t-t_N)}$ with $c_1=x_0-c_2$ and $c_2<0$ to be determined.

Define
\begin{equation}
  N(u):= \pt_t u  + \A u = \pt_t u  +\ptf u + f(u)
\end{equation}
and divide $[-1,1]$ into several sets
$$
  I_1:=\{(x,t); ~\phi(x-\xi(t)) \in [-1, -1+q(t)] \},
  $$
  $$
  I_2:=\{(x,t); ~\phi(x-\xi(t)) \in [-1+q(t), -1+\delta] \},
  $$
  $$
  I_3:=\{(x,t); ~\phi(x-\xi(t)) \in [-1+\delta, 1-\delta] \},
  $$
  $$
  I_4:=\{(x,t); ~\phi(x-\xi(t)) \in [1-\delta, 1] \}.
  $$

(1) If $(x,t)\in I_1$, then $\phi(x-\xi)-q(t)\leq -1$ and $N(\underline{u})=0.$

(2) If $(x,t)\in I_2,$
since $\A\phi=0$, $\xi'\geq 0$ and \eqref{con_mu}, we know
\begin{align*}
  N(\underline{u})&= -\phi'(x-\xi(t)) \xi' -q' +\ptf \phi(x-\xi(t)) +f(\phi(x-\xi(t))-q)\\
&=-\phi'(x-\xi(t))\xi' - q' - f(\phi(x-\xi(t))) + f(\phi(x-\xi(t))-q)\\
&\leq -\phi'(x-\xi(t)) \xi' -q' -\mu q \,\,\\
&\leq -q'-\mu q=0.
\end{align*}
The situation for $(x,t)\in I_4$ is exactly same.

(3) For $(x,t)\in I_3$, i.e. $-1+\delta \leq \phi(x-\xi(t)) \leq 1-\delta$, from \eqref{con_beta} and \eqref{con_k} we know
\begin{align*}
  N(\underline{u})&\leq -\phi'(x-\xi(t)) \xi' -q' + kq\\
&\leq -\beta \xi' - q' + kq.
\end{align*}
Set $\xi' = \frac{-q'+kq}{\beta}=\frac{\mu+k}{\beta}q>0,$ we have
$$c_1=x_0-c_2,\quad c_2=-\frac{\mu+k}{\mu\beta} \eps.$$

Then $N(\underline{u})\leq 0$ and $\underline{u}$ is a subsolution satisfying
\begin{equation}
  u(x,t)\geq \underline{u}(x,t) \geq \phi(x-\xi(t))-q(t)
\end{equation}
due to comparison principle.
%Indeed, if the solution is smooth, then for $v=u-\underline{u}$ we have $v(0)=u(0)-\underline{u}(0)\geq 0$ and
%$$\pt_t v \geq -\ptf v+f(u)-f(\overline{u}).$$
%Then if $x_b$ is the first time $v$ attains its zero,
%$$\pt_t v |_{x_b}\geq -\ptf v+f(u)-f(\overline{u})|_{x_b}\geq 0$$
%which implies $v\geq 0$ all the time.
%Now taking $x_1^*:=\xi(+\8)=c_1=x_0+\frac{\mu+k}{\mu\beta} \eps$ we obtain the LHS of \eqref{squ-es} and we know
%\begin{equation}\label{tmtm}
%  |x_1^*-x_0|=|c_2|=\frac{\mu+k}{\mu\beta} \eps.
%\end{equation}
Therefore we have
\begin{equation}
  \phi(x-x_0 - \frac{\mu+k}{\mu \beta}\eps) - \eps e^{-\mu (t-t_N)} \le u(x,t), \quad \forall x \in  \mathbb R, t > t_N.
\end{equation}

Similarly, we can obtain the upper bound for $u$
\begin{equation}
 u(x,t) \le \phi(x-x_0 +\frac{\mu+k}{\mu \beta}\eps) + \eps e^{-\mu (t-t_N)}, \quad \forall x \in  \mathbb R, t > t_N.
\end{equation}
Hence we know
\begin{align*}
    |u(x,t)-\phi(x-x_0)| \le \max_{x\in\mathbb{R}}\phi'(x)\cdot \frac{\mu+k}{\mu \beta}\eps + \eps , \quad \forall x \in  \mathbb R, t > t_N,
\end{align*}
which concludes for $C=1+\frac{2}{\pi}\frac{\mu+k}{\mu \beta}$ we have
   $$\sup_{x\in \mathbb{R}}|u(x, t)-\phi(x-x_0)|<C\eps$$
   for any $t>t_N$.
\end{proof}

After all the preparations above, we can first extract a time sequence $t_n$ with vanishing dissipation $\Q(t_n)$ by Lemma \ref{DecayQ} and then by compactness Lemma \ref{lem-com} we can further extract a subsequence such that the limit of $u(x, t_n)-\phi(x)$ is in $\omega(v)$. Moreover, $u(x, t)-\phi(x)$ will stay close to its limit for any $t$ large enough.

\begin{thm}[Uniform Convergence]\label{Uni}
Assume  $u(x,t)$ is the dynamic solution to \eqref{Neqn} with initial data satisfying  \eqref{in2} and \eqref{in3}.
  Then there exists a value $x_0$ such that
  \begin{equation}\label{uni-con}
  \lim_{t\to + \8} b(t) = 0, \quad b(t):=\max_{x\in \mathbb{R}} |u(x,t)-\phi(x-x_0)|.
  \end{equation}
\end{thm}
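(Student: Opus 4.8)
The plan is to combine the three ingredients assembled in this section—vanishing dissipation, compactness, and the characterization of the $\omega$-limit set—to produce convergence along a subsequence, and then to invoke the stability estimate of Proposition \ref{lem4.7} to upgrade this to convergence for all large times. The key structural observation is that the shift $x_0$ is determined once and for all by the first subsequential limit; stability then propagates closeness to $\phi(\cdot - x_0)$ forward in time, so no reselection of $x_0$ is ever required.

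First I would verify that $\F(v(t))$ is bounded from below, which is the hypothesis needed to apply Lemma \ref{DecayQ}. Since $F\geq 0$ and the seminorm term is nonnegative, Cauchy--Schwarz gives $\F(v)\geq -\|v\|\,\|f(\phi)\|$, and the comparison principle together with Lemma \ref{highes} supplies the uniform bound $\|v(t)\|\leq c$ (with $f(\phi)\in L^2(\mathbb{R})$), so $\F$ is indeed bounded below. By Lemma \ref{DecayQ} there is a sequence $t_n\to+\8$ with $\Q(t_n)\to 0$. Applying the compactness Lemma \ref{lem-com} along this sequence yields a further subsequence (not relabeled) with $u(\cdot,t_n)-\phi(\cdot)\to v^*$ in $L^2(\mathbb{R})$; the dissipation still vanishes along it, so $v^*\in\omega(v)$ in the sense of Definition \ref{def-o}, and in particular $\omega(v)\neq\emptyset$. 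Proposition \ref{stand} then identifies $v^*(x)=\phi(x-x_0)-\phi(x)$ for some fixed $x_0$, and upgrades the convergence to $L^2(\mathbb{R})\cap L^\8(\mathbb{R})$, i.e. $\sup_x|u(x,t_n)-\phi(x-x_0)|\to 0$.

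With $x_0$ now fixed, I would upgrade to uniform convergence as follows. Fix any $0<\eps<\frac{\delta}{2}$. By the $L^\8$ convergence just obtained, choose a time $t_N$ from the subsequence with $\sup_x|u(x,t_N)-\phi(x-x_0)|<\eps$. Proposition \ref{lem4.7} then gives, for every $t>t_N$, the bound $b(t)=\sup_x|u(x,t)-\phi(x-x_0)|<C\eps$ with $C=1+\frac{2}{\pi}\frac{\mu+k}{\mu\beta}$ independent of $\eps$; concretely, the sandwich \eqref{squ} traps $u$ from both sides between the shifted profiles $\phi(\cdot-x_0\mp\frac{\mu+k}{\mu\beta}\eps)$ up to an exponentially small error. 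Since $C$ is a fixed constant and $\eps$ is arbitrary, this is precisely the statement that $\lim_{t\to+\8}b(t)=0$.

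The main obstacle, and the reason the argument must be organized in this order, is that the vanishing-dissipation property of Definition \ref{def-o} holds only along the extracted subsequence and not for all $t$, so one cannot characterize a generic limit point directly nor run a naive $\omega$-limit compactness argument. The role of the stability Proposition \ref{lem4.7} is exactly to bridge this gap: it converts a single instant of $L^\8$-closeness to $\phi(\cdot-x_0)$ into closeness for all later times, with no loss of the shift. A secondary point to check carefully is that the \emph{uniform} (not merely $L^2$) convergence along the subsequence is genuinely available; this is furnished by the Ladyzhenskaya interpolation in Proposition \ref{stand}(i), which rests on the uniform $\dot H^1$ bound \eqref{tm-H1}, itself a consequence of the vanishing dissipation---yet another reason the dissipation-vanishing subsequence is indispensable.
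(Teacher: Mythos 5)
Your proposal is correct and follows essentially the same route as the paper's own proof: extract a vanishing-dissipation sequence via Lemma \ref{DecayQ}, pass to a further subsequence by the compactness Lemma \ref{lem-com} so that the limit lies in $\omega(v)$, identify it as $\phi(\cdot-x_0)-\phi$ with $L^\infty$ convergence via Proposition \ref{stand}, and then propagate closeness to all later times by the stability Proposition \ref{lem4.7}. Your explicit $\eps$--$C\eps$ bookkeeping at the end, and the verification that $\F$ is bounded below via $\|v(t)\|\leq c$ and Cauchy--Schwarz, merely spell out steps the paper leaves implicit.
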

\begin{proof}
Recall $v(x,t)=u(x,t)-\phi(x)$ with the free energy
\begin{equation*}
 \F(v)= \frac{1}{2}\int |(-\Delta)^{1/4} v|^2 -vf(\phi) + F(v+\phi) \ud x.
\end{equation*}
Then by Lemma \ref{highes} we know $\|v\|\leq c$ and thus $\F(v)$ is bounded from below. Therefore, combining energy identity \eqref{dissi} and Lemma \ref{DecayQ} leads to a vanishing sequence for $\Q$, i.e.
there exists a time sequence $t_n \to +\8$ such that
\begin{equation}
  \Q(t_n)=-\F'(t_n) \to 0.
\end{equation}
For such a sequence $t_n$, from Lemma \ref{lem-com} we know
$$\{u(\cdot, t_n)-\phi(\cdot), \quad t_n\geq \delta\}$$
is relative compact in $L^2(\mathbb{R}).$ Therefore we know the $\omega$-limit set $\omega(v)\neq \emptyset$ and the limit of the subsequence (still denote as $t_n$) $v(x,t_n)=u(x,t_n)-\phi(x)\to v^*$ can be characterized by Proposition \ref{stand} (iv), i.e. $v^*(x)=\phi(x-x_0)-\phi(x)$ and thus
$$u(x,t_n)-\phi(x-x_0)=v(x, t_n)+\phi(x) -\phi(x-x_0) \to 0 \quad \text{ in }L^\8(\mathbb{R}).$$

 Next, from the stability Proposition \ref{lem4.7}, we conclude the uniform convergence \eqref{uni-con}.
\end{proof}

\section{Spectral decomposition for linearized nonlocal Schr\"odinger operator }\label{sec3}

In this section, we will study detailed structures for spectrum of linearized nonlocal Schr\"odinger operator and prove the spectral gap in Proposition \ref{gap}.
Note $f'(\phi)= -\cos(\pi \phi) = \frac{x^2 -1}{x^2+1}.$ The linearized operator along the steady profile $\phi$
is $L: D(L)\subset L^2 \to L^2$ with
\begin{equation} \label{L1}
 L u := \ptf  u + \frac{x^2 -1}{x^2+1} u.
\end{equation}

Denote $\sigma_p, \, \sigma_r$ and $\sigma_c$ as the point spectrum, the residual spectrum and the continuous spectrum separately.
Then
$$\mathbb{C}=\rho(L)\cup\sigma(L)=\rho(L)\cup\sigma_p(L)\cup\sigma_c(L)\cup\sigma_r(L).$$
We will first prove there is no residual spectrum and all the continuous spectrum locate in $[1, +\8)$, see Proposition \ref{prop-s1} and Proposition \ref{prop-s2} separately. Although the proof is standard but for completeness we put them in Appendix \ref{app-s1} and Appendix \ref{app-s2}.
\begin{prop}\label{prop-s1}
For linear operator $L$ in \eqref{L1}, the spectrum $\sigma(L)=\sigma_p(L)\cup\sigma_c(L) \subset [-1, +\8)$.
\end{prop}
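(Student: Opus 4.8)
The plan is to exploit the fact that $L$ is a bounded self-adjoint perturbation of the nonnegative self-adjoint operator $\ptf$, which immediately forces $\sigma_r(L)=\emptyset$ and $\sigma(L)\subset\mathbb{R}$, and then to pin down the lower bound $-1$ via the variational characterization of the bottom of the spectrum.

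\medskip
First I would set up the operator carefully. Writing the potential as
\begin{equation}
V(x):=\frac{x^2-1}{x^2+1}=1-\frac{2}{1+x^2},
\end{equation}
we see that $V$ is real-valued, smooth, and bounded with range $[-1,1)$; in particular $\inf_{x\in\mathbb{R}}V(x)=-1$, attained only at $x=0$, and $V(x)\to 1$ as $x\to\pm\8$. Since $\ptf$ is the Fourier multiplier with real symbol $|\xi|$, it is self-adjoint on its natural domain $D(\ptf)=H^1(\mathbb{R})$ and nonnegative. Multiplication by the bounded real function $V$ is a bounded self-adjoint operator on $L^2(\mathbb{R})$. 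Hence by the Kato--Rellich theorem (a bounded symmetric perturbation preserves self-adjointness), $L=\ptf+V$ is self-adjoint with $D(L)=H^1(\mathbb{R})$.

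\medskip
Next I would read off the two desired conclusions. Self-adjointness gives at once that the residual spectrum is empty and that $\sigma(L)\subset\mathbb{R}$, so $\sigma(L)=\sigma_p(L)\cup\sigma_c(L)$. For the lower bound, I would compute the quadratic form: for every $u\in H^1(\mathbb{R})$,
\begin{equation}
\la Lu,u\ra=\big\|(-\pt_{xx})^{\frac14}u\big\|^2+\int_{\mathbb{R}}V(x)|u|^2\ud x\geq 0+(\inf V)\|u\|^2=-\|u\|^2,
\end{equation}
using $\ptf\geq 0$ and $V\geq-1$. Since $L$ is self-adjoint, the bottom of its spectrum equals $\inf_{\|u\|=1}\la Lu,u\ra\geq-1$, so $\sigma(L)\subset[-1,+\8)$, which completes the argument.

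\medskip
There is no serious obstacle here; the content is essentially the elementary bound $V\geq-1$ together with standard self-adjoint perturbation theory. The only points requiring a little care are confirming that the form domain is exactly $H^1(\mathbb{R})$ and that adding the bounded potential does not enlarge the domain, and invoking the variational formula for the infimum of the spectrum of a self-adjoint operator in place of a direct resolvent estimate. This is why the proof is deferred to the appendix and only stated here.
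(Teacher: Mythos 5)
Your proposal is correct, but it reaches the lower bound by a different piece of machinery than the paper. Both arguments hinge on the identical key inequality $L\geq -I$, i.e.\ $\la Lu,u\ra=\|(-\pt_{xx})^{1/4}u\|^2+\int V|u|^2\geq -\|u\|^2$ coming from $\ptf\geq 0$ and $\inf V=-1$; the difference is how that inequality is converted into a statement about $\sigma(L)$. You invoke the variational characterization $\inf\sigma(L)=\inf_{\|u\|=1}\la Lu,u\ra$ for a semibounded self-adjoint operator, which imports the spectral theorem but makes the conclusion one line. The paper (Lemmas \ref{B1}--\ref{B3} and Lemma \ref{lem2.1} in Appendix \ref{app-s1}) stays more elementary and self-contained: it proves the resolvent-type lower bounds $\|(\lambda I-L)u\|\geq |b|\,\|u\|$ for $\lambda=a+bi$, $b\neq 0$, and $\|(\lambda I-L)u\|\geq |\lambda+1|\,\|u\|$ for $\lambda<-1$ (the latter using exactly your form bound in the guise $\la (L+I)u,u\ra\geq 0$), then shows $\ran(\lambda I-L)$ is closed and uses the self-adjoint duality $\kerr(\lambda I-L)=\ran(\lambda I-L)^{\bot}$ to upgrade injectivity-with-lower-bound to bijectivity, so that $\lambda\in\rho(L)$ directly; point and continuous spectrum below $-1$ are then excluded by the same estimates. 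Your route buys brevity; the paper's buys a proof that never appeals to the spectral theorem and whose intermediate estimates are reused elsewhere (e.g.\ the same bounds with the potential shifted give $\sigma(A)\subset[1,+\8)$ in Lemma \ref{lem-sc}). One cosmetic correction: the form domain of $\ptf$ is $H^{1/2}(\mathbb{R})$, not $H^1(\mathbb{R})$; $H^1(\mathbb{R})$ is the operator domain. Since you only evaluate the form on $u\in D(L)=H^1(\mathbb{R})$, and the variational formula may be taken over the operator domain, this slip does not affect the argument.
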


\begin{prop}\label{prop-s2}
For linear operator $L$ in \eqref{L1}, the continuous spectrum $\sigma_c(L)\subset [1, +\8)$ .
\end{prop}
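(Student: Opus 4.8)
The plan is to realize $L$ as a relatively compact perturbation of a translation-invariant operator whose spectrum is explicitly $[1,+\8)$, and to rule out continuous spectrum below $1$ by a singular (Weyl) sequence argument. Write $V(x):=\frac{x^2-1}{x^2+1}=1-\frac{2}{x^2+1}$ and split
\begin{equation*}
L = L_0 + W, \qquad L_0:=\ptf + 1, \qquad W(x):=V(x)-1=-\frac{2}{x^2+1}.
\end{equation*}
The operator $L_0$ has, by Fourier transform, symbol $|\xi|+1$, so $\sigma(L_0)=\sigma_c(L_0)=[1,+\8)$; moreover for $\lambda<1$ one has the uniform lower bound $\|(L_0-\lambda)u\|\ge(1-\lambda)\|u\|$ since $|\xi|+1-\lambda\ge 1-\lambda>0$. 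The perturbation $W$ is a bounded multiplication operator ($|W|\le 2$) that decays like $|x|^{-2}$ at infinity. Since $\sigma_c(L)$ is contained in the essential spectrum, it suffices to show that no $\lambda<1$ belongs to $\sigma_c(L)$.

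First I would record that $L$ is self-adjoint on $D(L)=H^1(\mathbb{R})$ (as $\ptf$ is self-adjoint and $V$ is real and bounded), so by Proposition \ref{prop-s1} there is no residual spectrum and $L-\lambda$ has dense range for every $\lambda$. Suppose now $\lambda<1$ lies in $\sigma_c(L)$. Then $L-\lambda$ is injective with dense but non-closed range, hence it is not bounded below, so there exist $u_n\in D(L)$ with $\|u_n\|=1$ and $\|(L-\lambda)u_n\|\to 0$. Passing to a subsequence, $u_n\rightharpoonup u$ weakly in $L^2$; testing against $\varphi\in D(L)$ gives $\langle u,(L-\lambda)\varphi\rangle=\lim\langle(L-\lambda)u_n,\varphi\rangle=0$, and density of the range forces $u=0$, so $u_n\rightharpoonup 0$. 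Next, from $\|\ptf u_n\|\le\|(L-\lambda)u_n\|+(\|V\|_{L^\8}+|\lambda|)\|u_n\|$ the sequence $\{u_n\}$ is bounded in $H^1(\mathbb{R})$.

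The key step is to prove $\|W u_n\|\to 0$, i.e. that $W$ is $L_0$-relatively compact. Given $\eps>0$, choose $R$ with $|W(x)|<\eps$ for $|x|>R$; on $[-R,R]$ the compact embedding $H^1(-R,R)\hookrightarrow\hookrightarrow L^2(-R,R)$ together with $u_n\rightharpoonup 0$ gives $\|u_n\|_{L^2(-R,R)}\to 0$, while on $|x|>R$ one bounds $\|W u_n\|_{L^2(|x|>R)}\le\eps\|u_n\|\le\eps$. Hence $W u_n\to 0$ strongly in $L^2$, and therefore
\begin{equation*}
\|(L_0-\lambda)u_n\|\le\|(L-\lambda)u_n\|+\|W u_n\|\to 0.
\end{equation*}
This contradicts $\|(L_0-\lambda)u_n\|\ge(1-\lambda)\|u_n\|=1-\lambda>0$. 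Thus no $\lambda<1$ is in $\sigma_c(L)$, which proves $\sigma_c(L)\subset[1,+\8)$. The main obstacle is the relative-compactness claim for $W$: it is where the decay of the potential and the local compact Sobolev embedding are combined, and it is the only place the fractional nature of $\ptf$ requires care, since one must verify that the $H^1$ bound obtained from boundedness of $V$ is strong enough to feed the Rellich argument. Everything else is a routine consequence of self-adjointness and the explicit symbol of $L_0$.
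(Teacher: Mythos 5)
Your proof is correct, and while it starts from the same decomposition the paper uses, it replaces the paper's main tool by a more elementary one. The paper (Appendix \ref{app-s2}, Lemma \ref{lem-sc}) also splits $L=A+C$ with $A=\ptf+I$ and $C=q(x)-1=-\frac{2}{x^2+1}$, but it proves that $C(A+i)^{-1}$ is compact and then invokes the abstract stability theorem for the essential spectrum under relatively compact perturbations (Reed--Simon, Theorem XIII.14), supplemented by an explicit computation showing $\sigma(A)=[1,+\8)$ (including the non-trivial inclusion $[1,+\8)\subset\sigma(A)$, proved by exhibiting a Gaussian-type $f$ for which $(\lambda I-\ptf)u=f$ is unsolvable). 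You instead run a direct singular (Weyl) sequence contradiction: a $\lambda<1$ in $\sigma_c(L)$ yields normalized $u_n$ with $\|(L-\lambda)u_n\|\to 0$, weak convergence to zero via self-adjointness and density of the range, an $H^1$ bound from $\|\ptf u_n\|=\|u_n\|_{\dot H^1}$, strong vanishing of $Wu_n$, and then the symbol bound $\|(L_0-\lambda)u_n\|\ge(1-\lambda)$ gives the contradiction. The analytic core is identical in both arguments --- the two-region splitting combining decay of the potential outside $[-R,R]$ with the compact embedding $H^1(-R,R)\hookrightarrow\hookrightarrow L^2(-R,R)$ is exactly the paper's steps (1.a)--(1.b) --- but your version is self-contained and avoids the external theorem, at the cost of proving only the inclusion $\sigma_c(L)\subset[1,+\8)$ rather than the sharper facts $\sigma_{ess}(L)=\sigma(A)=[1,+\8)$; note the paper reuses $\sigma(A)=[1,+\8)$ in Appendix \ref{App_A} in the proof of Theorem \ref{strongslu}, so your argument could not substitute for Lemma \ref{lem-sc} wholesale, though it fully suffices for the stated proposition. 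Two small points you should make explicit when writing this up: the deduction $\|u_n\|_{L^2(-R,R)}\to 0$ from $u_n\rightharpoonup 0$ requires the standard subsequence-of-subsequence argument through Rellich's theorem, and the restriction to real $\lambda$ is justified by Proposition \ref{prop-s1}, which places all of $\sigma(L)$ in $[-1,+\8)$.
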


Next proposition is the key procedure to prove $0$ is the principle eigenvalue and there is no other kinds of spectra  near zero. The proof is standard contradiction argument but it takes advantage of  strict positivity property at global minima and global maxima for nonlocal operator (see Lemma \ref{pp}), which allow us to construct a sequence of eigenfunctions with minimal points locating in the concave part of double well potential $F$.
\begin{prop}\label{prop-s3}
For linear operator $L$ in \eqref{L1}, the point spectrum $\sigma_p(L)\subset [0, +\8)$ and $0$ is simple eigenvalue with eigenfunction $\phi'(x)$.
\end{prop}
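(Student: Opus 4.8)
The plan is to establish the three assertions in turn: that $0$ is an eigenvalue with eigenfunction $\phi'$, that no eigenvalue lies below $0$, and that the zero eigenspace is one dimensional. First I would verify $0\in\sigma_p(L)$ by differentiating the steady equation. Since $\A\phi=\ptf\phi+f(\phi)=0$ and $\ptf$ commutes with $\pt_x$ by translation invariance, differentiating in $x$ and using $\pt_x f(\phi)=f'(\phi)\phi'$ yields $\ptf\phi'+f'(\phi)\phi'=L\phi'=0$. Here $\phi'(x)=\frac{2}{\pi(1+x^2)}\in H^1(\mathbb{R})=D(L)$ is strictly positive, so $0$ is an eigenvalue with a strictly positive eigenfunction. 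This positivity of the candidate ground state is the structural fact that drives everything else.

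Next I would locate the bottom of the spectrum. Since $V(x):=f'(\phi)=\frac{x^2-1}{x^2+1}$ is bounded and real, $L$ is self-adjoint, and by Proposition \ref{prop-s1} and Proposition \ref{prop-s2} we have $\inf\sigma(L)\in[-1,0]$ (the upper bound holds because $0$ is already an eigenvalue) while the continuous spectrum lies in $[1,+\8)$. Thus $\lambda_0:=\inf\sigma(L)$ sits strictly below the essential spectrum and is therefore an isolated eigenvalue of finite multiplicity; I take a corresponding real eigenfunction $\psi$, which is smooth and decays to $0$ at $\pm\8$ by the regularizing properties of $L_{1/2}$ together with $\lambda_0<1$.

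The heart of the argument is to show this bottom eigenfunction is strictly positive. By the fractional diamagnetic inequality $\iint\frac{(|\psi(x)|-|\psi(y)|)^2}{|x-y|^2}\le\iint\frac{(\psi(x)-\psi(y))^2}{|x-y|^2}$ (together with $V|\psi|^2=V\psi^2$ pointwise), $|\psi|$ has Rayleigh quotient no larger than that of $\psi$, hence equal to $\lambda_0$, so $|\psi|\ge 0$ is itself an eigenfunction for $\lambda_0$. If $|\psi|$ vanished at some $x_0$, then $x_0$ is a global minimum of $|\psi|$, and Lemma \ref{pp} forces $\ptf|\psi|(x_0)<0$, contradicting the eigenvalue equation $\ptf|\psi|(x_0)=(\lambda_0-V(x_0))|\psi|(x_0)=0$; hence $|\psi|>0$ everywhere and $\psi$ is sign definite. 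Since two strictly positive functions cannot be orthogonal, while eigenfunctions for distinct eigenvalues of a self-adjoint operator must be orthogonal, comparing $\psi$ with the positive kernel element $\phi'$ forces $\lambda_0=0$; this gives $\sigma_p(L)\subset[0,+\8)$. Running the same positivity argument on any nonzero $\psi\in\ker L$ (now automatically a ground state since $\lambda_0=0$) shows every kernel element is sign definite, so $\ker L$ cannot contain an element orthogonal to $\phi'$, whence it is one dimensional and spanned by $\phi'$. This is also the mechanism behind the paper's heuristic about minimal points in the concave part of $F$: the potential $V$ is negative precisely on $\{\phi\in(-\tfrac12,\tfrac12)\}=\{|x|<1\}$, and Lemma \ref{pp} pins any sign change of a kernel element to that region.

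The main obstacle is exactly the strict positivity of the ground state: because $\ptf$ is nonlocal, the classical strong maximum principle is unavailable, and it is Lemma \ref{pp} — strict positivity of $\ptf$ at a global extremum — that plays its role. The secondary technical points are justifying the diamagnetic inequality for the singular kernel $|x-y|^{-2}$ and confirming enough regularity and decay of $\psi$ to make the pointwise evaluation of $\ptf\psi$ at an extremum legitimate; both follow from the smoothing properties of $L_{1/2}$ and the fact that $\lambda_0$ lies strictly below $\sigma_c(L)$.
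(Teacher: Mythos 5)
Your proposal is correct in substance but follows a genuinely different route from the paper. The paper never invokes the variational characterization of $\inf\sigma(L)$ or ground-state positivity; instead it runs a sliding argument directly against the explicit positive kernel element: for a putative eigenfunction $g$ with eigenvalue $0$ (or $|u|$ with $\lambda<0$, via the pointwise Kato-type inequality $\ptf |u|\leq \mathrm{sgn}(u)\,\ptf u$), it considers $\phi_\beta=\phi'+\beta g$, takes $\bar\beta=\sup\{\beta<0:\phi_\beta<0 \text{ somewhere}\}$, and uses Lemma \ref{pp} at the negative minimum $\xi_\beta$ to force $f'(\phi)(\xi_\beta)<0$, i.e.\ $\xi_\beta\in(-1,1)$ (the concave part of $F$); this compactness lets it pass to the limit $\beta\to\bar\beta$ and conclude $\phi_{\bar\beta}\equiv 0$, giving simplicity and ruling out $\lambda<0$ in one stroke, with no spectral machinery beyond Lemma \ref{pp}. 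Your argument instead buys the conclusion from standard Perron--Frobenius theory: $\lambda_0=\inf\sigma(L)$ is attained because it lies below the essential spectrum (available from Proposition \ref{prop-s2} and Appendix \ref{app-s2}), the convexity inequality $(|a|-|b|)^2\leq(a-b)^2$ makes $|\psi|$ a ground state, positivity plus orthogonality to $\phi'>0$ forces $\lambda_0=0$ and simplicity. This is cleaner conceptually and closer to the literature (e.g.\ Frank--Lenzmann), while the paper's sliding method is more self-contained and is the mechanism it advertises as ``new,'' exploiting the global structure of $\ptf$. One step of yours needs care: you apply Lemma \ref{pp} to $|\psi|$ at a zero, but $|\psi|$ may a priori have a corner there, where the principal-value integral defining $\ptf|\psi|$ need not converge and the eigen-equation holds only in $L^2$. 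The standard repair, which you gesture at, is either to note that once $|\psi|$ is a genuine eigenfunction it is smooth by bootstrapping the equation $\ptf|\psi|=(\lambda_0-V)|\psi|$, or to first get sign-definiteness from the \emph{strict} inequality $(|a|-|b|)^2<(a-b)^2$ on $\{\psi(x)\psi(y)<0\}$ (so a sign-changing $\psi$ would beat the minimizer) and then apply Lemma \ref{pp} to the smooth $\psi$ itself; with either fix your proof is complete.
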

\begin{proof}
  Step 1.
 We prove $0$ is simple eigenvalue with eigenfunction $\phi'(x)$.
First, by differentiating $\A \phi = 0$ once, it is straightforward that
  $ \phi'(x)=\frac{2}{\pi}\frac{1}{1+x^2}$ is an eigenfunction corresponding to the eigenvalue $0$.

 Assume there is another eigenfunction $g$ corresponding to $0$ such that $g\in L^2$.
  By the regularity of the steady  solution, we know for any $k>0$, $g\in H^k(\mathbb{R})$ thus $g$ is smooth function.
  Without loss of generality, we assume $g$ takes  positive values at some $x_0$ (otherwise we can always construct such a function with some positive points by linear combination).
  Below, we will show $g$ is linearly dependent on $\phi'$.

  Define
  \begin{equation}
  \phi_\beta := \phi' + \beta g, \quad \beta\in \mathbb{R}.
  \end{equation}
  Define the set
  $$D_1:= \{\beta<0; \phi_\beta(\xi) <0 \text{ for some $\xi$} \}.$$
  Let
  $$
  \bar{\beta}:= \sup D_1.
  $$
  Such a $\bar{\beta}$ is well-defined. Indeed, since $g$ is positive at $x_0$, we know $\bar{\beta}\in [\beta_1, 0]$ with $\beta_1= -\frac{\phi'(x_0)}{g(x_0)}<0$.

Notice that if $\phi_\beta$ is a constant, since $\phi_\beta\in L^2$, we know $\phi_\beta\equiv 0$, which concludes $\phi'$ and $g$ are linearly dependent. Therefore, we can simply assume $\phi_\beta$ is not a constant.

  For any $\beta\in D_1$, since $\phi_\beta$ is also eigenfunction corresponding to eigenvalue $0$,
  \begin{equation}\label{eigen_1}
    L \phi_\beta =\ptf  \phi_\beta + f'(\phi) \phi_\beta= 0.
  \end{equation}
  Let $\xi_\beta\in[-\8, +\8]$ be a point such that $\phi_\beta$ attains its minimum. Thus we know $\phi_\beta(\xi_{\beta})<0.$
   Consider two cases (i) $\xi_\beta\in (-\8, +\8)$; (ii) $\xi_\beta=-\8$ or $+\8$. For case (ii), since $\phi_\beta\in L^2(\mathbb{R})$ and $\phi_\beta\in H^1(\mathbb{R})\hookrightarrow C(\mathbb{R})$, $\phi_\beta(\pm\8)$ must be zero, which contradicts with $\phi_\beta(\xi_{\beta})<0.$

   For case (i), by Lemma \ref{pp} we have
  $$
  \ptf  \phi_\beta |_{x=\xi_\beta}=\frac{1}{\pi}PV \int \frac{\phi_\beta(x)-\phi_\beta(y)}{|x-y|^2} \ud y \Big|_{x=\xi_\beta} { < }0.
  $$
  From \eqref{eigen_1} we know
  \begin{equation}
    f'(\phi)\phi_\beta \big|_{x=\xi_\beta} >0.
  \end{equation}
  which, together with $\phi_\beta(\xi_{\beta})<0$, leads to
  $$f'(\phi)|_{x=\xi_\beta}< 0.$$
   Due to the concave part of $F$ is bounded  between $-\frac12$ and $\frac12$, we know the set of $\xi_\beta$ is bounded. Indeed, $f'(\phi)(x)=\frac{x^2-1}{x^2+1}<0$ if and only if $x\in (-1,1)$.

  Take a convergent subsequence (still denote as $\beta$) with limit $\beta\to \bar{\beta}$ and $\xi_{\beta}\to \bar{\xi}$ for some $\bar{\xi}\in[-1,1]$. From the definition of $\bar{\beta}$, we know
  \begin{equation}
    \phi_{\bar{\beta}}(\bar{\xi}) = 0 \leq \phi_{\bar{\beta}}(\xi) \quad \text{ for any }\xi\in \mathbb{R}.
  \end{equation}
  Therefore from
  $L \phi_{\bar{\beta}}=0$
  we have
  $$\ptf  \phi_{\bar{\beta}} |_{x=\bar{\xi}}= - f(\phi) \phi_{\bar{\beta}} |_{x=\bar{\xi}}=0.$$
  However by Lemma \ref{pp}
  \begin{equation}
    \ptf  \phi_{\bar{\beta}} |_{x=\bar{\xi}} = \frac{1}{\pi}PV \int \frac{\phi_{\bar{\beta}}(x)-\phi_{\bar{\beta}}(y)}{|x-y|^2} \ud y \Big|_{x=\bar{\xi}} \leq 0.
  \end{equation}
  Therefore
  $$
  \phi_{\bar{\beta}}\equiv const = 0,
  $$
  which means $\phi'$ and $g$ are linearly dependent.

  Step 2. We prove $0$ is the principle eigenvalue.
  Assume $\lambda<0$ is the eigenvalue such that
  $$L u = \lambda u $$
  for some $u\in L^2$ and $u\neq 0$.
  By the regularity of the steady  solution, we know for any $k>0$, $u\in H^k(\mathbb{R})$ thus $u$ is smooth function and $|u|$ is continuous function.
  Then
  \begin{equation}\label{tm3.21}
    \ptf |u|+f(\phi)|u|\leq \mbox{sgn}{u}\cdot\big[\ptf u+f(\phi)u \big]=\lambda |u| \leq 0.
  \end{equation}
  Similarly,
  define
  \begin{equation}
  \phi_\beta := \phi' + \beta |u|, \quad \beta\in \mathbb{R}
  \end{equation}
and the set
  $$D_1:= \{\beta<0; \phi_\beta(\xi) <0 \text{ for some $\xi$} \}.$$
  Let
  $$
  \bar{\beta}= \sup D_1.
  $$
  which is well-defined since $|u|$ is positive at $x_0$ and  we know $\bar{\beta}\in [\beta_1, 0]$ with $\beta_1= -\frac{\phi'(x_0)}{|u|(x_0)}<0$.

Notice if $\phi_\beta$ is a constant, since $\phi_\beta\in L^2$, we know $\phi_\beta\equiv 0$, which concludes $\phi'$ and $|u|$ are linearly dependent, i.e. $L|u|=0$. However from \eqref{tm3.21}, $L|u|\leq \lambda |u|\leq 0$ and thus $\lambda=0$. It contradicts with $\lambda<0.$ Therefore we can simply assume $\phi_\beta$ is not a constant.

  For $\beta\in D_1$, from \eqref{tm3.21}
  \begin{align}\label{geq0}
    L \phi_\beta = \beta L |u|=\beta [\ptf |u|+f(\phi)|u|]\geq \beta\lambda|u|\geq 0, \text{ for all }x\in \mathbb{R}.
  \end{align}
  Let $\xi_\beta\in[-\8, +\8]$ be a point such that $\phi_\beta$ attains its minimum. Thus $\phi_\beta(\xi_{\beta})<0.$
   Consider two cases (i) $\xi_\beta\in (-\8, +\8)$; (ii) $\xi_\beta=-\8$ or $+\8$.
   For case (ii), since $\phi_\beta\in L^2(\mathbb{R})$ and $\phi_\beta\in H^1(\mathbb{R})\hookrightarrow C(\mathbb{R})$, $\phi_\beta(\pm\8)$ must be zero, which contradicts with $\phi_\beta(\xi_{\beta})<0.$

For case (ii),  by Lemma \ref{pp} we have
  $$
  \ptf  \phi_\beta |_{x=\xi_\beta}=\frac{1}{\pi}PV \int \frac{\phi_\beta(x)-\phi_\beta(y)}{|x-y|^2} \ud y \Big|_{x=\xi_\beta} { < }0.
  $$
  This, together with \eqref{geq0}, we know
  \begin{equation}
    f'(\phi)\phi_\beta \big|_{x=\xi_\beta} >0.
  \end{equation}
  Notice also $\phi_\beta(\xi_{\beta})<0$, thus
  $$f'(\phi)|_{x=\xi_\beta}< 0.$$
   Due to the concave part of $F$ is bounded between $-\frac12$ and $\frac12$, we know the set of $\xi_\beta$ is bounded, especially, $f'(\phi)(x)=\frac{x^2-1}{x^2+1}<0$ if and only if $x\in (-1,1)$.

 Take a convergent subsequence (still denote as $\beta$) with  limit $\beta\to \bar{\beta}$ and $\xi_{\beta}\to \xi^*$ for some $\xi^*\in[-1,1]$. From the definition of $\bar{\beta}$,
    $$\phi_{\bar{\beta}}(\xi^*)=0, \quad \phi_{\bar{\beta}}(x)\geq 0 \text{ for any }x\in \mathbb{R}.$$
    Then the limit of \eqref{geq0} shows that
    $$0\leq L \phi_{\bar{\beta}}= \ptf \phi_{\bar{\beta}} +f(\phi)\phi_{\bar{\beta}}.$$
    However at $x=\xi^*$, the RHS is
    $$
    \ptf \phi_{\bar{\beta}}|_{x=\xi^*} +f(\phi)\phi_{\bar{\beta}}|_{x=x^*}=\ptf \phi_{\bar{\beta}}|_{x=\xi^*}\leq  0.
    $$
    Therefore $\ptf \phi_{\bar{\beta}}|_{x=\xi^*}=  0$ and thus $$\phi_{\bar{\beta}}\equiv \phi_{\bar{\beta}}(\xi^*)=0,$$
    which means $\lambda$ could only be zero and contradicts with $\lambda<0.$
\end{proof}

%Therefore we state the lemmas without proofs.
%\begin{lem}\label{lem3.2}
%  For linear operator $\L$ in \eqref{L2}, $\sigma(\L)\subset [-1, +\8)$.
%\end{lem}
%
%
%\begin{lem}\label{lem3.3}
%  For linear operator $\L$ in \eqref{L2}, $\sigma_c(\L)\subset [1, +\8)$.
%\end{lem}

From the Proposition \ref{prop-s2} and \ref{prop-s3} above, we know $0$ is the principle, simple eigenvalue of $L$ and the continuous spectrum $\sigma_c(L)\subset[1,+\8)$. Thus
we obtain spectral gap for nonlocal Schr\"oinger operator below.
\begin{thm}[Spectral gap]\label{gap}
  For linear operator $L$ in \eqref{L1}, there exists a constant $\lambda_2>0$ such that for any $u\bot \nul (L)$, i.e. $\int_\mathbb{R} u(x)\phi'(x) \ud x=0$, we have
  \begin{equation}
    \la L u, u \ra \geq \lambda_2\|u\|^2.
  \end{equation}
\end{thm}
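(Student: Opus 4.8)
The plan is to read off the coercivity estimate from the spectral picture already assembled. The operator $L$ is self-adjoint on $D(L)=H^1(\mathbb{R})$, since $\ptf$ is self-adjoint and the multiplier $\frac{x^2-1}{x^2+1}$ is bounded, hence a relatively bounded (indeed relatively compact) perturbation. Combining Proposition \ref{prop-s1}, \ref{prop-s2} and \ref{prop-s3}, $L$ has no residual spectrum, satisfies $\sigma(L)\subset[0,+\8)$, has $0$ as a \emph{simple} eigenvalue with eigenfunction $\phi'$, and has continuous spectrum contained in $[1,+\8)$. Set $\lambda_2:=\inf\bigl(\sigma(L)\setminus\{0\}\bigr)$. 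Because $L$ is self-adjoint and $0$ is a simple eigenvalue, $\nul(L)=\operatorname{span}\phi'$, the complement $(\operatorname{span}\phi')^\perp$ reduces $L$, and the part of $L$ acting there has spectrum $\sigma(L)\setminus\{0\}$. By the spectral theorem, the desired inequality $\la Lu,u\ra\ge\lambda_2\|u\|^2$ for all $u\perp\phi'$ is then \emph{equivalent} to the single claim $\lambda_2>0$, i.e. that $0$ is an isolated point of $\sigma(L)$. Thus the whole theorem reduces to producing a spectral gap above $0$.

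The crux is to pin the bottom of the \emph{essential} spectrum at $1$, not merely the continuous spectrum. I would decompose $L=L_0+W$ with $L_0:=\ptf+1$ and $W:=\frac{x^2-1}{x^2+1}-1=-\frac{2}{1+x^2}$. Here $L_0$ is a Fourier multiplier with symbol $|\xi|+1$, so $\sigma(L_0)=\sigma_{\mathrm{ess}}(L_0)=[1,+\8)$, while $W$ is a bounded potential decaying like $|x|^{-2}$ at infinity. A standard argument shows $W$ is relatively compact with respect to $L_0$: the operator $W(L_0+1)^{-1}$ is compact because $(L_0+1)^{-1}$ gains fractional regularity while $W$ decays at infinity, so the product factors through a compact embedding after cutting off in $x$ and in frequency. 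Weyl's theorem on the stability of the essential spectrum then gives $\sigma_{\mathrm{ess}}(L)=\sigma_{\mathrm{ess}}(L_0)=[1,+\8)$.

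With this in hand the conclusion is immediate. Every point of $\sigma(L)\cap[0,1)$ is an isolated eigenvalue of finite multiplicity, and such eigenvalues can accumulate only at the edge $1$ of the essential spectrum; since $0\in\sigma_p(L)$, it follows that $0$ is an isolated point of $\sigma(L)$, so there is $\eps>0$ with $(-\eps,\eps)\cap\sigma(L)=\{0\}$. Consequently $\lambda_2=\inf(\sigma(L)\setminus\{0\})\ge\eps>0$, and the spectral theorem applied to the part of $L$ on $(\operatorname{span}\phi')^\perp$ yields $\la Lu,u\ra\ge\lambda_2\|u\|^2$ for every $u\perp\phi'$, which is exactly the assertion. (Equivalently, one may run the Courant--Fischer min-max principle: the second min-max value equals either a genuine second eigenvalue below $1$, which must be strictly positive lest it produce a second $0$-eigenfunction contradicting the simplicity in Proposition \ref{prop-s3}, or else $\inf\sigma_{\mathrm{ess}}(L)=1$; either way it is positive.)

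The step I expect to be the main obstacle is the identification $\sigma_{\mathrm{ess}}(L)=[1,+\8)$, that is, the relative compactness of multiplication by $-\frac{2}{1+x^2}$ against the \emph{nonlocal} operator $\ptf+1$. Making this rigorous for the fractional Laplacian rather than the classical one, and thereby upgrading ``continuous spectrum in $[1,+\8)$'' to ``essential spectrum exactly $[1,+\8)$ with $0$ isolated,'' is precisely what guarantees a genuine gap and hence that $\lambda_2$ is strictly positive rather than merely nonnegative.
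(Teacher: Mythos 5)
Your proposal is correct and follows essentially the same route as the paper: the identification $\sigma_{\mathrm{ess}}(L)=\sigma_{\mathrm{ess}}(\ptf+I)=[1,+\infty)$ that you flag as the crux is precisely what the paper establishes in Appendix \ref{app-s2} (Lemma \ref{lem-sc}), by proving that multiplication by $q(x)-1=-\frac{2}{1+x^2}$ composed with $\bigl((1+i)I+\ptf\bigr)^{-1}$ is compact — using the decay of the potential at infinity together with the $H^1$ gain of the resolvent and a compact embedding on bounded intervals — and then invoking the Reed--Simon (Weyl-type) perturbation theorem. Combined with the simplicity of the eigenvalue $0$ from Proposition \ref{prop-s3}, the paper concludes the gap exactly as you do.
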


\begin{rem}[Hardy type functional inequality and best constant]
Recall Hardy's inequality for the homogeneous Sobolev space in one dimension. For $0<s<\frac{1}{2}$
\begin{equation}
  \|u\|_{\dot{H}^s}^2 \geq C_{s} \int_{\mathbb{R}} |x|^{-2s} |u(x)|^2 \ud x,
\end{equation}
with sharp constant
$$C_s=2^{2s} \frac{(\Gamma(1+2s)/4)^2}{(\Gamma(1-2s)/4)^2}.$$
As a consequence of Proposition \ref{prop-s3}, we have the following Hardy's type functional inequality at critical index $s=\frac{1}{2}$.
\begin{cor}
  For any $u\in H^{1/2}(\mathbb{R}),$ we have
  \begin{equation}
  \int_{\mathbb{R}}\frac{1-x^2}{1+x^2} u^2(x) \ud x \leq \|u\|_{\dot{H}^{\frac{1}{2}}}^2.
  \end{equation}
  Moreover, the equality holds if and only if $u(x)=\frac{C}{1+x^2}.$
\end{cor}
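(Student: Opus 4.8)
The plan is to read the inequality as nonnegativity of the quadratic form attached to the operator $L$ in \eqref{L1}, and then invoke the spectral information already obtained. Using the definition \eqref{L1} together with the identity $\la \ptf u, u\ra = \|u\|_{\dot{H}^{1/2}}^2$, for $u \in H^{1/2}(\mathbb{R})$ one has
\begin{equation}
\la L u, u\ra = \|u\|_{\dot{H}^{1/2}}^2 + \int_{\mathbb{R}} \frac{x^2-1}{x^2+1}\, u^2(x)\ud x,
\end{equation}
so the claimed bound $\int_{\mathbb{R}} \frac{1-x^2}{1+x^2} u^2 \ud x \le \|u\|_{\dot{H}^{1/2}}^2$ is literally the statement $\la Lu, u\ra \ge 0$ on the form domain $H^{1/2}(\mathbb{R})$.

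Next I would record that $L$ is self-adjoint: the multiplier $\frac{x^2-1}{x^2+1}$ takes values in $[-1,1]$, so it is a bounded self-adjoint multiplication perturbation of the self-adjoint operator $\ptf$, and its form domain is $H^{1/2}(\mathbb{R})$. Combining Proposition \ref{prop-s1} (no residual spectrum), Proposition \ref{prop-s2} ($\sigma_c(L)\subset[1,+\8)$) and Proposition \ref{prop-s3} ($\sigma_p(L)\subset[0,+\8)$ with $0$ simple), we obtain $\sigma(L)\subset[0,+\8)$ and $\inf\sigma(L)=0$, the infimum being attained at the simple eigenvalue $0$. The variational characterization of the bottom of the spectrum of a self-adjoint operator then yields $\la Lu, u\ra \ge \big(\inf\sigma(L)\big)\|u\|^2 = 0$ for every $u$ in the form domain, which is exactly the asserted inequality.

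For the equality case I would argue through the spectral measure $\mu_u$ of $u$: since $\sigma(L)\subset[0,+\8)$, the relation $0 = \la Lu, u\ra = \int_{[0,+\8)} \lambda \ud\mu_u(\lambda)$ forces $\mu_u$ to be supported on $\{0\}$, so $u\in\nul(L)$. By Proposition \ref{prop-s3} the kernel is one-dimensional, spanned by $\phi'(x)=\frac{2}{\pi}\frac{1}{1+x^2}$, hence $u(x) = \frac{C}{1+x^2}$ for some constant $C$; conversely, any such $u$ is a multiple of $\phi'$, and $L\phi'=0$ gives $\la Lu, u\ra = 0$, so equality holds precisely for these functions.

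The main obstacle is concentrated entirely in the equality case, and it is not a new difficulty: the vanishing of the form forces $u$ into the zero-eigenspace, and pinning down that this eigenspace is exactly $\mathrm{span}\{1/(1+x^2)\}$ relies on the simplicity of the eigenvalue $0$ together with the spectral gap separating it from $\sigma_c(L)\subset[1,+\8)$. Both facts are the nontrivial content of Propositions \ref{prop-s2} and \ref{prop-s3}; everything else here is the elementary rewriting of the form and the standard variational principle for the bottom of the spectrum.
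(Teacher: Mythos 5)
Your proposal is correct and follows exactly the route the paper intends: the corollary is stated there without proof, ``as a consequence of Proposition \ref{prop-s3},'' and your rewriting of the inequality as $\la Lu,u\ra\geq 0$ together with Propositions \ref{prop-s1}--\ref{prop-s3} and the variational principle is precisely the implicit argument, with the equality case settled via $\nul(L)=\mathrm{span}\{\phi'\}$ as you do. Your spectral-measure treatment of the equality case merely fills in standard details the paper omits (and, as a small remark, it needs only $\sigma(L)\subset[0,+\8)$ and simplicity of $0$, not the gap from Proposition \ref{prop-s2}).
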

\end{rem}

\begin{rem}
Notice by harmonic extension the steady profile in upper half plane is $\phi(x,y):=\frac{2}{\pi}\arctan\frac{x}{1+y}$, which has the harmonic conjugate $g(x,y):=\frac{1}{\pi}\ln(x^2+(1+y)^2).$ So $z(x,t):=\phi(x,y)+ig(x,y)$ is the holomorphic extension in the upper half-space $\mathbb{C}_+$ of $\phi(x)=\frac{2}{\pi}\arctan x$.
For the linearized problem,  a related holomorphic eigenvalue problem in  $\mathbb{C}_+$ is
  \begin{equation}
  -i  \pt_z w - \frac{2i}{i+z} w =  \lambda w,
  \end{equation}
  whose restriction on the real line becomes a nonlocal eigenvalue problem
  \begin{equation}
    \ptf  u + \frac{x^2-1}{x^2+1}u+ \frac{2x}{x^2+1}H(u)=(\lambda+1)u.
  \end{equation}
\end{rem}

\section{Exponential decay to steady profile}\label{sec4}
Next we will use the spectral gap Theorem \ref{gap} to prove the exponential decay rate for $u(x,t).$
To take advantage the lower bound of the linearized nonlocal operator $L$ for functions orthogonal to its null space $\nul(L)$, we need to first shift the standing profile in terms of a dynamic coordinate.  We   construct a shift function $\alpha(t)$ such that
\begin{equation}\label{def-shift}
  \phi_\alpha(x,t) := \phi(x-x_0- \alpha(t)),  \quad v_\alpha(x,t) := u(x,t) - \phi_\alpha(x,t)
\end{equation}
satisfy
$$
v_\alpha \bot \mbox{ Null} (\LL_\alpha ), \quad \LL_\alpha  := \ptf + f'(\phi_\alpha)
$$
i.e.
\begin{equation}\label{botcon}
\int_{-\8}^{\8} \big( u(x,t)-\phi(x-x_0-\alpha(t)) \big)  \phi'(x-x_0-\alpha(t)) \,dx
= \int_{-\8}^{\8}v_\alpha(x,t) \phi'(x-x_0-\alpha(t)) \,dx
=0.
\end{equation}
Notice that $\int_{-\8}^{\8} \phi(x) \phi'(x) \ud x =0$.
Define a functional of $\alpha$ as
\begin{equation}
W(t,\alpha) :
=\int_{-\8}^{\8}  u(x,t) \phi'(x-x_0-\alpha) \,\ud x.
\end{equation}
The following proposition is to clarify the existence, uniqueness and properties of  $\alpha(t)$ and it also provides an elementary proof for implicit function theorem in unbounded domain.
\begin{prop}\label{th4.1}
For $W(t, \alpha)$ in \eqref{botcon}, there exist $T>0$ large enough  and a unique $\alpha(t)$ such that
\begin{enumerate}[(i)]
  \item $W(t, \alpha(t))=0 \quad \text{ for }t>T$;
  \item $\alpha(t) \to 0$ \,\,as\, $t\to+\8$;
  \item $\alpha(t)\in C^1(T,+\8)$.
\end{enumerate}
\end{prop}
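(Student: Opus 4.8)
The plan is to treat Proposition \ref{th4.1} as an elementary implicit function theorem tailored to the unbounded line: for each large $t$ I will solve $W(t,\alpha)=0$ in $\alpha$ by strict monotonicity plus the intermediate value theorem, and I will get the $C^1$ dependence on $t$ from difference quotients rather than an abstract citation. Everything rests on two features of the profile together with Theorem \ref{Uni}: $\phi'(x)=\frac{2}{\pi}\frac{1}{1+x^2}>0$ is positive and integrable, $\phi''$ is integrable with decay $|x|^{-3}$, and $\|u(\cdot,t)-\phi(\cdot-x_0)\|_{L^\8}\to 0$. These let me control integrals of the non-decaying $u$ against the $L^1$ weights $\phi',\phi''$.

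First I would check that $W$ is well defined and differentiable in $\alpha$: since $-1<u<1$ by the comparison principle and $\phi'\in L^1$ we have $|W|\le\|\phi'\|_{L^1}$, and because $\pt_\alpha[\phi'(x-x_0-\alpha)]=-\phi''(x-x_0-\alpha)$ with $\phi''\in L^1$ uniformly in $\alpha$, dominated convergence gives
\[
\pt_\alpha W(t,\alpha)=-\int_{-\8}^{\8} u(x,t)\,\phi''(x-x_0-\alpha)\,\ud x .
\]
Splitting $u=\phi(\cdot-x_0)+(u-\phi(\cdot-x_0))$ and integrating by parts in the first piece yields
\[
\pt_\alpha W(t,\alpha)=m+R_1(\alpha)+R_2(t,\alpha),\qquad m:=\int_{-\8}^{\8}\big(\phi'\big)^2\,\ud x>0,
\]
where $R_1(\alpha)\to 0$ as $\alpha\to 0$ by continuity of translation and $|R_2(t,\alpha)|\le\|u(\cdot,t)-\phi(\cdot-x_0)\|_{L^\8}\,\|\phi''\|_{L^1}\to 0$ as $t\to\8$ by Theorem \ref{Uni}. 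Hence I can fix $\alpha_0>0$ and $T>0$ so that $\pt_\alpha W(t,\alpha)\ge m/2$ on the half-strip $\{|\alpha|\le\alpha_0,\ t\ge T\}$. The same splitting, now using $\int\phi\phi'=0$, shows $W(t,0)=\int(u-\phi(\cdot-x_0))\phi'(\cdot-x_0)\,\ud x\to 0$.

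With this uniform positivity the conclusions follow quickly. For fixed $t\ge T$, $\alpha\mapsto W(t,\alpha)$ is strictly increasing on $[-\alpha_0,\alpha_0]$; enlarging $T$ so that $|W(t,0)|<\tfrac{m}{2}\alpha_0$ forces $W(t,-\alpha_0)<0<W(t,\alpha_0)$, and the intermediate value theorem gives a unique zero $\alpha(t)\in(-\alpha_0,\alpha_0)$, which is (i). A mean value theorem then gives $|W(t,0)|=|\pt_\alpha W(t,\tilde\alpha)|\,|\alpha(t)|\ge\tfrac{m}{2}|\alpha(t)|$, so $|\alpha(t)|\le\tfrac{2}{m}|W(t,0)|\to 0$, which is (ii). For (iii) I run the elementary difference-quotient form of the implicit function theorem: from $W(t,\alpha(t))=W(s,\alpha(s))=0$ and two mean value theorems,
\[
\alpha(t)-\alpha(s)=-\frac{W(t,\alpha(s))-W(s,\alpha(s))}{\pt_\alpha W(t,\tilde\alpha)},
\]
and since $\pt_t W(t,\alpha)=\int\pt_t u\,\phi'(x-x_0-\alpha)\,\ud x$ exists and is continuous in $(t,\alpha)$ (because $\pt_t u=\pt_t v\in L^2$ depends continuously on $t$ by the regularity in Theorem \ref{strongslu}, while $\phi'\in L^2$), dividing by $t-s$ and letting $s\to t$ gives $\alpha\in C^1(T,\8)$ with $\alpha'(t)=-\pt_t W/\pt_\alpha W$.

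The main obstacle is securing the positive lower bound for $\pt_\alpha W$ \emph{uniformly} over the whole half-strip $t\ge T$ rather than merely pointwise in $t$: both the intermediate value argument and the quantitative estimate $|\alpha(t)|\le\tfrac{2}{m}|W(t,0)|$ need it. Since $u$ does not decay at infinity, the integrals defining $W$ and $\pt_\alpha W$ converge only because of the $L^1$ weights $\phi',\phi''$, and the error $R_2$ must be controlled through the $L^\8$--$L^1$ pairing; this is precisely where the uniform convergence of Theorem \ref{Uni} (not merely an $L^2$ bound) is indispensable, and it is the feature that distinguishes the unbounded-domain statement from the textbook implicit function theorem.
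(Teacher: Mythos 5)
Your proposal is correct in its core and takes a genuinely different route from the paper's. The central derivative computation is the same in both: $\partial_\alpha W(t,\alpha)=-\int u\,\phi''(x-x_0-\alpha)\,dx=\int\phi'(x+\alpha)\phi'(x)\,dx-\int\big(u-\phi(\cdot-x_0)\big)\phi''(x-x_0-\alpha)\,dx$, with the error controlled by the $L^\infty$--$L^1$ pairing and Theorem \ref{Uni}. But where the paper obtains existence by sending $\alpha\to\pm\infty$ and applying the intermediate value theorem on all of $\mathbb{R}$, and then confines \emph{every} root of $W(t,\cdot)$ to a compact interval $[a_T,b_T]$ via the stability squeeze \eqref{squ} from Proposition \ref{lem4.7} (its Step 1), you work entirely inside a small strip $\{|\alpha|\le\alpha_0,\ t\ge T\}$ on which $\partial_\alpha W\ge m/2$ with $m=\int(\phi')^2\,dx$. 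This buys two real improvements: your mean value estimate $|\alpha(t)|\le\frac{2}{m}|W(t,0)|\le\frac{4}{m}\,b(t)$ is quantitative and replaces the paper's Step 3, which is a soft compactness-and-contradiction argument yielding no rate; and your difference-quotient derivation of $\alpha'(t)=-\partial_t W/\partial_\alpha W$ actually supplies the implicit function theorem argument that the paper merely cites in Step 4. Your use of Theorem \ref{strongslu} to justify $\partial_t W$ and its continuity is also sound, since everything is needed only for large $t$ where the smoothing estimates apply.

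The one genuine gap is uniqueness. Your intermediate value argument yields the unique zero of $W(t,\cdot)$ \emph{within} $(-\alpha_0,\alpha_0)$, but nothing in your write-up excludes zeros with $|\alpha|>\alpha_0$, so "a unique $\alpha(t)$" is not yet established for fixed $t>T$: a competitor satisfying (i) could live outside your strip, and appealing to (ii) only identifies competitors after a time that depends on the competitor, not on a single $T$ fixed in advance. The paper closes exactly this loophole in its Step 1, where the squeeze \eqref{squ} forces all solutions of $W(t,\alpha)=0$ into $[a_T,b_T]$, on which $\partial_\alpha W>0$. With your own ingredients the repair is one line: set $h(\alpha):=\int\phi(y+\alpha)\phi'(y)\,dy$, which is strictly increasing with $h(0)=0$ and $h(\pm\infty)=\pm2$; since $\sup_{\alpha\in\mathbb{R}}|W(t,\alpha)-h(\alpha)|\le b(t)\|\phi'\|_{L^1}=2b(t)$, once $2b(t)<\min\{h(\alpha_0),\,-h(-\alpha_0)\}$ one has $W(t,\alpha)>0$ for all $\alpha\ge\alpha_0$ and $W(t,\alpha)<0$ for all $\alpha\le-\alpha_0$, so every zero lies in $(-\alpha_0,\alpha_0)$ and your strip-uniqueness becomes global. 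With that line added, your proof is complete and in parts sharper than the paper's.
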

\begin{proof}
Step 1. We prove the existence and bound of $\alpha(t)$. Using intermediate value theorem and  \eqref{squ}, we will first prove there exists $T>0$ such that for any $t>T$ there exist at least one $\alpha(t)$ such that $W(t, \alpha(t))=0$ \text{ for } $t>T$. Moreover, for all the solutions to $W(t,\alpha(t))=0$, there exist $a_T,\, b_T$ such that $\alpha(t)\in[a_T, b_T]$ for $t>T$.

By  \eqref{squ}, for $t>T$ large enough and $\eps$ small enough, we know that there exists $x_0$ such that
$$
 \int_{-\8}^{\8} \big( \phi(x-x_0-\frac{\mu+k}{\mu \beta}\eps) - \eps e^{-\mu t}  \big)  \phi'(x-x_0-\alpha) \,dx
 \le W(t,\alpha),
$$
$$
  W(t,\alpha) \le \int_{-\8}^{\8} \big( \phi(x-x_0+\frac{\mu+k}{\mu \beta}\eps) + \eps e^{-\mu t}  \big)  \phi'(x-x_0-\alpha) \,dx,
$$
or equivalently
\begin{equation}\label{tm-sq1}
 \int_{-\8}^{\8} \big( \phi(x +\alpha-\frac{\mu+k}{\mu \beta}\eps) - \eps e^{-\mu t}  \big)  \phi'(x) \,dx
 \le W(t,\alpha),
\end{equation}
\begin{equation}\label{tm-sq2}
  W(t,\alpha) \le \int_{-\8}^{\8} \big( \phi(x +\alpha+\frac{\mu+k}{\mu \beta}\eps) + \eps e^{-\mu t}  \big)  \phi'(x) \,dx.
\end{equation}
We choose $T$ such that $\eps e^{-\mu T}<\frac12.$
Therefore
$$
 1< \int_{-\8}^{\8} \big( 1 - \eps e^{-\mu t}  \big)  \phi'(x) \,dx
 \le \lim_{\alpha \to\8}W(t,\alpha)
$$
$$
   \lim_{\alpha \to -\8}W(t,\alpha) \le \int_{-\8}^{\8} \big( -1 + \eps e^{-\mu t}\big)  \phi'(x) \,dx < -1,
$$
for any $t>T.$
Hence by intermediate value theorem there is at least one $\alpha(t)$.

Next, define $b_T$ is the solution of
$$\int_{-\8}^{\8}  \phi(x +b_T-\frac{\mu+k}{\mu \beta}\eps)\phi'(x) \ud x = 2\eps e^{-\mu T}<1$$
and $a_T$ is the solution of
$$\int_{-\8}^{\8}  \phi(x +a_T+\frac{\mu+k}{\mu \beta}\eps)\phi'(x) \ud x = -2\eps e^{-\mu T}>-1.$$
From \eqref{tm-sq1},
\begin{align*}
0&=W(t,\alpha(t))\geq \int_{-\8}^{\8}  \phi\left(x +\alpha(t)-\frac{\mu+k}{\mu \beta}\eps\right)\phi'(x) \ud x-2\eps e^{-\mu t}\\
\geq & \int_{-\8}^{\8}  \phi\left(x+\alpha(t)-\frac{\mu+k}{\mu \beta}\eps\right)\phi'(x) \ud x-2\eps e^{-\mu T}\\
\geq & \int_{-\8}^{\8}  \phi\left(x+\alpha(t)-\frac{\mu+k}{\mu \beta}\eps\right)\phi'(x) \ud x-\int_{-\8}^{\8}  \phi\left(x +b_T-\frac{\mu+k}{\mu \beta}\eps\right)\phi'(x) \ud x.
 \end{align*}
This implies $\alpha(t)\leq b_T$ since $\int_{-\8}^{\8}  \phi(x+\alpha-\frac{\mu+k}{\mu \beta}\eps)\phi'(x) \ud x$ is increasing with respect to $\alpha$.  Similarly, we can use \eqref{tm-sq2} to obtain $\alpha(t)\geq a_T$ so $a_T\leq \alpha(t)\leq b_T.$

Step 2. Uniqueness of $\alpha(t)$.
Differentiating $G$ with respect $\alpha$ yields
\begin{align*}
\pt_\alpha W&=\int_{-\8}^{\8} -u(x,t) \phi''(x-x_0-\alpha)\ud x\\
%=\int_{-\8}^{\8}  \phi'(x-x_0-\alpha)^2 \,\ud x - \int_{-\8}^{\8}v(x,t) \phi''(x-x_0-\alpha) \,\ud x
%\\
%&=\int_{-\8}^{\8}  \phi'(x)^2 \,\ud x + \int_{-\8}^{\8}
%\big(\phi(x-x_0 - \alpha)  - \phi(x-x_0)  \big) \phi''(x-x_0-\alpha) \,\ud x\\
%&\quad - \int_{-\8}^{\8} \big(u(x,t) - \phi(x-x_0)\big)  \phi''(x-x_0-\alpha) \,\ud x
%\\
%&=\int_{-\8}^{\8}  \phi'(x)^2 \,\ud x + \int_{-\8}^{\8}
%\big(\phi(x)  - \phi(x+\alpha)  \big) \phi''(x) \,\ud x
%- \int_{-\8}^{\8} \big(u(x,t) - \phi(x-x_0)\big)  \phi''(x-x_0-\alpha) \,\ud x
%\\
%&=\int_{-\8}^{\8}  \phi'(x)^2 \,\ud x + \int_{-\8}^{\8}
%\big( \phi'(x+\alpha) - \phi'(x) \big) \phi'(x) \,\ud x
%- \int_{-\8}^{\8} \big(u(x,t) - \phi(x-x_0)\big)  \phi''(x-x_0-\alpha) \,\ud x
%\\
&=\int_{-\8}^{\8} \phi'(x+\alpha)  \phi'(x) \,\ud x
- \int_{-\8}^{\8} \big(u(x,t) - \phi(x-x_0)\big)  \phi''(x-x_0-\alpha) \,\ud x\\
&\ge \int_{-\8}^{\8} \phi'(x+\alpha)  \phi'(x) \,\ud x
-  \max_{x} |u(x,t) - \phi(x-x_0)| \int_{-\8}^{\8} |\phi''(x)| \,\ud x > 0
\end{align*}
for large $t>T_2$. Here we used $b(t)=\max_{x\in\mathbb{R}}|u(x,t)-\phi(x-x_0)|\to 0$ as $t\to+\8$ from Theorem \ref{Uni}.
%(iii)
%Moreover, we know $\alpha\not\to +\8$ as $t\to\8$. Otherwise, if there is a sequence $t_k\to\8$ such that $\alpha(t_k)\to +\8$, then \eqref{tm-sq1} gives us
%\begin{equation}
%\begin{aligned}
% 0&< \int_{-\8}^{\8} \big( 1  -\phi(x) \big)  \phi'(x) \,dx\\
% &= \lim_{t_k\to +\8} \int_{-\8}^{\8} \big( \phi(x-x_1 +x_0 +\alpha(t_k)) - \eps e^{-\mu t_k} -\phi(x) \big)  \phi'(x) \,dx
% \le W(t_k,\alpha(t_k))=0,
% \end{aligned}
%\end{equation}
%which is a contradiction.

Step 3. We prove $\alpha(t)\to 0$ as $t\to +\8.$

If $\alpha(t) \not \to 0$ as $t\to+\8$, then there are constant $a>0$ and a sequence $t_k\to\8$ as $k\to \8$ such that
$b_T\geq \alpha(t_k) \ge a$ (or $a_T\le \alpha(t_k) \le -a$). Then we have a subsequence (still denote as $t_k$) and $a^*>0$ such that $\alpha(t_k)\to a^*.$ Recall \eqref{botcon}, which shows
$$\int_{-\8}^{\8} \big(  \phi(x-x_0) -  \phi(x-x_0-\alpha(t_k)  \big)  \phi'(x-x_0-\alpha(t_k)) \,dx
=\int_{-\8}^{\8} \big(\phi(x-x_0) -  u(x,t_k)\big)  \phi'(x-x_0-\alpha(t_k)) \,dx.$$
Taking limit as $t_k\to\8$ in
\begin{align*}
&\int_{-\8}^{\8} \big(  \phi(x-x_0) -  \phi(x-x_0-\alpha(t_k)  \big)  \phi'(x-x_0-\alpha(t_k)) \,dx\\
=&\int_{-\8}^{\8} \big(\phi(x-x_0) -  u(x,t_k)\big)  \phi'(x-x_0-\alpha(t_k)) \,dx\\
\le& \max_{x} |\phi(x-x_0) -  u(x,t_k)| \int_{-\8}^{\8}  \phi'(x-x_0-\alpha(t_k)) \,dx\\
=& 2 \max_{x} |\phi(x-x_0) -  u(x,t_k)| \to 0,
\end{align*}
leads to
\begin{equation}\label{tm11}
\int_{-\8}^{\8} \big(  \phi(x-x_0) -  \phi(x-x_0-a^*)  \big)  \phi'(x-x_0-a^*)\ud x\leq 0.
\end{equation}
On the other hand, since $a^*>0$
\begin{equation}\label{tm12}
\phi(x-x_0) -  \phi(x-x_0-a^*)\geq 0.
\end{equation}
Then due to $\phi'> 0$, \eqref{tm11} and \eqref{tm12} lead to
$$\int_{-\8}^{\8} \big(  \phi(x-x_0) -  \phi(x-x_0-a^*)  \big)  \phi'(x-x_0-a^*) \ud x= 0,$$
which is a contradiction due to $a^*>0$.

Step 4.
$\alpha(t) \in C^1(T,+\8)$
is directly from implicity function theorem.
\end{proof}
Next, we prove the shift $\alpha(t)$ introduced above contributes an exponentially small error.
\begin{lem}\label{lem-exp}
For $\alpha(t)$ and $v_\alpha(x,t)$ defined in \eqref{def-shift},
there are constants $C$ and $\mu$ such that
\begin{itemize}
  \item $\|v_\alpha\| \le C e^{-\mu t};$
  \item $|\alpha(t)| \le C e^{-\mu t}$.
\end{itemize}
\end{lem}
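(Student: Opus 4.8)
The plan is to run an $L^2$ energy estimate for the perturbation $v_\alpha$ in the moving frame, exploiting the orthogonality \eqref{botcon} to annihilate the (a priori uncontrolled) shift velocity $\alpha'(t)$ and then invoking the spectral gap of Theorem \ref{gap}. First I would derive the evolution equation for $v_\alpha$. Writing $u=\phi_\alpha+v_\alpha$ and using translation invariance of $\A$ (so $\A\phi_\alpha=0$, i.e. $\ptf\phi_\alpha=-f(\phi_\alpha)$) together with $\pt_t\phi_\alpha=-\alpha'(t)\phi'(\cdot-x_0-\alpha)$, equation \eqref{Neqn} gives
\begin{equation}
\pt_t v_\alpha = \alpha'(t)\,\phi'(\cdot-x_0-\alpha) - \LL_\alpha v_\alpha - N(v_\alpha),
\end{equation}
where $N(v_\alpha):=f(\phi_\alpha+v_\alpha)-f(\phi_\alpha)-f'(\phi_\alpha)v_\alpha$ is the quadratic remainder, satisfying $|N(v_\alpha)|\le \tfrac12\max|f''|\,v_\alpha^2$ pointwise since $f''$ is bounded.

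Next I would pair this with $v_\alpha$ in $L^2$. By the defining orthogonality \eqref{botcon}, $\la v_\alpha,\phi'(\cdot-x_0-\alpha)\ra=0$, so the whole $\alpha'(t)$ term drops out — this is the crucial step that decouples the $v_\alpha$ estimate from the shift velocity. What remains is
\begin{equation}
\frac12\frac{\ud}{\ud t}\|v_\alpha\|^2 = -\la \LL_\alpha v_\alpha, v_\alpha\ra - \la N(v_\alpha), v_\alpha\ra.
\end{equation}
Since $\nul(\LL_\alpha)=\mathrm{span}\{\phi'(\cdot-x_0-\alpha)\}$ is merely the translate of $\nul(L)$ and $\ptf$ commutes with translations, the spectral gap of Theorem \ref{gap} transfers verbatim to $\LL_\alpha$, giving $\la \LL_\alpha v_\alpha, v_\alpha\ra\ge \lambda_2\|v_\alpha\|^2$ for $v_\alpha$ in the orthogonal complement to which it belongs. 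For the nonlinear term I would use $|\la N(v_\alpha), v_\alpha\ra|\le C\|v_\alpha\|_{L^\8}\|v_\alpha\|^2$; the uniform convergence Theorem \ref{Uni} together with $\alpha(t)\to0$ forces $\|v_\alpha\|_{L^\8}\to0$, so there is $T_1$ with $C\|v_\alpha\|_{L^\8}\le\lambda_2/2$ for $t>T_1$. This yields $\frac{\ud}{\ud t}\|v_\alpha\|^2\le-\lambda_2\|v_\alpha\|^2$ for large $t$, and Gr\"onwall gives $\|v_\alpha(t)\|\le Ce^{-\mu t}$ with $\mu=\lambda_2/2$.

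To control $\alpha$ I would differentiate the constraint $\int v_\alpha\,\phi'(\cdot-x_0-\alpha)\,\ud x=0$ in time, substitute the evolution equation, and use self-adjointness of $\LL_\alpha$ with $\LL_\alpha\phi'(\cdot-x_0-\alpha)=0$ to kill the linear term, obtaining
\begin{equation}
\alpha'(t)\Big[\|\phi'\|^2 - \int v_\alpha\,\phi''(\cdot-x_0-\alpha)\,\ud x\Big] = \int N(v_\alpha)\,\phi'(\cdot-x_0-\alpha)\,\ud x.
\end{equation}
The bracket tends to $\|\phi'\|^2>0$ since $\|v_\alpha\|\to0$, while the right side is $O(\|v_\alpha\|^2)$; hence $|\alpha'(t)|\le C\|v_\alpha\|^2\le Ce^{-2\mu t}$. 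Integrating from $t$ to $+\8$ and using $\alpha(t)\to0$ from Proposition \ref{th4.1} gives $|\alpha(t)|\le Ce^{-2\mu t}\le Ce^{-\mu t}$.

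The main obstacle is the rigorous closure of the $L^2$ estimate in the absence of a uniform-in-time $H^1$ bound: the smallness needed to absorb the nonlinearity must be extracted from the $L^\8$ decay supplied by Theorem \ref{Uni} rather than from any $H^1$ control, and the gap should be applied in its quadratic-form version, valid on $H^{1/2}$, where the energy bound \eqref{H12} furnishes a uniform ceiling. The per-time smoothing of Theorem \ref{strongslu}(iii) justifies, for each fixed $t>0$, the integrations by parts and the use of $\LL_\alpha v_\alpha\in L^2$; the delicate point is that the exponential-in-$L^2$ decay established here, not a pointwise bound, is what one must later upgrade to the uniform decay of Theorem \ref{mainth}, again passing first through the vanishing-dissipation subsequence to recover the $\dot H^1$ bound used in the Ladyzhenskaya interpolation.
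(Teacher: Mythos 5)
Your argument is correct and takes essentially the same route as the paper's own proof: the same moving-frame evolution equation with quadratic Taylor remainder, the same use of the orthogonality \eqref{botcon} to annihilate the $\dot{\alpha}$ term in the $L^2$ pairing, the spectral gap of Theorem \ref{gap} applied to $\LL_\alpha$ (the paper justifies this via the unchanged potential bounds, you via translation invariance --- both fine), absorption of the cubic term through the $L^\infty$ smallness coming from Theorem \ref{Uni} and $\alpha(t)\to 0$, and Gr\"onwall. Your identity for $\alpha'(t)$, obtained by differentiating the constraint and killing the linear term with $\LL_\alpha \pt_x\phi_\alpha = 0$ and self-adjointness, is exactly the paper's computation from pairing the equation with $\pt_x\phi_\alpha$, followed by the same integration from $t$ to $+\infty$ using $\alpha(+\infty)=0$.
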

\begin{proof}
Step 1. Decay of $\|v_\alpha\|$.
From Theorem \ref{Uni}, we have $b(t)=\max_{x\in\mathbb{R}}|u(x,t)-\phi(x-x_0)|\to 0$. Since
\begin{equation}
 \max_x |v_\alpha(x,t)| \le b(t) + c_1 \alpha(t)
\end{equation}
for $c_1:=\max_{x\in\mathbb{R}}\phi'(x)=\frac{2}{\pi},$ we have
\begin{equation}\label{vdecay}
\max_x|v_\alpha(x,t)|\to 0, \quad\text{as }t\to +\8
\end{equation}
due to Proposition \ref{th4.1} (ii).
  From the definition of $v_\alpha$, for any $x\in\mathbb{R}$,
\begin{eqnarray}
% \nonumber to remove numbering (before each equation)
  \pt_t v_\alpha &=& \pt_t u + \dot{\alpha} \pt_x \phi_\alpha \nonumber \\
   &=& -\A u  + \A \phi_\alpha + \dot{\alpha}\pt_x \phi_\alpha \nonumber\\
   &=& -\LL_\alpha   u - f(u) + f'(\phi_\alpha)u  + \A \phi_\alpha+ \dot{\alpha} \pt_x \phi_\alpha \nonumber\\
   &=& -\LL_\alpha  v_\alpha - f(\phi_\alpha + v_\alpha)  + f(\phi_\alpha)+ f'(\phi_\alpha)v_\alpha + \dot{\alpha} \pt_x \phi_\alpha \nonumber\\
   &=& -\LL_\alpha  v_\alpha - \tfrac12 f''(\xi) v_\alpha^2 + \dot{\alpha} \pt_x \phi_\alpha,  \label{tm1.31}
\end{eqnarray}
where $\xi:=\xi(x)$ locates between $\phi_\alpha(x)$ and $\phi_\alpha(x)+ v_\alpha(x,t)$.

Since the shift $\alpha(t)\to 0$ and the upper/lower bound of $f'(\phi_\alpha)=-\cos(\pi \phi_\alpha)$ remains same,   we can directly apply spectral gap Theorem \ref{gap} to $\LL_\alpha $ to obtain
$$\la \LL_\alpha v_\alpha, v_\alpha \ra\geq \lambda_2 \|v_\alpha\|^2.$$
Therefore, multiplying $v_\alpha$ to both sides of \eqref{tm1.31} and integrating with respect to $x$ lead to
\begin{eqnarray}\label{expii}
  \frac{\ud}{\ud t}\frac{1}{2}\|v_\alpha(\cdot,t)\|^2 \leq -\lambda_2 \|v_\alpha(\cdot,t)\|^2 - \tfrac12 \int_{\mathbb{R}} f''(\xi(x))v_\alpha^3(x,t) \ud x,
\end{eqnarray}
where we used  $\la \phi_\alpha', v_\alpha \ra=0$.
For the second term $\int f''(\xi(x))v_\alpha(x,t)^3 \ud x$, from \eqref{vdecay},
$$|\int_\mathbb{R} f''(\xi(x))v^3_\alpha(x,t) \ud x|\leq \max_{x\in \mathbb{R}} (v_\alpha(x,t) f''(\xi(x))) \|v_\alpha(\cdot,t)\|^2 \leq \frac{\lambda_2}{2} \|v_\alpha(\cdot,t)\|^2 $$
for $t$ large enough. Therefore, \eqref{expii} gives the exponential decay rate for $v_\alpha$
 \begin{equation}
   \|v_\alpha(\cdot,t)\|\leq C e^{-\mu t}.
 \end{equation}
% and
% \eqref{expi} provided we can prove $|\alpha(t)|< C e^{-\omega_1}t$ for some $\omega_1$. {\blue here $\omega_1$ and $\omega$ could be different, right? }

(ii) Decay of $\alpha(t)$.
%The last step is to prove $\alpha(t)\to 0$ exponentially to ensure the limit solution is not traveling with $\alpha(t)$.
Multiply $\eqref{tm1.31}$ by $\pt_x\phi_\alpha$, then we have
\begin{eqnarray}\label{alpha0}
% \nonumber to remove numbering (before each equation)
  \la \pt_x\phi_\alpha, \pt_t v_\alpha \ra &=& \la -\LL_\alpha  v_\alpha+ \dot{\alpha} \pt_x\phi_\alpha - \tfrac12 f''(\xi) v_\alpha^2,~ \pt_x\phi_\alpha \ra \\
   &=& \dot{\alpha} \|\pt_x\phi_\alpha\|^2 - \int_{\mathbb{R}}  \tfrac12 f''(\xi) v_\alpha^2 \pt_x\phi_\alpha \ud x\nonumber,
\end{eqnarray}
where we used $\LL_\alpha  \pt_x\phi_\alpha=\LL_\alpha \phi_\alpha'=0$.  Differentiating the relation
\eqref{botcon} with respect to $t$ leads to
\begin{equation}
  \int_{\mathbb{R}} \phi_\alpha' \pt_t v_\alpha \ud x = \dot{\alpha} \int_{\mathbb{R}} \phi_\alpha'' v_\alpha \ud x,
\end{equation}
which is the LHS of \eqref{alpha0}. Thus \eqref{alpha0} becomes
$$\dot{\alpha}\|\phi_\alpha'\|^2 = \dot{\alpha} \int_{\mathbb{R}} \phi_\alpha'' v_\alpha\ud x + \int_{\mathbb{R}} \tfrac12 f''(\xi) v_\alpha^2 \phi_\alpha' \ud x.$$
This, together with the decay of $\|v_\alpha\|$ in Step 1, shows
\begin{align*}
  |\dot{\alpha}|\|\phi_\alpha'\|^2
  \leq  |\dot{\alpha}|\|\phi_\alpha''\|\|v_\alpha\|+ C\max|\phi_\alpha'| \|v_\alpha\|^2
  \leq  C e^{-\mu t}.
\end{align*}
Notice also $\|\phi_\alpha'\|^2= \|\phi'\|^2 = \int \frac{4}{2\pi^2}\frac{1}{(1+x^2)^2}\ud x$ and $\alpha(+\8)=0$.
Then standard calculus gives the exponential decay of $|\alpha(t)|$.
\end{proof}

%We recall the following standard interpolation lemma.
%\begin{lem}\label{inter}
%    Let $f\in C^1(\mathbb{R})$  then
%    $$\|f\|_{C^0}^3 \leq \frac{3}{2} \|f\|_{C^1} \|f\|^2. $$
%\end{lem}
Finally we collect all the results above and complete the proof of Theorem \ref{mainth}.
\begin{proof}[Proof of Theorem \ref{mainth}]
In Lemma \ref{lem-exp}, we proved
$$
 \|v_\alpha\|\leq C e^{-\mu t}.  %\quad \alpha(t) \in C^1(T,+\8), \max_{x} |v(x,t)| = 0.
$$
Since $\phi\in H^1(\mathbb{R})$, then from Theorem \ref{strongslu} we know $u\in C(0,\8;H^1(\mathbb{R}))$ and thus $v_\alpha\in C(0,\8;H^1(\mathbb{R}))$. However, the uniform $H^1(\mathbb{R})$ bound for $u$, as well as $v_\alpha$, is only valid for some sequence $t_n$. Therefore we need to use the same trick in the proof of Theorem \ref{Uni} as explained below. From Theorem \ref{Uni}, there exists a  time sequence $t_n\to+\8$ such that $\Q(t_n)\to 0$ and thus  $\|u(\cdot, t_n)\|_{\dot{H}^1}\leq c$ due to \eqref{tm-H1}.   Applying Ladyzhenskaya's inequality to $v_\alpha(\cdot, t_n)$, we have
\begin{equation}
\begin{aligned}
  \|v_\alpha(\cdot, t_n)\|_{L^\8}&\leq \sqrt{2}\|v_\alpha(\cdot, t_n)\|^{1/2}\|v_\alpha(\cdot, t_n)\|^{1/2}_{\dot{H}^1}\\
&\leq c \|v_\alpha(\cdot, t_n)\|^{1/2}.
\end{aligned}
\end{equation}
Hence we obtain the pointwise decay rate for the subsequence $v_\alpha(x, t_n)$
\begin{equation}\label{se-8}
  \|v_\alpha(\cdot,t_n)\|_{L^{\8}}\leq  C e^{-\mu t_n}
\end{equation}
for $t_n$ large enough.
Notice
$
\phi'(x)= \frac{2}{\pi} \frac{1}{1+x^2} , $
which has a maximum $\frac{2}{\pi}.$
Thus
\begin{align*}
  |u(x,t_n)-\phi(x,x_0)|
  \leq & |v_\alpha(x, t_n)|+|\phi(x-x_0)-\phi(x-x_0-\alpha(t_n))|\\
  \leq & |v_\alpha(x, t_n)| + \max_{x\in \mathbb{R}} |\phi'||\alpha(t_n)|
  \leq  |v_\alpha(x, t_n)| + \frac{2}{\pi}  |\alpha(t_n)| \leq C e^{-\mu t_n},
\end{align*}
uniformly in $x$ due to Lemma \ref{lem-exp} and \eqref{se-8}.
Then by the stability result Lemma \ref{lem4.7}, we know for any $t$ large enough,
\begin{equation}
|u(x,t)-\phi(x,x_0)|< c  e^{-\mu t} \quad \text{uniformly in }x\in \mathbb{R}.
\end{equation}

Notice also the basic estimate for $u$ in Lemma \ref{highes} which gives
$$\sup_{x\in \mathbb{R}} |u(x,t)-\phi(x-x_0)| \leq \frac{c}{1+|x|}, \quad \text{ for any }t>0.$$
We complete the proof of the main Theorem \ref{mainth}.
\end{proof}

\begin{rem}
{ {To the end, we discuss the relation to the classical Benjamin-Ono equation.} Benjamin-Ono equation is  a nonlinear partial integrodifferential equation describing one dimensional internal waves in deep water. Consider
\begin{equation}\label{BOc}
  h_t=\ptf h_x+h_x - 2hh_x.
\end{equation}
Denote $$E_B(h):=\frac{1}{2}\la \ptf h, h\ra +\frac{h^2}{2}-\frac{h^3}{3}$$ with
$$\frac{\delta E_B}{\delta h}=-\ptf h+h-h^2.$$
Then \eqref{BOc} becomes
$$h_t= \pt_x(\frac{\delta E_B}{\delta h}),$$
which is a Hamiltonian system. If we consider a special one-parameter family transformation $T_c$ such that
$$h(x,t)=T_cu:=u(x-ct),$$
then we have the traveling wave form of the Benjamin-Ono equation
$$\ptf u=u^2+(-1-c)u.$$ 
Let $W:=(1+c)\frac{u^2}{2}-\frac{u^3}{3}$ with $W'=(1+c)u-u^2$ and $W''=1+c-2u$. The special traveling wave form of Benjamin-Ono equation is
\begin{equation}\label{BO}
 \ptf u = u^2 -(1+c)u = -W'(u),
\end{equation}
which is closed to our static equation \eqref{steadyPN}. 
Benjamin \cite{Ben} found that $\Phi_c=\frac{2(c+1)}{1+(1+c)^2x^2}$ is a solitary solution to \eqref{BO} which, apart from periodic solution, is unique up to translation  \cite{Toland1}. For instance for $c=0$, notice $\Phi^2-\Phi=\frac{2(1-x^2)}{(1+x^2)^2}$ and $(H(\Phi))'=(\frac{2x}{1+x^2})'$, then $\Phi=\frac{2}{1+x^2}$ is a solitary solution.

Define the linearized operator of Benjamin-Ono equation along its solitary profile $\Phi_c$ as
\begin{equation}
  L_B u:=\ptf u + W''(\Phi)u = \ptf u + \frac{(1+c)^3x^2-3(c+1)}{(1+c)^2x^2+1}u,
\end{equation}
whose potential $\frac{(1+c)^3x^2-3(c+1)}{(1+c)^2x^2+1}$ is very similar to our problem $\frac{x^2-1}{x^2+1}$ in Section \ref{sec3}; with lower bound $-3(c+1)$ and upper bound $1+c$. The spectral analysis for this kind of self-adjoint operator like $L_B$ and $L$ defined in \eqref{L1} is standard.  But for completeness, we give a new proof  involving some particular global properties of the fractional Laplace operator; see Proposition \ref{prop-s3}.

One may also notice that unlike the solitary profile to Benjamin-Ono equation which vanishes at far field, in PN model the steady profile to \eqref{steadyPN} is a transition connecting from $-1$ to $1$ due to the double well potential.  The dynamic PN model is a gradient flow while the Benjamin-Ono equation is a Hamiltonian flow. However, the steady profile are closely related, the derivative of $\pi\phi$ is exactly $\Phi(x)$; see more connection in \cite{TolandPN}. We refer to \cite{Ben5} for the orbit stability of solitary solution  to  \eqref{BOc}; see also \cite{Wein1987, Soug1987, Sha1987} for more general integrodifferential equation.
}
\end{rem}

\section*{Acknowledge}
The authors wish to thank Professor R. Pego for valuable discussions and tremendous insights.
J.-G. Liu was supported in part by the National Science Foundation (NSF) under award DMS-1812573 and the NSF grant RNMS-1107444 (KI-Net).

%\subsection{How to separate the nonlocal operator into two parts, nonlocal near zero $+$ nonlocal otherwise}????
%
%
%{\blue I don't think we can regard $\ptf $ as $\pt_x$. There is no sign and I don't understand if we consider the first order ODE $y'+ v(x) y = \lambda y$, the dimension of solution is one. It is impossible to obtain a two-sides decay solution by WKB? What's the WKB for nonlocal operator?}

\newpage

\appendix
\section{Proof of Theorem \ref{strongslu}}\label{App_A}
\begin{proof}[Proof of Theorem \ref{strongslu}]
  {Step 1. We collect some properties for $G$ defined in \eqref{A2.3}.}
\\
(a) $G: L^2(\mathbb{R})\to L^2(\mathbb{R})$  is global Lipschiz, i.e.  \begin{equation}\label{GLip}
\|G(v_1)-G(v_2)\|\leq (1+\max|f'|)\|v_1-v_2\|\leq 2 \|v_1-v_2\|.
\end{equation}
(b) If $v(\cdot)\in H^1(\mathbb{R})$, then $G(v(\cdot))\in H^1(\mathbb{R})$. Indeed,
\begin{equation*}
 \|\pt_x G(v)\|\leq \|\pt_x v \|+ \pi \|v\|,
\end{equation*}
which implies
\begin{equation}
  \|G(v)\|_1 \leq (\pi+2)\|v\|_1.
\end{equation}

Step 2.   First it is easy to check operator $A$ defined in \eqref{A2.3} is m-accretive in $L^2(\mathbb{R})$. Indeed we know $\text{Re}\la Ax, x\ra\geq 0$ for all $x\in D(A)$ and from Lemma \ref{lem-sc}, we know $\sigma(A)=[1,+\8).$ Therefore $A$ is an infinitesimal generator of a linear strongly continuous semigroup of contractions and $\|e^{-At}\|\leq 1$.
  Second from global Lipschitz condition \eqref{GLip},  there exists a unique mild solution
  expressed by  \eqref{mild} and $v\in C([0,+\8); L^2(\mathbb{R}))$.

  Step 3.
 Lipschitz continuity in $t$ of $v$ and $G(v)$.
  \begin{align}\label{tma3}
    &v(t+h)-v(t)\\
    =&e^{-At}(e^{-Ah} v_0 - v_0)+ \int_0^{t+h} e^{-A(t+h-\tau)}G(v(\tau))\ud \tau- \int_0^t e^{-A(t-\tau)} G(v(\tau)) \ud \tau\nonumber\\
    =&e^{-At}\big[(e^{-Ah} v_0 - v_0)+\int_0^h e^{-A(h-\tau)}G(v(\tau)) \ud  \tau\big]+\int_0^t e^{-A(t-\tau)} [G(v(\tau+h))-G(v(\tau))] \ud \tau\nonumber\\
    =&e^{-At}(v(h)-v_0)+ \int_0^t e^{-A(t-\tau)} [G(v(\tau+h))-G(v(\tau))] \ud \tau \nonumber
  \end{align}
  Since $\|e^{-At}\|\leq 1$,
    \begin{align*}
    \|v(t+h)-v(t)\|
    \leq \|v(h)-v_0\| +  \int_0^t 2\|v(\tau+h)-v(\tau)\|\ud \tau.
  \end{align*}
  Then by Gronwall's inequality, we have
  \begin{equation}\label{vLip_0}
    \|v(t+h)-v(t)\|\leq \|v(h)-v_0\|e^{2t}.
  \end{equation}
On the other hand,
\begin{align}\label{At2}
 {v(h)-v_0} = (e^{-Ah}-I)v_0 +  \int_0^h e^{-A(h-\tau)} [G(v(\tau))-G(v_0)+G(v_0) ]\ud \tau.
\end{align}
Then from \eqref{GLip} and $\|e^{-At}\|\leq 1$ we know
\begin{equation}
  \|v(h)-v_0\|\leq h\|Av_0\|+ 2\int_0^h \|v(\tau)-v_0\| \ud \tau + 2h \|v_0\| = h(2\|v_0\|+\|Av_0\|) + 2 \int_0^h \|v(\tau)-v_0\| \ud \tau.
\end{equation}
%{\blue
%\begin{align*}
%  &\|v(h)-v_0\|\leq \|(e^{-Ah}-I)v_0\|+ 2\int_0^h \|v(\tau)-v_0\| \ud \tau + 2h \|v_0\| \\
%  =& 2h\|v_0\|+Ch^{1/2}\|A^{1/2}v_0\| + 2 \int_0^h \|v(\tau)-v_0\| \ud \tau.
%\end{align*}
%}
Thus Gronwall's inequality gives us
\begin{equation}
\|v(h)-v_0\|\leq h(2\|v_0\|+\|Av_0\|)e^{2h},
\end{equation}
which, together with \eqref{vLip_0}, leads to the Lipschitz continuity of $v(t)$
\begin{equation}\label{vLip}
\left\| \frac{v(t+h)-v(t)}{h} \right\|\leq 2 \|v_0\|_1 e^{2t+2h}.
\end{equation}
Then from \eqref{GLip} we concludes the Lipschitz continuity of $G(v(t))$
\begin{equation}\label{GLip_t}
\left\| \frac{G(v(t+h))-G(v(t))}{h} \right\|\leq  4 \|v_0\|_1 e^{2t+2h}.
\end{equation}
Moreover, from \eqref{tma3}
we know
   \begin{align}\label{At1}
    \frac{v(t+h)-v(t)}{h}
    =e^{-At}\frac{v(h)-v_0}{h}+ \int_0^t e^{-A(t-\tau)} \frac{G(v(\tau+h))-G(v(\tau))}{h} \ud \tau.
  \end{align}
On one hand, by the reflexibility of $L^2$ and generalized Rademacher's theorem [Evans-measure theory-th6.5??], there exists $g(t)\in L^1(0,T;L^2(\mathbb{R}))$ such that for a.e. $t\geq 0$,
\begin{equation}\label{gg}
\lim_{h\to 0}\frac{G(v(t+h))-G(v(t))}{h}= g(t),
 \end{equation}
 i.e. $g(t)=\pt_t G(v(t))$ which is the Fr\'echet derivative of $G$. Then by Lebesgue's dominated convergence theorem and \eqref{GLip_t}, we know the limit for the second term  on the right hand side of \eqref{At1}  exists.
On the other hand, from \eqref{At2},
\begin{align*}
  \frac{v(h)-v_0}{h} &= \frac{e^{-Ah}-I}{h}v_0 + \frac{1}{h} \int_0^h e^{-A(h-\tau)} [G(v(\tau))-G(v_0)+G(v_0)] \ud \tau\\
  & \to -Av_0+ G(v_0)
\end{align*}
due to continuity of $G$, so the first term on the right hand side of \eqref{At1} converges. Then by  Lebesgue's dominated convergence theorem and \eqref{vLip}, we know the limit for the second term  on the right hand side of \eqref{At1} also exists.
Therefore we know
\begin{equation}\label{v_time}
  \pt_t v(t)  = e^{-At}(-Av_0+ G(v_0)) + \int_0^t  e^{-A(t-\tau)} g(\tau) \ud \tau,
\end{equation}
which concludes $\pt_t v \in C([0,T]; L^2(\mathbb{R})).$ Plugging in the formula \eqref{gg}, \eqref{v_time} becomes
\begin{align*}
 \pt_t v (t)
 = & e^{-At}(-Av_0+ G(v_0)) + \int_0^t  e^{-A(t-\tau)} \pt_\tau G(v(\tau)) \ud \tau\\
 =& e^{-At}(-Av_0+ G(v_0)) + e^{-A(t-\tau)}G(v(\tau))|_0^t- A \int_0^t e^{-A(t-\tau)}G(v(\tau)) \ud \tau\\
 =& G(v(t)) - A e^{-At} v_0  - A \int_0^t e^{-A(t-\tau)}G(v(\tau)) \ud \tau\\
 = & -Av(t) + G(v(t))
\end{align*}
due to \eqref{mild}.
Then since $G(v(t))\in C([0,T]; L^2(\mathbb{R}))$ we have
\begin{equation}
  Av\in C([0,T]; L^2(\mathbb{R})).
\end{equation}

  Step 4. Higher order regularities.

Set $w_1=\pt_t v$ and $w_2=\pt_x v$. Then
$$\pt_t G(v)=G'(v)\pt_t v \in C([0,T]; L^2(\mathbb{R}))$$
and
$$\pt_x G(v)=(1-f'(\phi+v))\pt_x v -(f'(\phi+v)-f'(\phi))\pt_x \phi \in C([0,T]; L^2(\mathbb{R})).$$
Therefore we can repeat Step 1 and 2 for
\begin{equation}
  \pt_t w_1 + Aw_1 = G'(v)w_1
\end{equation}
and
\begin{equation}
  \pt_t w_2 +Aw_2 =(1- f'(v+\phi))w_2- (f'(\phi+v)-f'(\phi))\pt_x \phi
\end{equation}
to obtain
\begin{align*}
  &w_1, w_2 \in C((0,T];L^2(\mathbb{R}))\cap C((0,T]; H^1(\mathbb{R}))\\
  &\pt_t w_1 , \pt_t w_2  \in C((0,T]; L^2(\mathbb{R}))
\end{align*}
which concludes $v$ is a global classical solution to \eqref{Neqn} and satisfies \eqref{highreg}.

Step 5. \eqref{dissi} is directly from \eqref{Neqn} and above regularity properties. Then we have
$$\F(v(t))\leq \F(v_0).$$
\end{proof}

\section{Proof of key propositions in Section 3}\label{App_B}
\subsection{The spectrum $\sigma(L)\subset [-1, +\8)$}\label{app-s1}
\begin{lem}\label{B1}
  The operator $L$ in \eqref{L1} is densely defined, self-adjoint linear operator.
\end{lem}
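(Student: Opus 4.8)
The plan is to write $L = L_{1/2} + V$, where $L_{1/2} = \ptf$ is the fractional Laplacian on its natural domain $D(L_{1/2}) = H^1(\mathbb{R})$ and $V$ denotes multiplication by the bounded real function
$V(x) = \frac{x^2-1}{x^2+1} = 1 - \frac{2}{1+x^2}$, and then to invoke the Kato--Rellich stability theorem. The key point is that $V$ is a \emph{bounded} symmetric perturbation, hence $L_{1/2}$-bounded with relative bound zero, and therefore cannot destroy self-adjointness.

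First I would treat $L_{1/2}$. By the equivalence of the singular-integral definition \eqref{def-L} with the Fourier definition (see \cite{Ten}), $L_{1/2}$ is unitarily equivalent, via the Fourier transform $\mathcal{F}$, to multiplication by the real measurable symbol $|\xi|$ on $L^2(\mathbb{R})$. Multiplication by a real measurable function is self-adjoint on its maximal domain, so $L_{1/2}$ is self-adjoint with domain $D(L_{1/2}) = \{u \in L^2(\mathbb{R}) : |\xi|\hat{u} \in L^2(\mathbb{R})\} = H^1(\mathbb{R})$, consistent with $D(L_s) = H^{2s}$ at $s = \tfrac12$. Since $H^1(\mathbb{R})$ is dense in $L^2(\mathbb{R})$ and $L$ is defined on this same domain, $L$ is densely defined.

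Next I would record the elementary properties of $V$. From $0 \le \frac{2}{1+x^2} \le 2$ we get $|V(x)| \le 1$, so $\|Vu\| \le \|u\|$ for every $u \in L^2(\mathbb{R})$; thus $V$ is a bounded operator, and it is symmetric because $V$ is real-valued. In particular $V$ is $L_{1/2}$-bounded with relative bound $0 < 1$. Symmetry of $L$ itself is then immediate from Plancherel, $\la L_{1/2}u, w\ra = \int_{\mathbb{R}} |\xi|\hat{u}\,\overline{\hat{w}}\,\ud\xi = \la u, L_{1/2}w\ra$, together with the reality of $V$.

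Finally I would combine the pieces: since $L_{1/2}$ is self-adjoint and $V$ is symmetric and $L_{1/2}$-bounded with relative bound strictly less than $1$, the Kato--Rellich theorem (see \cite{reed-simon}) yields that $L = L_{1/2} + V$ is self-adjoint on $D(L_{1/2}) = H^1(\mathbb{R})$ and bounded below. I do not expect any genuine obstacle here; this is a textbook stability-of-self-adjointness argument. The only step that warrants care is the identification of the domain of $\ptf$ as $H^1(\mathbb{R})$, which relies on the equivalence of the two definitions of the fractional Laplacian already invoked in the paper, and the observation that the content of Kato--Rellich beyond mere symmetry is precisely the equality $D(L^*) = D(L)$, for which the boundedness of $V$ is what is being used.
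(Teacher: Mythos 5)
Your proof is correct, and it takes a genuinely different --- and in fact more complete --- route than the paper's. The paper's own proof of Lemma \ref{B1} is a direct two-line verification: density is immediate from $D(L)=H^1(\mathbb{R})\subset L^2(\mathbb{R})$, and then it computes, for $u,y\in H^1(\mathbb{R})$, $\la Lu, y\ra = \la \ptf u + q u, y\ra = \la u, \ptf y + q y\ra = \la u, Ly\ra$, using that $q(x)=\frac{x^2-1}{x^2+1}$ is real and bounded and that $\ptf$ pairs symmetrically. Strictly speaking, that computation establishes only that $L$ is \emph{symmetric}; for an unbounded operator, self-adjointness additionally requires the domain equality $D(L^*)=D(L)$, which the paper leaves implicit. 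Your decomposition $L=L_{1/2}+V$ supplies exactly this missing step: $L_{1/2}$ is self-adjoint on $H^1(\mathbb{R})$ because it is unitarily equivalent via the Fourier transform to multiplication by the real symbol $|\xi|$ on its maximal domain, and $V$ is a bounded symmetric multiplication operator, so Kato--Rellich yields self-adjointness of $L$ on the same domain. Two small remarks: first, you do not need the full strength of Kato--Rellich with relative bound strictly less than $1$, since a bounded perturbation has relative bound $0$ and the elementary identity $(A+B)^*=A^*+B$ for $B$ bounded symmetric already gives $D(L^*)=D(L_{1/2}^*)=H^1(\mathbb{R})=D(L)$; second, the one ingredient your argument genuinely relies on beyond the paper's is the Fourier identification $D(\ptf)=\{u\in L^2:\ |\xi|\hat u\in L^2\}=H^1(\mathbb{R})$, which is justified by the equivalence of definitions cited in the paper. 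What your approach buys is the honest conclusion ``self-adjoint'' rather than merely ``symmetric''; what the paper's approach buys is brevity, at the cost of conflating the two notions.
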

\begin{proof}
  First, it is obvious $L$ is linear operator and $D(L)=H^1$ is dense in $L^2$.

  Second, denote $q(x):=\frac{x^2 -1}{x^2+1}$ which is bounded function. For any $y,\, u\in H^1$,
  $$\la Lu, y \ra= \la \ptf u + q(x)u  , y \ra = \la u, \ptf y + q(x) y \ra= \la u, L y \ra.$$
  Hence $L$ is self-adjoint.
\end{proof}

\begin{lem}\label{B2}
Assume the operator $L$ is densely defined, self-adjoint linear operator, then
  $$\kerr(\lambda I - L)= \ran(\lambda I-L)^\bot.$$
\end{lem}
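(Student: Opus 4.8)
The plan is to reduce this to the standard orthogonality relation between the range of a densely defined operator and the kernel of its adjoint, and then to invoke the self-adjointness of $L$ (Lemma \ref{B1}) to identify the adjoint of $\lambda I - L$ with itself. Throughout, $\lambda$ is understood to be real, which is the only case relevant here since a self-adjoint operator has real spectrum; for complex $\lambda$ the adjoint would instead be $\bar\lambda I - L$ and the two sides of the claimed identity would differ by a conjugation.

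First I would establish the general identity $\ran(T)^\bot = \kerr(T^*)$ for an arbitrary densely defined linear operator $T$ on $L^2(\mathbb{R})$. This is purely a matter of unwinding definitions. A vector $y$ lies in $\ran(T)^\bot$ exactly when $\la Tx, y \ra = 0$ for every $x \in D(T)$. In that case the linear functional $x \mapsto \la Tx, y \ra$ is the zero functional, hence continuous on $D(T)$, so by the definition of the adjoint domain $y \in D(T^*)$; moreover $\la x, T^* y \ra = \la Tx, y \ra = 0$ for all $x\in D(T)$, and since $D(T)$ is dense this forces $T^* y = 0$, i.e. $y \in \kerr(T^*)$. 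Conversely, if $y \in \kerr(T^*)$ then $\la Tx, y \ra = \la x, T^* y \ra = 0$ for all $x\in D(T)$, so $y \perp \ran(T)$. Both inclusions are thus immediate.

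Next I would apply this identity with $T = \lambda I - L$. Since $\lambda I$ is bounded and self-adjoint for real $\lambda$, and $L$ is self-adjoint with $D(L) = H^1(\mathbb{R})$ by Lemma \ref{B1}, the operator $\lambda I - L$ is again self-adjoint on the same domain $D(L)$: adding a bounded self-adjoint operator preserves self-adjointness and leaves the domain unchanged. Hence $(\lambda I - L)^* = \lambda I - L$, and the general identity specializes to $\ran(\lambda I - L)^\bot = \kerr\big((\lambda I - L)^*\big) = \kerr(\lambda I - L)$, which is the desired equality. The proof carries no serious obstacle; the only point requiring attention is the reality of $\lambda$ and the resulting self-adjointness of the bounded perturbation $\lambda I - L$, both of which are routine.
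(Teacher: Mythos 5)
Your proof is correct, and it is in fact more complete than the paper's own. The paper's proof of Lemma \ref{B2} consists of the single computation $0 = \la (\lambda I - L)u, y \ra = \la u, (\lambda I - L)y \ra$ for $u \in \kerr(\lambda I - L)$ and $y \in D(L)$, which establishes only the easy inclusion $\kerr(\lambda I - L) \subseteq \ran(\lambda I - L)^\bot$; the reverse inclusion --- the one actually needed in Lemma \ref{lem2.1}, where the orthogonal complement of a proper range must produce a nonzero kernel element --- is left implicit. Your argument supplies exactly this missing half via the standard identity $\ran(T)^\bot = \kerr(T^*)$, with the functional-continuity step showing $y \in D(T^*)$. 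One point deserves a correction of emphasis rather than of substance: you dismiss complex $\lambda$ as irrelevant ``since a self-adjoint operator has real spectrum,'' but the paper invokes Lemma \ref{B2} precisely with $\lambda = a + bi$, $b \neq 0$, in step (1) of the proof of Lemma \ref{lem2.1}, in order to \emph{prove} that the spectrum is real --- so restricting to real $\lambda$ on those grounds would be circular. Fortunately your own parenthetical remark already contains the fix: for complex $\lambda$ the identity reads $\kerr(\bar{\lambda} I - L) = \ran(\lambda I - L)^\bot$, and since the lower bound \eqref{lowerbound} holds equally at $\bar{\lambda}$ (which has the same nonzero imaginary part), the contradiction argument in Lemma \ref{lem2.1} goes through unchanged. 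So the lemma as stated is literally accurate only for real $\lambda$, and your proof, together with your conjugation remark, covers everything the paper actually uses.
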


\begin{proof}
  For any $y\in D(\lambda I -L)$, any $u\in \kerr(\lambda I -L)$, we have
  $$0= \la (\lambda I - L)u, y  \ra =\la u, (\lambda I - L)y  \ra. $$
\end{proof}

\begin{lem}\label{closed}
  The operator $L$ in \eqref{L1} is closed.
\end{lem}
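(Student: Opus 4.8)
The plan is to exploit the splitting $L = L_0 + M_q$, where $L_0 := \ptf$ with domain $D(L_0) = H^1(\mathbb{R})$ and $M_q$ denotes multiplication by the potential $q(x) := \frac{x^2-1}{x^2+1}$, and then to invoke the elementary principle that the sum of a closed operator and an everywhere-defined bounded operator is again closed. Concretely, I would verify the two ingredients separately and then assemble them.

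First, $M_q$ is bounded on $L^2(\mathbb{R})$: since $|q(x)| \le 1$ for all $x$, we have $\|M_q u\| \le \|u\|$, so $M_q \in \mathcal{B}(L^2)$ with operator norm at most $1$. Second, $L_0 = \ptf$ is closed. The cleanest way is to pass to the Fourier side, where $L_0$ becomes multiplication by the real symbol $|\xi|$ with natural domain $\{u : |\xi|\hat u \in L^2\} = H^1(\mathbb{R}) = D(L)$. If $u_n \to u$ in $L^2$ and $L_0 u_n \to g$ in $L^2$, then by Plancherel $\hat u_n \to \hat u$ and $|\xi|\hat u_n \to \hat g$ in $L^2$; passing to a subsequence converging pointwise a.e.\ forces $\hat g(\xi) = |\xi|\hat u(\xi)$ a.e., whence $|\xi|\hat u \in L^2$, i.e.\ $u \in H^1 = D(L_0)$ and $L_0 u = g$. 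This shows $L_0$ is closed.

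Finally I would combine these. Suppose $u_n \to u$ and $L u_n \to g$ in $L^2$. Boundedness of $M_q$ gives $M_q u_n \to M_q u$, so $L_0 u_n = L u_n - M_q u_n \to g - M_q u$. Closedness of $L_0$ then yields $u \in D(L_0) = H^1 = D(L)$ and $L_0 u = g - M_q u$, that is $Lu = g$; hence $L$ is closed.

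The argument is essentially routine; the only point requiring a little care is the identification of the domain, namely that the singular-integral operator $\ptf$ of order one has domain exactly $H^1(\mathbb{R})$ and symbol $|\xi|$ so that the Fourier picture applies, which is precisely the equivalence between the singular-integral and Fourier definitions recorded after \eqref{def-L}. I would deliberately avoid shortcutting the proof by quoting self-adjointness of $L$, since the computation in Lemma \ref{B1} establishes only symmetry $L \subseteq L^*$ and not the domain equality $D(L^*) = D(L)$ needed for genuine self-adjointness; the decomposition route above is self-contained and sidesteps that gap.
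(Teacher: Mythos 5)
Your proof is correct, and it takes a genuinely different route from the paper's. The paper argues by duality and density: for a test function $\varphi\in D(L)$ it moves the operator across the inner product, $\la \varphi, Lu_n\ra=\la L\varphi,u_n\ra\to\la L\varphi,u\ra$, and then concludes $y=Lu$ by density of $D(L)$. That argument rests on the symmetry of $L$, and its final equality $\la L\varphi,u\ra=\la \varphi,Lu\ra$ tacitly presumes $u\in D(L)=H^1(\mathbb{R})$ --- precisely the membership a closedness proof must establish. Read strictly, the paper's computation shows only that $u$ lies in the domain of the adjoint with $L^*u=y$; upgrading this to $u\in H^1(\mathbb{R})$ and $Lu=y$ requires identifying the maximal (distributional) domain of $\ptf+q$ in $L^2$ with $H^1(\mathbb{R})$, which is not spelled out there. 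Your decomposition $L=L_0+M_q$ supplies exactly this missing ingredient: on the Fourier side $L_0=\ptf$ is multiplication by $|\xi|$ with maximal domain $\{u:|\xi|\hat u\in L^2\}=H^1(\mathbb{R})$, so closedness of $L_0$ comes with an explicit domain identification (your a.e.-subsequence step is fine; alternatively one can simply test $|\xi|\hat u_n\to\hat g$ against compactly supported functions), and the bounded symmetric perturbation $M_q$ with $|q|\le 1$ cannot disturb closedness. Your route thus uses only Plancherel and the elementary principle that a closed operator plus an everywhere-defined bounded operator is closed, is fully self-contained, and in fact repairs the gap you correctly flag at the end: since Lemma \ref{B1} verifies only symmetry of $L$, the paper's duality argument cannot by itself settle the domain question, whereas your multiplier argument also delivers self-adjointness of $\ptf$ on $H^1(\mathbb{R})$ (hence of $L$, by bounded symmetric perturbation) as a byproduct.
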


\begin{proof}
  Assume we have $u_n \to u$ in $D(L)$ and $y_n := L u_n \to y$ in $L^2.$
  Since $D(L)$ is dense in $L^2$, we first choose any test function $\varphi\in D(L)$. Then
  \begin{align*}
   \la \varphi, y \ra \leftarrow\la \varphi, y_n \ra =\la \varphi, L u_n \ra = \la L \varphi, u_n \ra \to \la L \varphi, u\ra= \la  \varphi, L u\ra,
  \end{align*}
  and dense argument shows that $y=L u.$ Hence $L$ is closed.
\end{proof}

\begin{defn}
  Let $T$ be a closed operator on the Hilbert space $X$. A complex number $\lambda$ is in the resolvent set $\rho(T)$ if $\lambda I -T$ is bijection of $D(T)$ onto $X$ with a bounded inverse. If $\lambda\in \rho(T)$, $R_\lambda(T)=(\lambda I -T)^{-1}$ is called the resolvent of $T$ ar $\lambda.$
\end{defn}

\begin{rem}
  If $\lambda I- T$ is a bijection of $D(T)$ onto $X$, by the closed-graph theorem, its inverse is automatically bounded.
\end{rem}

\begin{lem}\label{B3}
  For linear operator $L$ in \eqref{L1} and  any $\lambda\in \mathbb{C}\backslash [-1,+\8)$, the range $\ran(\lambda I-L)$ is closed.
\end{lem}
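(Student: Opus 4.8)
The plan is to deduce closedness of the range from a uniform coercive estimate
$$
\|(\lambda I - L) u\| \ge c_\lambda \|u\| \qquad \text{for all } u \in D(L),
$$
with a strictly positive constant $c_\lambda$ depending only on $\lambda$. Once such an estimate is available, closedness follows by a routine argument: if $(\lambda I - L)u_n \to y$ in $L^2$, then $\|(\lambda I - L)(u_n - u_m)\| \ge c_\lambda \|u_n - u_m\|$ forces $\{u_n\}$ to be Cauchy, hence $u_n \to u$ for some $u \in L^2$; since $L$ is closed (Lemma \ref{closed}), $u \in D(L)$ and $(\lambda I - L)u = y$, so $y \in \ran(\lambda I - L)$.

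To produce $c_\lambda$ I would first locate the numerical range of $L$. Because $L$ is self-adjoint (Lemma \ref{B1}), $\la L u, u\ra$ is real for every $u \in D(L)$, and using the nonnegativity of the fractional Laplacian quadratic form together with the explicit bound $q(x) = \frac{x^2-1}{x^2+1} \ge -1$,
$$
\la L u, u \ra = \la \ptf u, u\ra + \int_{\mathbb{R}} q(x) |u|^2 \ud x \ge -\|u\|^2 .
$$
Hence the numerical range $\{\la Lu, u\ra : u \in D(L),\ \|u\| = 1\}$ is contained in $[-1, +\8)$.

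Now fix $\lambda \in \mathbb{C}\backslash[-1,+\8)$ and set $d := \operatorname{dist}(\lambda, [-1,+\8)) > 0$. For any unit vector $u \in D(L)$, writing $z := \la Lu, u\ra \in [-1,+\8)$ and applying Cauchy--Schwarz,
$$
\|(\lambda I - L)u\| \ge |\la (\lambda I - L)u, u\ra| = |\lambda - z| \ge d ,
$$
and by homogeneity $\|(\lambda I - L)u\| \ge d\|u\|$ for all $u \in D(L)$; thus $c_\lambda = d$ works. Concretely, $d$ equals $|\operatorname{Im}\lambda|$ when $\lambda$ is non-real and $-1-\lambda$ when $\lambda$ is real with $\lambda < -1$, so both regimes are handled simultaneously by the single distance estimate.

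I expect no serious obstacle here: the argument is entirely soft, relying only on self-adjointness, closedness of $L$, and the pointwise lower bound $q \ge -1$. The one point that deserves a little care is checking that the numerical-range estimate is uniform in $u$ (so that the constant $c_\lambda$ is independent of the test function) and that closedness of $L$ is correctly invoked when passing to the limit in the Cauchy sequence; both become immediate once the coercive estimate is in place.
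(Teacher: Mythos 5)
Your proposal is correct and is essentially the paper's proof: both establish the coercive bound $\|(\lambda I - L)u\| \ge c_\lambda \|u\|$ on $D(L)$ and then conclude closedness of $\ran(\lambda I - L)$ by a Cauchy-sequence argument invoking the closedness of $L$ (Lemma \ref{closed}), the only difference being that the paper derives the bound by two separate expansions of $\|(\lambda I - L)u\|^2$ (one for $\operatorname{Im}\lambda \neq 0$ giving the constant $|\operatorname{Im}\lambda|$, one for real $\lambda < -1$ giving $|\lambda+1|$), while you obtain both cases at once from the inclusion of the numerical range in $[-1,+\8)$ together with Cauchy--Schwarz. One cosmetic slip: for non-real $\lambda$ with $\operatorname{Re}\lambda < -1$ the distance $d = \operatorname{dist}(\lambda,[-1,+\8))$ is strictly larger than $|\operatorname{Im}\lambda|$, not equal to it, but since the argument only needs $d > 0$ this does not affect the proof.
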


\begin{proof}
 Notice the lower bound for potential $q(x):=\frac{x^2 -1}{x^2+1}$ is $-1$.
 For $\lambda=a+ b i$  with $b\ne 0$, we have
\begin{equation}\label{lowerbound}
     \|(\lambda I - L)u\|^2= \|b u\|^2+ \|(aI-L)u\|^2 \geq  b^2 \|u\|^2
 \end{equation}
For $\lambda <-1$, we have
\begin{equation}\label{lowerbound1}
     \|(\lambda I - L)u\|^2 =  \|(\lambda+1)u\|^2 - 2(\lambda+1)\la (L+I)u, u \ra +\|(L+I)u\|^2\\
\geq  (\lambda+1)^{2} \|u\|^2,
\end{equation}
where we used $\la (L+I)u, u \ra\geq 0$ and $\lambda+1 <0$.

Now, we show that $\ran(\lambda I-L)$ is closed. For any $y_n \in \ran(\lambda I-L)$ with $y_n=(\lambda I-L)u_n$, if $y_n\to y$, from the lower bound estimate in \eqref{lowerbound} and \eqref{lowerbound1},  $u_n \to u$. Therefore from Lemma \ref{closed} we know $y=(\lambda I -L) u$. Thus $\ran(\lambda I-L)$ is closed.
\end{proof}

\begin{lem}\label{lem2.1}
  For linear operator $L$ in \eqref{L1}, $\sigma(L)=\sigma_p(L)\cup\sigma_c(L)\subset [-1, +\8)$.
\end{lem}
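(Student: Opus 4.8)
The plan is to establish two facts from which the statement follows at once via the spectral decomposition $\mathbb{C}=\rho(L)\cup\sigma_p(L)\cup\sigma_c(L)\cup\sigma_r(L)$: first, that every $\lambda\in\mathbb{C}\setminus[-1,+\8)$ belongs to the resolvent set $\rho(L)$, so that $\sigma(L)\subset[-1,+\8)$; and second, that the residual spectrum $\sigma_r(L)$ is empty. The ingredients are the self-adjointness of $L$ (Lemma \ref{B1}), the lower-bound estimates \eqref{lowerbound} and \eqref{lowerbound1} together with the closed-range conclusion of Lemma \ref{B3}, and the orthogonality relation of Lemma \ref{B2}.

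For the resolvent inclusion, fix $\lambda\in\mathbb{C}\setminus[-1,+\8)$. The estimates \eqref{lowerbound} and \eqref{lowerbound1} show that $\lambda I-L$ is bounded below by a positive constant, hence injective, and by Lemma \ref{B3} the range $\ran(\lambda I-L)$ is closed. It then remains to show the range is dense, i.e. $\ran(\lambda I-L)^\bot=\{0\}$. Here I would invoke self-adjointness of $L$ to write $\ran(\lambda I-L)^\bot=\kerr(\overline{\lambda}I-L)$; since complex conjugation maps the half-line $[-1,+\8)$ to itself and therefore its complement to itself, $\overline{\lambda}$ again lies outside $[-1,+\8)$, so the same lower bound applied to $\overline{\lambda}I-L$ forces $\kerr(\overline{\lambda}I-L)=\{0\}$. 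Thus $\ran(\lambda I-L)$ is both dense and closed, hence equal to $L^2(\mathbb{R})$, and the lower bound yields the boundedness of the inverse (alternatively via the closed-graph theorem, using that $L$ is closed by Lemma \ref{closed}). Therefore $\lambda\in\rho(L)$, and $\sigma(L)\subset[-1,+\8)$.

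For the emptiness of $\sigma_r(L)$, note that any residual-spectrum point lies in $\sigma(L)\subset[-1,+\8)$ and is hence real. Suppose toward a contradiction that $\lambda\in\sigma_r(L)$: then $\lambda I-L$ is injective but $\ran(\lambda I-L)$ is not dense, so $\ran(\lambda I-L)^\bot\neq\{0\}$. Since $\lambda$ is real, Lemma \ref{B2} identifies this orthogonal complement with $\kerr(\lambda I-L)$, which is then nontrivial---contradicting injectivity. Hence $\sigma_r(L)=\emptyset$, and combining the two facts gives $\sigma(L)=\sigma_p(L)\cup\sigma_c(L)\subset[-1,+\8)$.

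I expect the only delicate step to be the identification of $\ran(\lambda I-L)^\bot$ with a kernel of the conjugated operator: one must use self-adjointness to pass from $\lambda$ to $\overline{\lambda}$ and then verify that $\overline{\lambda}$ still avoids $[-1,+\8)$ so that the lower bound of Lemma \ref{B3} reapplies. The remaining arguments are routine, resting entirely on estimates already recorded in the proof of Lemma \ref{B3} and the standard fact that a closed operator which is bounded below with dense range is boundedly invertible.
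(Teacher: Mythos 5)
Your proposal is correct and follows essentially the same route as the paper's proof: lower bounds \eqref{lowerbound} and \eqref{lowerbound1} for injectivity and bounded invertibility, closed range from Lemma \ref{B3}, density of the range via the kernel--range orthogonality of Lemma \ref{B2} with self-adjointness from Lemma \ref{B1}, and emptiness of $\sigma_r(L)$ by the same orthogonality identity. If anything you are slightly more careful than the paper, which writes $\kerr(\lambda I-L)=\ran(\lambda I-L)^\bot$ without passing to $\overline{\lambda}$ (your observation that $\overline{\lambda}$ also avoids $[-1,+\8)$, so the lower bound applies to $\overline{\lambda}I-L$, makes that step airtight), and your direct proof that real $\lambda<-1$ lies in $\rho(L)$ subsumes the paper's separate exclusions of such $\lambda$ from $\sigma_p(L)$ and $\sigma_c(L)$ in its steps (3) and (4).
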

\begin{proof}

  (1) For self-adjoint operator, the spectrum locates on the real line. Indeed, for any $b\neq 0$, \eqref{lowerbound}
   implies there is a lower bound for $\lambda I - L$. To obtain $\lambda \in \rho(L),$ it remains to prove $\lambda I - L$ is onto.
   If it is not onto and we assume ${\ran(\lambda I -L )} \neq L^2 $. Then by Lemma \ref{B1}, \ref{B2} and \ref{B3}, $\text{Ker}(\lambda I - L)= {\ran(\lambda I -L )}^\bot=\overline{{\ran(\lambda I -L )}}^\bot$ is not empty, which means there exists $u^*\neq 0$ such that $(\lambda I - L)u^*=0$ and it contradicts with \eqref{lowerbound}.
     \\
     (2) For self-adjoint operator, the residual spectrum is empty. Indeed, if $\lambda\in \sigma_r$, we concludes a contradiction from the fact
     $$\kerr(\lambda I - L)= \ran(\lambda I-L)^\bot\supseteq \overline{\ran(\lambda I-L)}^\bot\neq \emptyset.$$
  (3) $\sigma_p \subset [-1, +\8).$ Otherwise, if $\lambda_p=a<-1$ is a point spectrum, then \eqref{lowerbound1} leads to an contradiction.
  \\
  (4) $\sigma_c \subset [-1, +\8).$ Otherwise, if $\lambda_c=a<-1$ is a continuous spectrum, then
  $$L^2 = \overline{\ran(\lambda I -L)}= \ran(\lambda I -L),$$
  which contradicts with the definition of continuous spectrum.
  %for any $y\in L^2,$ there exists $u_n \in L^2$ such that $(L-aI)u_n \to y$. From
%  \begin{equation}\label{liminf}
%  (-1-a)\|u_n\|^2\leq \la u_n, (L-aI)u_n\ra \leq \|u_n\|\|(L-aI)u_n\|,
%  \end{equation}
%  we know $u_n$ is bounded in $L^2$ and $u_n \rightharpoonup u \in L^2.$
%  Taking the $\liminf$ in \eqref{liminf},
%  $$(-1-a)\|u\|^2\leq \la u, y\ra.$$
%  By the uniqueness of weak limit, $y=(L-aI)u$ so $L-aI$ is onto and the above estimate leads to a lower bound for $\lambda I - L$ and $\lambda \in \rho(L).$
\end{proof}

\subsection{The continuous spectrum $\sigma_c(L)\subset [1, +\8)$ }\label{app-s2}
It worth to mention that the proof relies only on the lower and upper bound of the potential $f'(\phi)= \frac{x^2 -1}{x^2+1}$, which  are $-1$ and $1$ separately.
%With slightly changes for Fourier's transformation of $(-\pt)_{xx}^{1/2}$ and for $H^1\hookrightarrow L^2$ compactly,  the proofs for the continuous spectrum are similar to the local version.

\begin{lem}\label{lem-sc}
  For linear operator $L$ in \eqref{L1}, $\sigma_c(L)\subset [1, +\8)$. Besides, for the linear operator $A$ in \eqref{A2.3}, $\sigma(A)=[1,+\8).$
\end{lem}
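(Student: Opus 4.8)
The plan is to treat the two operators separately, in each case reducing everything to the asymptotic value $1$ of the potential. For $A = L_{1/2} + I$ the cleanest route is to diagonalize by the Fourier transform: $A$ is unitarily equivalent to multiplication by the symbol $m(\xi) = |\xi| + 1$ on $L^2(\mathbb{R})$, and since the spectrum of a multiplication operator equals the essential range of its symbol, we read off $\sigma(A) = [1, +\8)$ at once. Equivalently, $\la A u, u\ra = \la \ptf u, u\ra + \|u\|^2 \ge \|u\|^2$ forces $\sigma(A) \subset [1,+\8)$, while for each $\lambda \ge 1$ the modulated bumps $u_n(x) = n^{-1/2} e^{i\xi_0 x}\psi(x/n)$ with $|\xi_0| + 1 = \lambda$ form a Weyl sequence exhibiting $\lambda \in \sigma(A)$.

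For $L$ the key algebraic observation is that $q(x) = \frac{x^2-1}{x^2+1} = 1 - \frac{2}{1+x^2}$, so that
\begin{equation*}
L = \ptf + I + V, \qquad V(x) := -\frac{2}{1+x^2},
\end{equation*}
where $V$ is bounded and decays to $0$ at $\pm\8$. Thus $V$ is only a localized perturbation of $A$, and the continuous spectrum should be governed solely by the asymptotic value $1$. To prove $\sigma_c(L) \subset [1,+\8)$ I would argue by contradiction via Weyl's criterion: if some $\lambda < 1$ lay in $\sigma_c(L)$ (more generally in $\sigma_{ess}(L)$), there would be a singular sequence $\{u_n\}$ with $\|u_n\| = 1$, $u_n \rightharpoonup 0$ weakly in $L^2(\mathbb{R})$, and $(L-\lambda)u_n \to 0$ in $L^2$. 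Expanding the quadratic form,
\begin{equation*}
\la (L-\lambda)u_n, u_n\ra = \la \ptf u_n, u_n\ra + (1-\lambda)\|u_n\|^2 + \la V u_n, u_n\ra,
\end{equation*}
the left side tends to $0$, the first term on the right is nonnegative, and $(1-\lambda)\|u_n\|^2 = 1-\lambda > 0$.

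The crux, and the step I expect to be the main obstacle, is to show $\la V u_n, u_n\ra \to 0$. First I would note that $\la \ptf u_n, u_n\ra$ stays bounded: from $\|(L-\lambda)u_n\| \le C$, the estimate $\la(L-\lambda)u_n,u_n\ra \ge \la \ptf u_n, u_n\ra + (\min q - \lambda)\|u_n\|^2$ together with $q \ge -1$ bounds $\|u_n\|_{\dot{H}^{1/2}}^2 = \la \ptf u_n, u_n\ra$, so $\{u_n\}$ is bounded in $H^{1/2}(\mathbb{R})$. Since $|V|^{1/2}$ decays at infinity and $H^{1/2}$ embeds compactly into $L^2_{\loc}(\mathbb{R})$, the map $u \mapsto |V|^{1/2} u$ carries the weakly null, $H^{1/2}$-bounded sequence $\{u_n\}$ to a sequence converging strongly to $0$ in $L^2$; hence $\la V u_n, u_n\ra = -\big\||V|^{1/2} u_n\big\|^2 \to 0$. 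Plugging this back gives $0 \ge (1-\lambda) + o(1) > 0$, a contradiction, so $\sigma_{ess}(L) \subset [1,+\8)$, and since $\sigma_c(L) \subset \sigma_{ess}(L)$ the claim follows. Alternatively one may simply invoke Weyl's theorem on stability of the essential spectrum, noting that $V$ is relatively compact with respect to $A$ (the decay of $V$ in $x$ combined with the frequency decay of $(A+I)^{-1}$), whence $\sigma_{ess}(L) = \sigma_{ess}(A) = [1,+\8)$; I would nonetheless favor the self-contained Weyl-sequence argument, since it uses only the lower and upper bounds of the potential, matching the elementary spirit of the paper.
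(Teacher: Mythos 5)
Your proposal is correct, but it takes a genuinely different route from the paper. The paper invokes the Reed--Simon perturbation theorem (stability of the essential spectrum under relatively compact perturbations), taking $A=\ptf+I$ and $C=q-I$ with $q(x)=\frac{x^2-1}{x^2+1}$, and verifies by hand that $C(A+i)^{-1}$ is compact by splitting into a far-field piece (using that $q-1=-\frac{2}{1+x^2}$ is small in $L^2(|x|>R)$) and a local piece (an $H^1$ bound on $(A+i)^{-1}u_j$ from the symbol $(1+i+|\xi|)^{-1}$ plus Rellich compactness); it then shows $\sigma(A)=[1,+\8)$ by combining the semiboundedness estimate with an explicit non-surjectivity argument, exhibiting a modulated Gaussian $f$ for which $\hat f(\xi)/(\lambda-|\xi|)\notin L^2$. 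You instead diagonalize $A$ by the Fourier transform (spectrum of multiplication by $|\xi|+1$ equals its essential range $[1,+\8)$), which is cleaner and subsumes the paper's two-step argument for $A$, and for $L$ you run a self-contained Weyl singular-sequence contradiction: the $H^{1/2}$ bound on the sequence extracted from the quadratic form with $q\geq-1$, fractional Rellich compactness on bounded intervals, and the decay of $V=-\frac{2}{1+x^2}$ together force $\la Vu_n,u_n\ra\to 0$, yielding $0\geq(1-\lambda)>0$. This is essentially a form-theoretic re-proof of the relative compactness the paper cites, and it buys a more elementary, quotable argument; the trade-off is that it delivers only the inclusion $\sigma_{ess}(L)\subset[1,+\8)$ rather than the paper's equality $\sigma_{ess}(L)=\sigma_{ess}(A)=[1,+\8)$ --- but the inclusion is all the lemma requires, and your sketched alternative is precisely the paper's route. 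Two standard facts you use implicitly deserve a word in a write-up: self-adjointness of $L$ (the paper's Lemma \ref{B1}), which underwrites both Weyl's criterion and the containment $\sigma_c(L)\subset\sigma_{ess}(L)$ (isolated spectral points of a self-adjoint operator are eigenvalues); and note that, despite the paper's remark, both proofs genuinely use the decay of $q$ to its asymptotic value $1$, not merely the bounds $-1\leq q\leq 1$.
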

\begin{proof}
  Recall the perturbation theorem for spectrum in \cite[Theorem XIII 14 and Corollary 2]{reed-simon} .
  \begin{center}
  \textit{
    Let $A$ be a self-adjoint operator and let $C$ be a relatively compact perturbation of $A$. Then $L:= A+C$ has the same essential spectrum with $A$.
    }
  \end{center}
  In our case, notice the upper bound for potential $q(x):=\frac{x^2 -1}{x^2+1}$ is $1$. Taking $A=\ptf + I$ and $C:= v(x)-I,$ we will first prove
  $C$ is a relatively compact perturbation of $A$, i.e. $C(A+i)^{-1}$ is compact, and then we prove $\sigma(A)=[1,+\8).$

  (1) First we prove $C(A+i)^{-1}$ is compact. Assume $u_j\in L^2$ satisfying $\|u_j\|\leq M$ for any $j$. Denote $w_j := (A+i)^{-1} u_j = ((1+i)I+\ptf )^{-1}u_j$. We want to prove for any $\eps>0$ there exist $J$ and a subsequence (still denoted as $j$) such that for any $j\geq J$ and $\ell\geq 0$, $(q(x)-1) (w_j-w_{j+\ell})$ are Cauchy sequence in $L^2$.
  \\
  (1.a) For $\eps>0$, $n:=[1/\eps]$, there exists $R_n$, such that for any $|x|>R_n$,
  \begin{equation}
    \int_{|x|>R_n} (q-1)(w_j - w_{j+\ell}) \ud x \leq \|q-1\|_{L^2(|x|>R_n)} \cdot \|w_j - w_{j+\ell}\|_{L^2(|x|>R_n)}\leq \frac{\eps}{2}.
  \end{equation}
  (1.b) For $|x|\leq R_n$, we claim $w_j= ((1+i)I+\ptf )^{-1}u_j$ is bounded in $H^1(|x|\leq R_n)$. Indeed from $((1+i)I+\ptf ) w_j= u_j$ and Fourier's transform, we know
   $$\hat{w_j}(\xi)= \frac{\hat{u}_j(\xi)}{1+i+|\xi|}.$$
   Then by Parserval's identity
   \begin{align*}
     \|w_j\|_{H^1}^2 &=\|w_j\|^2 +  \|w_j'\|^2 = \|\hat{w}_j\|^2 +  \||\xi|\hat{w}_j\|^2\\
     &= c\int \frac{1+|\xi|^2}{1+(1+|\xi|)^2} \hat{u}_j^2(\xi) \ud \xi \leq c \|u_j\|^2.
   \end{align*}
   Since $H^1(|x|\leq R_n)\hookrightarrow L^2(|x|\leq R_n)$ compactly, we obtain a subsequence (still denoted as $w_j$) of $w_j$ which strongly converges in $L^2(|x|\leq R_n)$ and $\|(q(x)-1)(w_j-w_{j+\ell})\|_{L^2(|x|\leq R_n)}\leq \frac{\eps}{2}.$
  \\
  Combining (1.a) and (1.b) gives a Cauchy sequence in $L^2$ and we conclude $C(A+i)^{-1}$ is compact.

  (2) We turn to prove $\sigma(A)=[1,+\8).$ First notice the lower bound now is $1$, so by Lemma \ref{lem2.1} $\sigma(A)\subset [1,+\8).$ It remains to prove $[0,+\8)\subset \sigma(\ptf )$ due to $A$ is a shift of $\ptf $ with constant $1$. For any $\lambda \geq 0$, we will prove $\text{Ran}(\lambda I -\ptf)\neq L^2.$ Set $f:= e^{i \xi_0 x} N(0,1)$ where $N(0,1)$ is the normal distribution. Then $f\in L^2$ and $\hat{f}(\xi)=N(\xi_0, 1)$.
  Then by Fourier's transformation, if there exists a solution to $(\lambda I -\ptf)u=f$, then $\hat{u}(\xi)= \frac{\hat{f}(\xi)}{\lambda -|\xi|}$.
  Therefore $u$ is the inverse Fourier's transform of $\frac{N(\xi_0, 1)}{\lambda-|\xi|}$ which is not integrable. Thus we have $[1, +\8)\subset \sigma(A)\subset [1,+\8)$ and $\sigma_{ess}(A)=\sigma(A)=[1,+\8).$

  Finally we conclude $\sigma_c(L)\subset \sigma_{ess}(L)=\sigma_{ess}(A)=[1,+\8).$
\end{proof}

%\subsection{The point spectrum $\sigma_p(L)\subset [0, +\8)$ and $0$ is simple eigenvalue }\label{app-s3}

% {\blue
%\begin{rem}
%  We have proved $\sigma_{ess}(L)=[1,\8)$. It means $\sigma_{discrete}=[-1,1)=\sigma_p$, however is there any accumulation point for $1$ ? How we prove
%  $\sigma_p(L)=[-1,1)$ or $[-1,1]?$
%
%  It shall be $\sigma_p(L)=[-1,1)$?? In [p13, Frank's Review, 2010], he gave the result without proof
%  \begin{equation}
%    \inf_{\|u\|=1} \la L u, u \ra < \|u\|^2,
%  \end{equation}
%  where we used $\tilde{v}:=v(x)-1=\frac{x^2-1}{x^2+1}-1=\frac{-2}{x^2+1}$ is integrable and $\int_{\mathbb{R}} \tilde{v} \ud x<0.$
%\end{rem}
%}

\end{document}